\renewcommand{\L}{\mathbb L}
\newcommand{\N}{\mathbb N}
\newcommand{\Z}{\mathbb Z}
\newcommand{\R}{\mathbb R}
\newcommand{\cov}{\mathrm{Cov}}
\newcommand{\E}{\mathbb E}
\renewcommand{\P}{\mathbb P}
\newcommand{\var}{\mathrm{Var}}
\newcommand{\1}{\mathbbm 1}
\newcommand{\rmd}{\mathrm d}
\newcommand{\rme}{\mathrm e}
\newcommand{\rmi}{\mathrm i}
\newcommand{\bGamma}{\mathbf{\Gamma}}
\newcommand{\bPhi}{\mathbf{\Phi}}
\newcommand{\bSigma}{\mathbf{\Sigma}}
\newcommand{\bTheta}{\mathbf{\Theta}}
\newcommand{\bOmega}{\mathbf{\Omega}}
\newcommand{\bLambda}{\mathbf{\Lambda}}
\newcommand{\bepsilon}{\mbox{\boldmath$\varepsilon$}}
\newcommand{\bd}{\mathbf{d}}
\newcommand{\brho}{\mbox{\boldmath$\rho$}}
\newcommand{\bphi}{\mbox{\boldmath$\phi$}}
\newcommand{\bA}{\mathbf{A}}
\newcommand{\bB}{\mathbf{B}}
\newcommand{\bD}{\mathbf{D}}
\newcommand{\bG}{\mathbf{G}}
\newcommand{\bI}{\mathbf{I}}
\newcommand{\bM}{\mathbf{M}}
\newcommand{\bW}{\mathbf{W}}
\newcommand{\bX}{\mathbf{X}}
\newcommand{\bpm}{\left[\begin{matrix}}
\newcommand{\epm}{\end{matrix}\right]}
\renewcommand{\dim}{p}
\newcommand{\atan}{\mbox{$\mathrm{atan}$}}
\newcommand{\sign}{\mathrm{sign}}
\renewcommand{\leq}{\leqslant}
\renewcommand{\geq}{\geqslant}
\renewcommand{\epsilon}{\varepsilon}
\renewcommand{\hat}{\widehat}
\renewcommand{\tilde}{\widetilde}
\newcommand{\diag}[1]{\mathrm{Diag}\left(#1\right)}
\newcommand{\abs}[1]{\left\lvert#1\right\rvert}
\newcommand{\norm}[1]{\left\lVert#1\right\rVert}
\newcommand{\bff}{\mbox{${\boldsymbol{f}}$}}
\newcommand{\BigO}{\mbox{O}}
\DeclareMathOperator{\Vect}{vec}
\newcommand{\vect}[1]{\Vect\!\left( #1 \right)}
\def\bdexp{\mbox{\footnotesize $\bd$}}
\def\bGexp{\mbox{\footnotesize $\bG$}}
\def\GIGd{\bG\centerdot\mathcal I^{\bdexp}\centerdot \bG}
\def\GIGG{\bG\centerdot\mathcal I^{\bGexp}\centerdot \bG}
\newcommand{\pushright}[1]{\ifmeasuring@#1\else\omit\hfill$\displaystyle#1$\fi\ignorespaces}
\newcommand{\pushleft}[1]{\ifmeasuring@#1\else\omit$\displaystyle#1$\hfill\fi\ignorespaces}
\newcommand{\setlabel}[3]{%
  \phantomsection
  #1\def\@currentlabel{\unexpanded{#2}}\label{#3}%
}
\DeclareMathOperator*{\argmin}{argmin}
\theoremstyle{plain}
\newtheorem{theorem}{Theorem}
\newtheorem{proposition}[theorem]{Proposition}
\newtheorem{definition}[theorem]{Definition}
\newtheorem{corollary}[theorem]{Corollary}
\newtheorem{lemma}[theorem]{Lemma}
\theoremstyle{remark}
\begin{document}

\mathtoolsset{showonlyrefs}

\newcolumntype{d}[1]{D{.}{.}{#1} }

\title{Local Whittle estimation with (quasi-)analytic wavelets}

\author{Sophie Achard} 
\affil{Univ. Grenoble Alpes, CNRS, Inria, Grenoble INP, LJK, France}

\author{Ir\`ene Gannaz}
\affil{Univ. Grenoble Alpes, CNRS, Grenoble INP\footnote{Institute of Engineering Univ. Grenoble Alpes}, G-SCOP, 38000 Grenoble, France}

\date{August 2023}

\maketitle

\begin{abstract}
In the general setting of long-memory multivariate time series, the long-memory characteristics are defined by two components. The long-memory parameters describe the autocorrelation of each time series. And the long-run covariance measures the coupling between time series, with general phase parameters. It is of interest to estimate the long-memory, long-run covariance and general phase parameters of time series generated by this wide class of models although they are not necessarily Gaussian nor stationary. This estimation is thus not directly possible using real wavelets decomposition or Fourier analysis. Our purpose is to define an inference approach based on a representation using quasi-analytic wavelets. We first show that the covariance of the wavelet coefficients provides an adequate estimator of the covariance structure including the phase term. Consistent estimators based on a local Whittle approximation are then proposed. Simulations highlight a satisfactory behavior of the estimation on finite samples on linear time series and on multivariate fractional Brownian motions. An application on a real neuroscience dataset is presented, where long-memory and brain connectivity are inferred.
\end{abstract}

\textbf{Keywords.} Multivariate processes, long-memory, covariance, phase, wavelets, cerebral connectivity

\vspace{\baselineskip}

\setlength{\parindent}{0pt}
\setlength{\parskip}{\baselineskip}
\onehalfspacing

\section{Introduction}
\label{sec:model}

Multivariate processes are often observed nowadays thanks to the recordings of multiple sensors simultaneously. Many examples can be cited such as hydrology \citep{WhitcherJensen00}, finance \citep{Gencay} or neuroscience \citep{AchardGannaz}. When in addition the time series have the property of long-memory, the definition of the model is complicated and several definitions can be proposed. Some approaches proposed a simple definition, where the covariance matrix is real \citep{Lobato99, Shimotsu07}. However, this simple model is not able to address any multivariate models. For example, in \citep{Lobato97} two models were introduced, FIVARMA and VARFIMA, from this approach. Long-memory models with a complex covariance matrix give a solution to overcome this problem \citep{KechagiasPipiras20,Baek2020}. Following these modelings, the long-memory model studied in this paper admits a complex long-run covariance matrix, where a phase-term is added to the covariance structure.

\parbox{\textwidth}{Let ${\bX=\{\bX(t),\,t\in\Z\}}$ denote a multivariate long-memory dependence process $\bX(t)=\bpm X_1(t)&\dots&X_{\dim}(t)\epm^T$, $t\in\Z$, $p\in\N$, $p\geq 1$, with long memory parameters $\bd=(d_1,d_2,\ldots,d_p)$, $\bd\in(-0.5,+\infty)^p$. The exponent $T$ is the transpose operator. We will denote by $\L$ the backward lag operator, ${(1-\L)\bX(t)=\bX(t)-\bX(t-1)}$. The $k$-th difference operator, ${(1-\L)^k}$, $k\in\N$, is defined by $k$ recursive applications of ${(1-\L)}$. For $\bD=\lfloor \bd+1/2\rfloor$, we assume that the multivariate process $\diag{{(1-\mathbb{L})^{D_{\ell}}},\ell=1,\dots,p}\bX$ is covariance stationary with a spectral density matrix given by}
\begin{enumerate}[label=(M-\arabic*)]
\item\label{ass:zero-frequency} 
$
\bff^{\bD}(\lambda) = \Bigl(\diag{{\lambda}^{-d^*_1},\dots,{\lambda}^{-d^*_{\dim}}}\,\bTheta\,\diag{{\lambda}^{-d^*_1},\dots,{\lambda}^{-d^*_{\dim}}}\Bigr)\circ \bff^S(\lambda), \quad\text{for all }\lambda >0,
$
\end{enumerate}
where $\circ$ denotes the Hadamard product,
and $d_m^*=d_m-D_m\in(-0.5,0.5)$ for all $m$. The process ${(1-\L)^{D_{m}}X_m}$ is said to have long-memory if $d_m^*\in(0,0.5)$, and to be anti-persistent if $d_m^*\in(-0.5,0)$ (see for instance \cite{Lobato99,Shimotsu07}). For simplicity of notation, we will use the term long-memory parameters $\bd$ throughout the paper.  

The function $\bff(\cdot)$ is defined by
\[ 
\bff(\lambda) = (\bLambda(\lambda)\,\bTheta\,\bLambda(\lambda))\circ \bff^S(\lambda), \quad\text{for all }\lambda >0,
 \]
 with $\bLambda(\lambda)=\diag{\lambda^{-d_1},\dots,\lambda^{-d_{\dim}}}.$ Under condition \ref{ass:zero-frequency}, the function $\bff(\cdot)$ is called the generalized spectral density of the multivariate process ${\bX=}\{\bX(t),\,t\in\Z\}$.

The function $\bff^S(\cdot)$ represents the short-range memory of $\bff(\cdot)$. In order to get identifiability, it is necessary to assume $\bff^S(0)=\mathbf{1}$. The following assumption is also needed to control the regularity. 
\begin{enumerate}[label=(M-\arabic*),start=2]
\item\label{ass:beta} There exists $C_f>0$ and
$\beta>0$ such that
$
  \sup_{0<\lambda<\pi}\;\sup_{\ell, m=1,\dots,N}\frac{\abs{f_{\ell,m}^S(\lambda)-1}}{\lambda^\beta}\leq
  C_f \;.
$
\end{enumerate}

In particular, our definition agrees with the one given in \cite{KechagiasPipiras} if $D_m=0$ for all {$m=1,\dots,p$}. Definition \ref{ass:zero-frequency} includes both stationary and non-stationary time series. It has the advantage of including multivariate fractional Brownian motion \citep{mFBM13}.  

\parbox{\textwidth}{The major interest of this model is the introduction of the matrix $\bTheta$. This provides a generalization of multivariate long-memory models used in \cite{Lobato97,Shimotsu07,AchardGannaz}. Indeed, the matrix  $\bTheta$ can be written as,
\[
\Theta_{\ell,m}=\Omega_{\ell,m}\rme^{\rmi\phi_{\ell,m}}\;,\;\ell,m=1,\dots,p,
\]
with $\bOmega=(\Omega_{\ell,m})_{\ell,m=1,\dots,\dim}$ a real symmetric non-negative semi-definite matrix and $\bPhi=(\phi_{\ell,m})_{\ell,m=1,\dots,\dim}$ an anti-symmetric matrix. 
Let the bar above denote the conjugate operator. The matrix $\bTheta$ satisfies $\bTheta^T=\overline{\bTheta}$ since $\bff^T(\cdot)=\overline{\bff(\cdot)}$.  
We will use $\norm{\bOmega}$ to denote the infinity norm, that is, $\norm{\bOmega}=\max_{\ell,m=1,\dots,\dim} {\Omega_{\ell,m}}$.
In \cite{Lobato97,Shimotsu07,AchardGannaz}, the phase term was defined by $\phi_{\ell,m}=\pi(d_\ell-d_m)/2$.}

In a univariate setting, the main parameter of interest is the long-memory parameter or equivalently the Hurst parameter. In this particular case, three main families of Fourier-based estimation have already been proposed: the average periodogram estimation \citep{Robinson94a}, the log periodogram regression \citep{Geweke, Robinson95a} and semiparametric estimation based on Whittle approximation \citep{kunsch:1987,Robinson95b}. Estimation with a wavelet representation of time series was proposed in \cite{AbryVeitch98} with a log-scalogram approach similar to log-periodogram estimation and in \cite{Moulines08Whittle} with a wavelet-based local Whittle estimation.

In a multivariate setting, estimation procedures have also been proposed using either Fourier or wavelets. For a general phase term, \cite{SelaHurvich2012} proposed an estimation based on the average periodogram and \cite{Robinson08} and \cite{Baek2020} developed a Fourier-based local Whittle estimation. For a fixed phase term, $\phi_{\ell,m}=\pi(d_\ell-d_m)/2$, estimation of both the covariance structure and the long-memory was proposed by \cite{Lobato99}, \cite{Shimotsu07} and \cite{nielsen2011fully}, with a Fourier-based local Whittle estimation, and by \cite{AchardGannaz} with a similar procedure based on a real wavelet representation.

The objective of this work is to propose an estimation procedure in the general framework described above, with a general phase, based on a wavelet representation of the processes rather than a Fourier representation. Our model includes among other the co-integration case \citep{Robinson08,Nielsen11,Baek2020}. Observe that the Fourier-based local Whittle procedure proposed in \cite{Baek2020} is very closed to the one developed here.

Introducing wavelets is motivated by their flexibility for real data applications. In particular, it allows to consider non-stationary processes thanks to an implicit differentiation. The introduction of a general phase term challenges the choice of the wavelet filters. Due to condition \ref{ass:zero-frequency}, we need to consider complex filters for identifying the imaginary part of $\bTheta$. Indeed, as illustrated in \cite{spie_analytic}, with real wavelet filters it is not possible to recover both the real and the imaginary part of the matrix $\bTheta$. Complex wavelet filters, with quasi-analytic properties, are described in Section~\ref{sec:filters}. The main properties of the filters are displayed and an approximation of the covariance of the wavelet coefficients is derived in Section \ref{sec:cov}. Section~\ref{sec:whittle} recalls the definition of the wavelet local Whittle estimators. We prove their consistency and their convergence rate, as well as their asymptotic distribution. Section~\ref{sec:simu} reports some simulation results, on ARFIMA linear models and on multivariate fractional Brownian motions. Section~\ref{sec:real} provides an empirical application on neuroscience data. The detailed proofs are provided in Appendix.

\section{Transform of the multivariate process}

\label{sec:filters}

We first define the filters used to transform the multivariate time series ${\bX=}\{\bX(t),\,t\in\Z\}$.

Let $(h^{(L)}(\cdot),h^{(H)}(\cdot))$ and  $(g^{(L)}(\cdot),g^{(H)}(\cdot))$ denote two pairs of respectively low-pass and high-pass filters. 
Let $(\varphi_h(\cdot), \psi_h(\cdot))$ be respectively the father and mother
wavelets associated to $(h^{(L)}(\cdot),h^{(H)}(\cdot))$. They can be defined through their Fourier transforms as
\begin{equation}\label{eqn:phipsi_h}
\hat\varphi_h(\lambda)=2^{-1/2}\prod_{j=1}^\infty \left[2^{-1/2}\hat h^{(L)}(2^{-j}\lambda)\right] \text{  ~~and~~  }
\hat\psi_h(\lambda)=2^{-1}\hat h^{(H)}(\lambda/2)\, \hat\varphi_h(\lambda/2).
\end{equation}
Let us define similarly $(\varphi_g(\cdot), \psi_g(\cdot))$ the father and the mother wavelets associated with
the wavelet filters $g^{(L)}(\cdot)$ and $g^{(H)}(\cdot)$. Their Fourier transforms are
\begin{equation}\label{eqn:phipsi_g}
\hat\varphi_g(\lambda)=2^{-1/2}\prod_{j=1}^\infty \left[2^{-1/2}\hat g^{(L)}(2^{-j}\lambda)\right] \text{  ~~and~~  }
\hat\psi_g(\lambda)=2^{-1}\hat g^{(H)}(\lambda/2)\, \hat\varphi_g(\lambda/2).
\end{equation}
The complex father and mother wavelets $(\varphi(\cdot),\psi(\cdot))$ are then defined by 
\begin{equation}\label{eqn:phipsi_total}
\hat\varphi(\lambda)=\hat\varphi_h(\lambda)+\rmi\,\hat\varphi_g(\lambda) \text{  ~~and~~  }
\hat\psi(\lambda)=\hat\psi_h(\lambda)+\rmi\,\hat\psi_g(\lambda).
\end{equation}

{Wavelet $\psi(\cdot)$ is said to be analytic if its Fourier transform is
only supported on the positive frequency semi-axis. In particular, it is sufficient to show that the pair $(\psi_g(\cdot),\psi_h(\cdot))$ is a Hilbert pair, that is,
$
\hat\psi_g(\lambda)=-\rmi \; \sign(\lambda)\hat\psi_h(\lambda)
$, for all $\lambda\in\R$, where $\sign(\lambda)$ denotes the sign function taking values $-1,0$ and 1 for
$\lambda<0$, $\lambda=0$ and $\lambda>0$, respectively.}

{From Paley-Wiener Theorem, analytic filters with finite support do not exist. Selesnick's common factor filters propose compact wavelet filters with a relaxation of the strict analytic condition.
}

\subsection{Selesnick's common factor filters}

 We choose here to consider the quasi-analytic filters introduced by \cite{Thiran,Selesnick-thiran}. The \emph{common-factor} wavelets, defined by \cite{Selesnick-thiran}, have a compact support and are quasi-analytic. They are parameterized by a degree $L$ quantifying the approximation of the analytic property of the derived complex wavelet.  We refer the reader to \cite{Selesnick2001, Selesnick-thiran, common_factor} for a fuller  description of the construction of the wavelets and of their properties.

Let $\widehat{d}_L(\lambda)$, $\lambda\in\R$, be defined by
\begin{equation}
\widehat{d}_L(\lambda)=\rme^{\rmi\,\lambda(-L/2+1/4)}\left[\cos(\lambda/4)^{2L+1}+\rmi\,(-1)^{L+1} \sin(\lambda/4)^{2L+1}\right].
\end{equation}
Next, filters $(\hat h^{(L)}$, $\hat h^{(H)})$, and  $(\hat g^{(L)}, \hat g^{(H)})$ are defined by
\begin{gather}
\label{eqn:hat_h}\widehat{h}^{(L)}(\lambda)=2^{-M+1/2}
\left(1+e^{-i\lambda}\right)^M\hat q_{L,M}(\lambda)\,\widehat{d}_L(\lambda)
\text{   ~~and~~   }
\widehat{h}^{(H)}(\lambda)=\overline{\widehat{h}^{(L)}(\lambda+\pi)}e^{-i\lambda}\,,\\
\label{eqn:hat_g}
\widehat{g}^{(L)}(\lambda)=2^{-M+1/2}
\left(1+e^{-i\lambda}\right)^M\hat q_{L,M}(\lambda)\,\overline{\widehat{d}_L(\lambda)} e^{-i\lambda L}
\text{   ~~and~~   }
\widehat{g}^{(H)}(\lambda)=\overline{\widehat{g}^{(L)}(\lambda+\pi)}e^{-i\lambda},
\end{gather}
with $\hat q_{L,M}(\lambda)$ a real polynomial of $(e^{-i\lambda})$ such that $\hat q_{L,M}(0)=1$. Observe that the normalization of the filters is different from \cite{common_factor}.

Common-factor wavelets are introduced with $\hat q_{L,M}{(.)}$ such that the wavelet decomposition satisfies the perfect reconstruction condition. This condition is classically used for deriving wavelet bases
$2^{1/2}\psi_{g\,j,k}{(.)}=2^{-1/2}2^{j/2}\psi_g(2^j\cdot-k)$ and
$2^{1/2}\psi_{h\,j,k}{(.)}=2^{-1/2}2^{j/2}\psi_h(2^j\cdot-k)$, $j,k\in\Z$, which are 
orthonormal bases of $L^2(\R)$. In that case, $\hat q_{L,M}$ is defined as a solution of 
\begin{equation}\label{eqn:PR}
 \abs{\hat q_{L,M}(\lambda)}^2s(\lambda)+\abs{\hat q_{L,M}(\lambda+\pi)}^2s(\lambda+\pi)=1 \;,
\end{equation}
where $s(\lambda)=\frac{2^{4L-1}}{(2L+1)^2}2^{-M}(1+\cos(\lambda))^M \abs{\hat d_{L}(\lambda)}^2$. The existence of $\hat q_{L,M}(\cdot)$ is proved in \cite{common_factor}. However, to the best of our knowledge, under perfect reconstruction, no explicit expression of $\hat q_{L,M}$ is easy to obtain. Since perfect reconstruction is not necessary for deriving estimation procedures, we can assume that $\hat q_{L,M}{(.)}$ is a constant equal to 1. 

\begin{definition}[Common-Factor Wavelets (CFW)]
Let $M$, $L$ be strictly positive integers.
 Let $(\varphi{(.)}, \psi{(.)})$ be a family of Common-Factor wavelets defined by equations \eqref{eqn:phipsi_h}, \eqref{eqn:phipsi_g}, \eqref{eqn:phipsi_total}, and \eqref{eqn:hat_h}, \eqref{eqn:hat_g}. If the filter $\hat q_{L,M}{(.)}$ satisfies perfect reconstruction condition \eqref{eqn:PR}, the pair $(\varphi{(.)}, \psi{(.)})$ will be denoted by CFW-PR(M,L). If $\hat q_{L,M}{(.)}$ is a constant polynomial equal to 1, $(\varphi{(.)}, \psi{(.)})$ will be denoted by CFW-C(M,L) filters.
\end{definition}

{The two main characteristics are the compact support and the quasi-analiticity. The compact support property for CFW-C(M,L) filters is given below.}

\begin{proposition}\label{prop:support}
{Let $M$, $L$ be strictly positive integers.  The functions $\varphi(.)$, and $\psi(.)$ for CFW-C(M,L) have supports of respective length $M+2L+1$ and $M+L+1/2$.}
\end{proposition}
{The proof is given in Appendix.}

{Concerning the functions $\varphi(.)$, and $\psi(.)$ for CFW-PR(M,L), in practice, they have supports of respective length $2M+3L$ and $2M+2L+1/2$. Yet, there is no theoretical proof that these lengths are indeed achieved for all $(L,M)$. See Section 4 of \cite{common_factor}.}

Let us now recall the main result concerning the analytic approximation established in \cite{common_factor}.

\begin{theorem}[\cite{common_factor}]\label{th:main-result-analyticity}
For all $\lambda\in\R$, for all $\hat q_{L,M}{(.)}$ real polynomial of $(e^{-i\lambda})$,
\begin{gather}\label{eq:uL}
\hat\psi(\lambda)=\hat \psi_h(\lambda)+\rmi\,\hat \psi_g(\lambda)=\left(1-\rme^{\rmi \eta_L(\lambda)}\right)\hat \psi_h(\lambda)\;,\\
\label{eq:alpha-def}
\text{with~~  }  a_L(\lambda)= 2(-1)^L\,\atan\left(\tan^{2L+1}(\lambda/4)\right)
 \text{~~and~~} \eta_L(\lambda)=-a_L(\lambda/2+\pi)+\sum_{j=1}^{\infty} a_L(2^{-j-1}\lambda).
\end{gather} 
  Additionally, for all $\lambda\in\R$,
  \[
  \left|\hat\psi_h(\lambda)+\rmi\,\hat\psi_g(\lambda) -
    2\1_{\R_+}(\lambda)\,\hat\psi_{h}(\lambda)\right|= U_L(\lambda)
  \left|\hat\psi_h(\lambda)\right|\;,
  \]
  where $U_L(\cdot)$ is a $\R\to[0,2]$ function satisfying, for
  all $\lambda\in\R$,
    \begin{equation}
    \label{eq:final-analycity-result}
U_L(\lambda) \leq
    2\sqrt{2}
\left(\log_2\left(\frac{\max(4\pi,\abs{\lambda})}{2\pi}\right)+2\right)\,\left(1-\frac{\delta(\lambda,4\pi\Z)}{\max(4\pi,\abs{\lambda})}\right)^{2L+1}
 \;.
\end{equation}
and, for all $\lambda\in\R$ and $A\subset\R$, $\delta(\lambda,A)$ denotes the distance of $\lambda$ to $A$ defined by
$\delta(\lambda,A)=\inf\left\{\abs{\lambda-x},~x\in A\right\}.$
\end{theorem}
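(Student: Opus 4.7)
The plan is to split the theorem cleanly into two parts. The identity \eqref{eq:uL} is a purely algebraic identity about the filters, while the bound \eqref{eq:final-analycity-result} is an analytic statement about how close the phase $\eta_L(\lambda)$ is to $\pi$ on $\R_+$ and to $0$ on $\R_-$.

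First I would reduce everything to a multiplicative relation between the $g$- and $h$-filters. Writing $\hat d_L(\lambda) = e^{i\lambda(-L/2+1/4)}(c+is)$ with $c = \cos^{2L+1}(\lambda/4)$ and $s=(-1)^{L+1}\sin^{2L+1}(\lambda/4)$, the elementary identity $(c-is)/(c+is) = e^{-2i\arctan(s/c)}$ gives, after collecting phase factors,
\[
\frac{\overline{\hat d_L(\lambda)}\,e^{-i\lambda L}}{\hat d_L(\lambda)} = e^{-i\lambda/2}\,e^{i a_L(\lambda)}.
\]
Since $\hat q_{L,M}$ and the binomial factor $(1+e^{-i\lambda})^M$ are common to $\hat h^{(L)}$ and $\hat g^{(L)}$, this yields $\hat g^{(L)}(\lambda) = e^{-i\lambda/2}e^{ia_L(\lambda)}\hat h^{(L)}(\lambda)$. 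Substituting into $\hat g^{(H)}(\lambda)=\overline{\hat g^{(L)}(\lambda+\pi)}e^{-i\lambda}$ and using the matching relation for $\hat h^{(H)}$ yields $\hat g^{(H)}(\lambda) = i\,e^{i\lambda/2}\,e^{-ia_L(\lambda+\pi)}\,\hat h^{(H)}(\lambda)$.

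Propagating the low-pass relation through the infinite product \eqref{eqn:phipsi_h}--\eqref{eqn:phipsi_g} and summing the geometric series $\sum_{j\geq 1} 2^{-j-1} = 1/2$ gives $\hat\varphi_g(\mu)/\hat\varphi_h(\mu) = e^{-i\mu/2}\exp\bigl(i\sum_{j\geq1}a_L(2^{-j}\mu)\bigr)$. Combining with $\hat\psi_g(\lambda) = \tfrac12 \hat g^{(H)}(\lambda/2)\hat\varphi_g(\lambda/2)$, the residual factors $e^{\pm i\lambda/4}$ cancel and I obtain $\hat\psi_g(\lambda) = i\,e^{i\eta_L(\lambda)}\hat\psi_h(\lambda)$, with $\eta_L$ exactly the expression in \eqref{eq:alpha-def}. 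Plugging this into $\hat\psi = \hat\psi_h + i\hat\psi_g$ produces \eqref{eq:uL}.

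For the bound, the quantity to study is $U_L(\lambda) = |1-e^{i\eta_L(\lambda)} - 2\mathbf{1}_{\R_+}(\lambda)|$, which equals $2|\cos(\eta_L/2)|$ on $\R_+$ and $2|\sin(\eta_L/2)|$ on $\R_-$. The task is therefore to show that $\eta_L(\lambda)$ is close to $\pi\,\mathrm{sign}(\lambda)\bmod 2\pi$. The elementary estimate $|\arctan(t)-\pi/2|\leq 1/(1+t)$ applied with $t = \tan^{2L+1}(|\mu|/4)$, combined with the inequality $1-\tan(|\mu|/4)/\tan((2\pi-|\mu|)/4+\cdots) \leq 1 - \delta(\mu,4\pi\Z)/(2\pi)$ on $\mu\in(-2\pi,2\pi)$, produces the key per-scale bound
\[
\bigl|a_L(\mu) - (-1)^L\,\mathrm{sign}(\mu)\,\pi/2\bigr| \;\leq\; C\,\bigl(1-\delta(\mu,4\pi\Z)/(2\pi)\bigr)^{2L+1}.
\]
Only the scales $j$ with $2^{-j-1}|\lambda|\gtrsim 2\pi$ contribute nontrivially to $\sum_{j\geq 1} a_L(2^{-j-1}\lambda)$; there are $\log_2(\max(4\pi,|\lambda|)/(2\pi))+O(1)$ of them, while deeper scales give $a_L\approx 0$.

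The main obstacle is the bookkeeping: one must track the parity $(-1)^L$ and the sign of $\tan(2^{-j-1}\lambda/4)$ across the relevant scales, show that the telescoping sum of the dominant contributions cancels against $-a_L(\lambda/2+\pi)$ up to a net $\pi\,\mathrm{sign}(\lambda)\bmod 2\pi$, and bound the accumulated error by applying the per-scale estimate and the triangle inequality across the $O(\log(\max(4\pi,|\lambda|)/2\pi))$ active scales. The extra $\sqrt{2}$ in \eqref{eq:final-analycity-result} comes from passing from $|\cos(\eta_L/2)|$ (or $|\sin(\eta_L/2)|$) to the linearized deviation of $\eta_L$ from its ideal value, valid once that deviation is $\leq \pi/2$, which the decay of the per-scale bound ensures for $L$ large enough and can be arranged for all $L$ by monotonicity.
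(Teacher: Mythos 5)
First, a point of comparison: the paper does not prove Theorem~\ref{th:main-result-analyticity} at all --- it is quoted from \cite{common_factor} (``Let us first recall the main result established in...''), so there is no internal proof to measure yours against. Judged on its own terms, your derivation of the identity \eqref{eq:uL} is correct and complete in outline: the ratio $\overline{\hat d_L(\lambda)}\,e^{-i\lambda L}/\hat d_L(\lambda)=e^{-i\lambda/2}e^{ia_L(\lambda)}$, its propagation through the infinite products defining $\hat\varphi_g/\hat\varphi_h$, the cancellation of the residual $e^{\pm i\lambda/4}$ factors, and the reduction of the bound to $U_L(\lambda)=2\abs{\cos(\eta_L(\lambda)/2)}$ on $\R_+$ and $2\abs{\sin(\eta_L(\lambda)/2)}$ on $\R_-$ all check out.

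The second half, however, has a genuine gap. Your per-scale estimate $\bigl|a_L(\mu)-(-1)^L\,\sign(\mu)\,\pi/2\bigr|\leq C\,(1-\delta(\mu,4\pi\Z)/(2\pi))^{2L+1}$ is false as stated: since $a_L(\mu)=2(-1)^L\atan(\tan^{2L+1}(\mu/4))$, the limit of $a_L(\mu)$ as $L\to\infty$ is $0$ whenever $\abs{\tan(\mu/4)}<1$ (e.g.\ for all $\mu\in(0,\pi)$) and $\pm\pi$ --- not $\pm\pi/2$ --- whenever $\abs{\tan(\mu/4)}>1$, with the sign governed by $\sign(\tan(\mu/4))$ rather than by $\sign(\mu)$. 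For $\mu\in(0,\pi)$ your right-hand side tends to $0$ while your left-hand side tends to $\pi/2$, so no constant $C$ can work. (The auxiliary inequality $\abs{\atan(t)-\pi/2}\leq 1/(1+t)$ also fails, e.g.\ at $t=1$ where the left side is $\pi/4>1/2$.) More importantly, the step you yourself label ``the main obstacle'' --- tracking parity and the sign of $\tan(2^{-j-1}\lambda/4)$ across scales and showing that the limiting contributions of $\sum_j a_L(2^{-j-1}\lambda)-a_L(\lambda/2+\pi)$ combine to $\pi\,\sign(\lambda)$ modulo $2\pi$ with the error governed by $\delta(\lambda,4\pi\Z)$ --- is the actual content of the quantitative bound \eqref{eq:final-analycity-result}, and it is only announced, not carried out. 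Since the correct limiting profile of $a_L$ is a three-valued step function ($0$ or $\pm\pi$ according to the residue of $\mu$ modulo $4\pi$), that bookkeeping cannot be executed with the two-valued profile you posit, and the argument as written does not close.
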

In equation~\eqref{eq:alpha-def}, we adopt the convention that $\atan(\pm\infty)=\pm\pi/2$ so
that $\alpha_L(\cdot)$ is well defined on $\R$. 

Theorem \ref{th:main-result-analyticity} quantifies the quality of the analytic approximation. Observe that the function $U_L(\cdot)$ only depends on the parameter $L$. The higher $L$, the better the analytic approximation. {However, the higher $L$, the larger the wavelets support.}

\section{Moments approximations of the wavelet coefficients}
\label{sec:cov}

Let $\{\bW_{j,k},\,j\geq 0,\,k\in\Z\}$ denote the wavelet coefficients of the process $\bX$ associated to the wavelet pair $(\varphi{(.)},\psi{(.)})$. At a given resolution $j\geq 0$, for $k\in\Z$, we define the dilated and translated functions $\psi_{j,k}(\cdot)=2^{-j/2}\psi(2^{-j}\cdot -k)$. The wavelet coefficients of the process $\bX$ are defined by \[
\bW_{j,k}=\int_\R \tilde{\bX}(t)\psi_{j,k}(t)dt\quad j\geq 0, k\in\Z,
\]
where $\tilde{\bX}(t)=\sum_{k\in\Z}\bX(k)\varphi(t-k).$
Given any $j\geq 0$ and any $k\in\Z$, $\bW_{j,k}$ is a $p$-dimensional vector $\bW_{j,k}=\begin{pmatrix}
W_{j,k}(1) & W_{j,k}(2) & \dots & W_{j,k}(p) \end{pmatrix}^T$ where $W_{j,k}(a)= \int_\R \tilde{X_a}(t)\psi_{j,k}(t)\rmd t$, $a=1,\dots,p$.  Throughout the paper, we adopt the convention that large values of the scale index $j$ correspond to coarse scales (low frequencies).

We will consider the behavior of $\cov(\bW_{j,k})$, defined as follows
\begin{equation}
\cov(\bW_{j,k})=\E\left[\bW_{j,k}\overline{\bW_{j,k}}^T\right]=\int_{-\pi}^\pi 
\bff(\lambda)\abs{\hat\tau_j(\lambda)}^2\;\rmd \lambda\,,
\end{equation}
with $\hat\tau_j(\lambda)=\int_{-\infty}^\infty \sum_{\ell\in\Z}\varphi(t+\ell)\,\rme^{-\rmi\,\lambda\,\ell}2^{-j/2} \psi(2^{-j}t)\rmd t.$

In practice, a finite number of observations of the process $\bX$ are available, $\bX(1), \bX(2), \dots \bX(N)$. As the wavelets have a compact support, only a finite number of coefficients are non-zero at each scale $j$. More precisely, for every $j\geq 0$, let $n_j$ denote the number of coefficients $\bW_{j,k}$ evaluated from the observations. Note that only the coefficients evaluated without boundary effects are taken into account (see the definition of $n_j$ in Lemma \ref{lem:nj}). For all $k< 0$ or $k> n_j$, the coefficients $\bW_{j,k}$ are set to zero. In the following, we will assume that $M$ is fixed and finite, and that $L$ may go to infinity. Hence, the length of the wavelets support is equivalent to $L$ when $N$ goes to infinity. If additionally $2^j N^{-1}L\to 0$, then $n_j$ is equivalent to $2^{-j}N$. In that case, the behavior of $n_j$ is similar to the framework of \cite{Moulines08Whittle} and \cite{AchardGannaz}.

\begin{lemma} \label{lem:nj}
{Let $(\varphi{(.)},\psi{(.)})$ be a CFW-C(M,L) wavelet pair, with $M, L \geq 1$.
Let $\{\hat \bW_{j,k},\,j\geq 0,\,k\in\Z\}$ denote the wavelet coefficients evaluated from $\bX(1), \bX(2), \dots \bX(N)$ by $\hat \bW_{j,k}=\int_\R \hat{\bX}(t)\psi_{j,k}(t)\,\rmd t$, $j\geq 0$, $k\in\Z,$
where $\hat{\bX}(t)=\sum_{k=1}^N\bX(k)\varphi(t-k).$
Then, at each scale $j$, the number of coefficients $n_j$ such that $\hat\bW_{j,k}=\bW_{j,k}$ is \[n_j=\max\{0,\lfloor 2^{-j}(N-2L-M-1)-L-M-1/2\rfloor\}.\]
Suppose that $N^{-1}L\to 0$ when $N$ goes to infinity. Then, for all $j$ such that $2^jN^{-1}L\to 0$ when $N$ goes to infinity, $n_j\,2^{j}N^{-1} \to 1$ when $N$ goes to infinity.}
\end{lemma}

\subsection{Motivations}

\label{sec:example} 
 
In this section, results obtained in \cite{spie_analytic} are summarized.
We begin with a bivariate ARFI{\-}MA(0,$\bd$,0) process defined as 
 \begin{equation}\label{eqn:example}
 X_\ell(k) =  (1-\mathbb{L})^{-d_\ell} u_\ell(k),\quad\ell=1,2,\;k\in\Z\,,
 \end{equation}
where $\mathbb{L}$ is a lag operator and $\begin{pmatrix}
u_1(k) \\ u_2(k)
\end{pmatrix}$ {\it i.i.d.} with distribution $\mathcal N\left(\begin{pmatrix}
0\\ 0
\end{pmatrix},\bOmega\right)$, where $\bOmega=\begin{pmatrix}
1 & 0.8\\ 0.8 & 1
\end{pmatrix}.$
The spectral density of $(X_1,X_2)$ satisfies \ref{ass:zero-frequency} with $\Theta_{\ell,m}=\Omega_{\ell,m}\rme^{\rmi\,\phi_{\ell,m}}$, $\phi_{\ell,m}=\pi(d_1-d_2)/2$.
Let $\bd$ be equal to $(0.2, 1.2)$. The phase is equal to $\pi/2$ and, hence, $\Theta_{\ell,m}$ is imaginary.  Let us now illustrate the impossibility using real wavelets decomposition to infer $\Theta_{\ell,m}$.

We simulate $\bX(1), \dots, \bX(2^J)$, with $J=12$. For each scale $j\geq 0$, we evaluate the sample wavelet covariances as $\hat\bSigma(j)=\frac{1}{n_j}\sum_{k=0}^{n_j-1} W_{j,k}(1)\overline{W_{j,k}(2)} - \Bigl(\frac{1}{n_j}\sum_{k=0}^{n_j-1} W_{j,k}(1)\Bigr)\Bigl(\frac{1}{n_j}\sum_{k=0}^{n_j-1}\overline{W_{j,k}(2)}\Bigr)$, and the wavelet sample correlations as $\hat\Sigma_{1,2}(j)/\sqrt{\hat\Sigma_{1,1}(j)\hat\Sigma_{2,2}(j)}$. 

\begin{figure}[!ht]
\includegraphics[width=\textwidth]{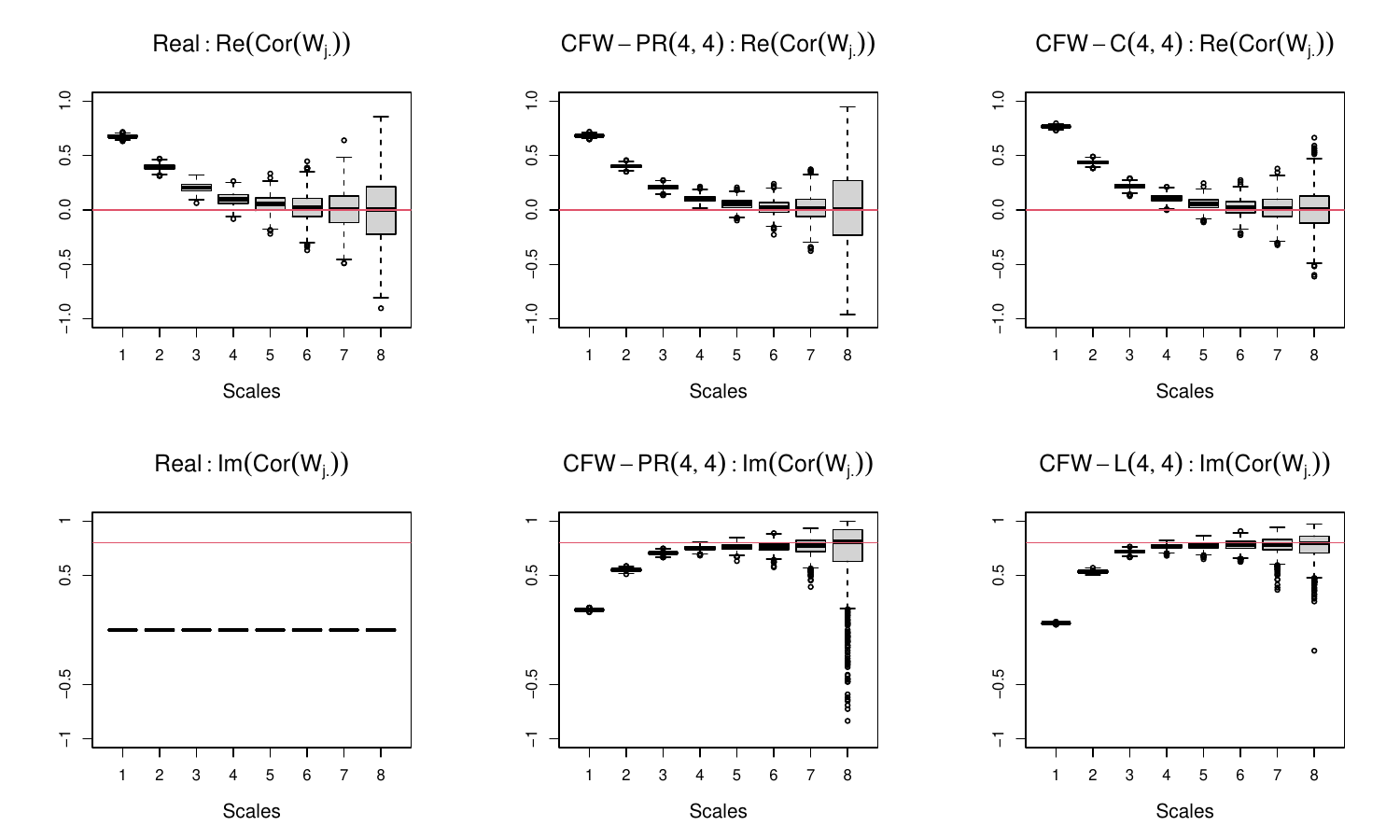}
\caption{Boxplots of sample correlations between wavelet coefficients at different scales for the bivariate process defined in \eqref{eqn:example}. First row gives the real part of the correlations and second row gives the imaginary part. Each column corresponds to different wavelet filters, from left to right: Daubechies' real wavelets with M=4, CFW-PR(4,4) and CFW-C(4,4). Horizontal red lines correspond to the real value, that is, $\Omega_{1,2}\cos(\phi_{1,2})/\sqrt{\Omega_{1,1}\Omega_{2,2}}$ for the real part and $\Omega_{1,2}\sin(\phi_{1,2})/\sqrt{\Omega_{1,1}\Omega_{2,2}}$ for the imaginary part.}
\label{fig:example}

\end{figure}

Figure \ref{fig:example} shows the behavior of sample wavelet correlations with respect to scale $j$ over 100 realizations of $(\bX(1), \dots, \bX(2^J))$. First observe that for real wavelets (left column), the wavelet sample covariance $\hat\Sigma_{1,2}(j)$ tends to 0 when the scale $j$ increases. This confirms the impossibility to identify $\Theta_{1,2}$. In addition, the plots displayed in the middle and right columns illustrate that the imaginary part of the sample wavelet coefficient correlations does not vanish for CFW-PR(M,L) and CFW-C(M,L) filters. The average sample correlation seems to converge to $\Omega_{1,2}\rme^{\rmi\phi_{1,2}}/\sqrt{\Omega_{1,1}\Omega_{2,2}}$ as the frequency decreases.

 \subsection{Theoretical results}

 We will now develop the theory of the behavior of $\cov(\bW_{j,k})$. This result consists in the extension of Proposition~3 of \cite{AchardGannaz} to quasi-analytic wavelets. The results are obtained hereafter only for CFW-C(M,L) filters. Indeed, the results are more difficult to obtain for CFW-PR(M,L) filters because no explicit expression of $\hat q_{L,M}$ satisfying \eqref{eqn:PR} is available. 
 
Our basic assumption on the regularity of the spectral density is the following.  
\begin{enumerate}[label=(C-\alph*),start=1]
\item\label{ass:parameters} 
$-M+\beta+1< 2\,d_\ell< M\quad\text{for all}\quad \ell=1,\dots,\dim$, $M\geq 2$. 
\end{enumerate}
Parameter $M$ is the number of vanishing moments and it also corresponds to the regularity of CFW-C(M,L) filters. Parameters $(d_\ell)_{\ell=1,\dots\dim}$ and $\beta$ characterize the dependence in the spectral domain \ref{ass:zero-frequency}-\ref{ass:beta}.

Let us first prove the following approximation using the regularity of the filters.
\begin{proposition}
\label{prop:covY2}
Let $\bX$ be a $\dim$-multivariate long range dependent process with long memory parameters $d_1,\dots,d_{\dim}$ with generalized spectral density $\bff(\cdot)$ satisfying~\ref{ass:zero-frequency} with short-range behavior \ref{ass:beta}. Consider $\{W_{j,k}(\ell),\, (j,k)\in\Z,\, \ell=1,\dots,p\}$ the wavelet coefficients obtained with CFW-C(M,L) filters.
Suppose that \ref{ass:parameters} hold.
Then we have, for all $j\geq0$, $k\in\Z$,
\[
\abs{\cov(W_{j,k}(\ell),W_{j,k}(m)) -
2^{j(d_\ell+d_m)}\Omega_{\ell,m}\int_{-\infty}^\infty 
\rme^{\sign(\lambda)\,\phi_{\ell,m}}\,\abs{\lambda}^{-d_\ell-d_m}
\abs{\hat\psi(\lambda)}^2\;\rmd \lambda}
\leq C_1\,2^{j\,(d_\ell+d_m-\beta)}\;,
\]
where 
 $C_1$ is a constant only depending on $M, L$ and $C_f,\beta, \norm{\bOmega}, \{d_\ell,\ell=1,\dots,\dim\}$.
\end{proposition}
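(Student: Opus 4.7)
The starting point is the spectral representation
\[
\cov(W_{j,k}(\ell),W_{j,k}(m)) = \int_{-\pi}^{\pi} f_{\ell,m}(\lambda)\,|\hat\tau_j(\lambda)|^2\,\rmd\lambda,
\]
with $f_{\ell,m}$ decomposed according to \ref{ass:zero-frequency}. Splitting the integral according to $\sign(\lambda)$ and using $f(-\lambda)=\overline{f(\lambda)}$ produces a factor $\rme^{\rmi\phi_{\ell,m}}$ on $\{\lambda>0\}$ and its conjugate on $\{\lambda<0\}$; after the dyadic change of variable $\lambda=2^{-j}u$ these combine into the $\rme^{\rmi\,\sign(u)\phi_{\ell,m}}$ appearing in the target expression. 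The same substitution extracts the prefactor $2^{j(d_\ell+d_m)}$ and leaves an integrand proportional to $|u|^{-d_\ell-d_m}\,f^S_{\ell,m}(\pm 2^{-j}u)\cdot 2^{-j}|\hat\tau_j(\pm 2^{-j}u)|^2$ on each half-line.

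Two approximations then need to be controlled. First, \ref{ass:beta} lets me replace $f^S_{\ell,m}(2^{-j}u)$ by $1$ at the cost of a perturbation bounded by $C_f\,(2^{-j}|u|)^\beta$, which is the source of the claimed rate $2^{-j\beta}$. Second, Poisson summation gives
\[
\hat\tau_j(\lambda) = 2^{j/2}\sum_{k\in\Z}\hat\varphi(\lambda+2\pi k)\,\hat\psi\bigl(-2^j(\lambda+2\pi k)\bigr),
\]
so that $2^{-j}|\hat\tau_j(2^{-j}u)|^2$ decomposes as $|\hat\varphi(2^{-j}u)|^2\,|\hat\psi(-u)|^2$ plus aliasing terms indexed by $k\neq 0$. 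The aliasing contributions, together with the error from replacing $\hat\varphi(2^{-j}u)$ by $\hat\varphi(0)$, are controlled via the $M$ vanishing moments encoded in the factor $(1+\rme^{-\rmi\lambda})^M$ of \eqref{eqn:hat_h}, exactly along the lines of Proposition~3 of \cite{AchardGannaz}. The required adaptation is that $\hat\psi=\hat\psi_h+\rmi\,\hat\psi_g$ is now complex-valued, so all pointwise decay estimates must be carried on the modulus of the full quasi-analytic wavelet.

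Integrability of the limiting expression on $\R$ follows from \ref{ass:parameters}: the $M$ vanishing moments of $\psi$ give $|\hat\psi(u)|=\BigO(|u|^M)$ near the origin, so that $|u|^{-d_\ell-d_m+\beta}|\hat\psi(u)|^2$ is integrable there as soon as $d_\ell+d_m<2M+1+\beta$, while the $M$-smoothness of $\psi$ gives $|\hat\psi(u)|=\BigO(|u|^{-M})$ at infinity, providing integrability there provided $d_\ell+d_m>-2M+1+\beta$; both conditions follow from $-M+\beta/2+1/2<d_\ell<M$. Extending the bounds of integration from $[-2^j\pi,2^j\pi]$ to $\R$ after the change of variables contributes only a tail term that decays strictly faster than $2^{-j\beta}$.

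The main technical obstacle is the uniform control of the aliasing series when integrated against $|u|^{-d_\ell-d_m}$, without degrading the $2^{-j\beta}$ rate. The key resource is that $\hat\varphi$ vanishes to order $M$ at every nonzero multiple of $2\pi$; combined with the polynomial decay of $\hat\psi$, each aliasing branch carries a $2^{-jM}$ factor that is absorbed in the $2^{-j\beta}$ error as soon as $M\geq\beta$, itself a consequence of \ref{ass:parameters}. Theorem~\ref{th:main-result-analyticity} is deliberately not invoked at this stage: the target formula already features the full complex wavelet $\hat\psi$, and quasi-analyticity will only become relevant downstream when interpreting the integrals over $\{\lambda>0\}$ and $\{\lambda<0\}$ jointly through the phase $\phi_{\ell,m}$.
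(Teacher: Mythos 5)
Your plan follows essentially the same route as the paper's own proof (which goes through the stronger Proposition~\ref{prop:covY1bis}): split the covariance integral by the sign of the frequency, rescale dyadically, replace $f^S_{\ell,m}$ by $1$ via \ref{ass:beta} to get the $2^{-j\beta}$ rate, replace $2^{-j}\abs{\hat\tau_j(2^{-j}u)}^2$ by $\abs{\hat\varphi(2^{-j}u)}^2\abs{\hat\psi(u)}^2$ using the vanishing moments and the decay of the filters (the paper packages this as inequalities \eqref{eqn:tauj}--\eqref{eqn:moments_smooth_tauj} and Proposition~\ref{prop:ass_infini}), and add back the tail $\abs{\lambda}>2^j\pi$ via \ref{ass:unif-smooth}. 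You are also right that Theorem~\ref{th:main-result-analyticity} plays no role here; the paper makes the same remark. Two differences are cosmetic: the paper tracks the $L$-dependence explicitly (the constant $C_a=2(M+L+1)$ in Proposition~\ref{prop:ass_infini} produces the $L2^{-2j}$ term of Proposition~\ref{prop:covY1bis}), whereas you absorb it into $C_1$, which the statement permits since $C_1$ may depend on $L$.

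One concrete correction: your Poisson-summation identity is written as $\hat\tau_j(\lambda)=2^{j/2}\sum_k\hat\varphi(\lambda+2\pi k)\,\hat\psi\bigl(-2^j(\lambda+2\pi k)\bigr)$, but the correct formula (derived in the paper's Section on the proof of \eqref{eqn:tauj}) carries a complex conjugate, $\overline{\hat\psi\bigl(2^j(\lambda+2\pi k)\bigr)}$, not a reflection of the argument. For a real wavelet these coincide, but here $\hat\psi$ is quasi-analytic, so $\abs{\hat\psi(-u)}^2$ and $\abs{\hat\psi(u)}^2$ are essentially supported on opposite half-lines: your main term would be $\abs{\hat\varphi(2^{-j}u)}^2\abs{\hat\psi(-u)}^2$, and after the substitution $u\mapsto -u$ the limiting integral becomes the complex conjugate of the one claimed, i.e.\ the phase $\phi_{\ell,m}$ flips sign. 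Since $\Theta_{\ell,m}$ is genuinely complex this is not harmless at the level of the final formula, even though all your modulus-based decay and aliasing estimates survive unchanged. With the conjugate restored the argument goes through exactly as in the paper.
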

The proof is given in Appendix.

The result follows from the fact that CFW-C(M,L) satisfy the assumptions (W1)--(W4) described in \cite{Moulines08Whittle} and \cite{AchardGannaz} (see Appendix). Note that it does not depend on the quasi-analytic property.

The use of the Proposition \ref{prop:covY2} in inference needs the evaluation of the integral depending of $\lvert\hat\psi(\lambda)\rvert^2$. With real wavelets, the approximation is given in Proposition~3 of \cite{AchardGannaz}. Since $\lvert\hat\psi(\lambda)\rvert^2$ is a real and symmetric function, the imaginary part of the integral is null. Consequently, a cosine term with the phase appears in the approximation of the covariance. That is, we would obtain in this framework an approximation of the form \begin{multline}\label{eqn:real}
\abs{{2^{-j\,(d_\ell+d_m)}}\cov(W_{j,k}(\ell),W_{j,k}(m)) -
2\,\Omega_{\ell,m}\cos(\phi_{\ell,m})\,\int_{0}^\infty \abs{\lambda}^{-\delta}\;\abs{\hat \psi(\lambda)}^2\;\rmd \lambda }\\
\leq C\,\norm{\bOmega}\,{2^{-j\,\beta}}.
\end{multline}

It is straightforward to check that parameters $\{\Omega_{\ell,m}, \phi_{\ell,m}\}$ are not identifiable. Estimation can be derived in the case of a parametric phase, typically $\phi_{\ell,m}=\frac{\pi}{2}(d_\ell-d_m)$ (see \cite{AchardGannaz}).

In the case of quasi-analytic wavelets, the imaginary part no longer vanishes. The control of quasi-analyticity, given by Theorem~\ref{th:main-result-analyticity}, leads to the following result.

\begin{proposition}
\label{prop:covY3}
Let $\bX$ be a $\dim$-multivariate long range dependent process with long memory parameters $d_1,\dots,d_{\dim}$ with generalized spectral density {$\bff(\cdot)$} satisfying~\ref{ass:zero-frequency}--\ref{ass:beta}. 

Consider $\bigl\{W_{j,k}(\ell),\, (j,k)\in\Z,\, \ell=1,\dots,p\bigr\}$ the wavelet coefficients obtained with CFW-C(M,L) filters.
Suppose that \ref{ass:parameters} hold and that $L$ goes to infinity, with  $L2^{-2j}\to 0$ when $j$ goes to infinity.

Then,  for all $(\ell,m)\in\{1,\dots,p\}^2$,
\begin{multline}
\label{eqn::approx_covbis}
\abs{2^{-j(d_\ell+d_m)}\,\cov(W_{j,k}(\ell),W_{j,k}(m)) -
4\,\Theta_{\ell,m}\,\int_{0}^\infty \abs{\lambda}^{-d_\ell-d_m}\;\abs{\hat \psi_h(\lambda)}^2\;\rmd \lambda}\\
\leq C_2\,\bigl(2^{-j\beta}+L2^{-2j}+L^{-M-1}\bigr)\;,
\end{multline}
where $C_2$ is a constant only depending on $M$ and $C_f,\beta, \norm{\bOmega}, \{d_\ell,\ell=1,\dots,\dim\}$.
\end{proposition}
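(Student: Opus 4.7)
The plan is to combine Proposition \ref{prop:covY2}, which already yields the $2^{-j\beta}$ contribution to the error, with Theorem \ref{th:main-result-analyticity} in order to replace $\abs{\hat\psi(\lambda)}^2$ by $4\,\1_{\R_+}(\lambda)\abs{\hat\psi_h(\lambda)}^2$ inside the integral. After dividing by $2^{j(d_\ell+d_m)}$, Proposition \ref{prop:covY2} reduces the task to comparing $\Omega_{\ell,m}\int_{-\infty}^\infty \rme^{\rmi\,\sign(\lambda)\phi_{\ell,m}}\abs{\lambda}^{-d_\ell-d_m}\abs{\hat\psi(\lambda)}^2\,\rmd\lambda$ with the target $4\,\Theta_{\ell,m}\int_0^\infty\abs{\lambda}^{-d_\ell-d_m}\abs{\hat\psi_h(\lambda)}^2\,\rmd\lambda$. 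Splitting the first integral at $\lambda=0$ and using $\Omega_{\ell,m}\rme^{\rmi\phi_{\ell,m}}=\Theta_{\ell,m}$, it is enough to show, up to the claimed error, that $\abs{\hat\psi(\lambda)}^2\approx 4\abs{\hat\psi_h(\lambda)}^2$ on $\R_+$ and $\abs{\hat\psi(\lambda)}^2\approx 0$ on $\R_-$.

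Both approximations follow from Theorem \ref{th:main-result-analyticity}, which provides $\abs{\hat\psi(\lambda)-2\,\1_{\R_+}(\lambda)\hat\psi_H(\lambda)}\leq U_L(\lambda)\abs{\hat\psi_H(\lambda)}$ with $U_L\leq 2$. Identifying $\abs{\hat\psi_H}$ with $\abs{\hat\psi_h}$ (valid modulo terms absorbed in the same $U_L$ control) and using $\bigl||a|^2-|b|^2\bigr|\leq\abs{a-b}(\abs{a}+\abs{b})$ gives
\begin{equation}
\bigl|\abs{\hat\psi(\lambda)}^2 - 4\,\1_{\R_+}(\lambda)\abs{\hat\psi_h(\lambda)}^2\bigr| \leq C\,U_L(\lambda)\,\abs{\hat\psi_h(\lambda)}^2,
\end{equation}
so the residual error reduces to $\int_\R\abs{\lambda}^{-d_\ell-d_m}U_L(\lambda)\abs{\hat\psi_h(\lambda)}^2\,\rmd\lambda$.

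To estimate this integral I split at $\abs\lambda=4\pi$. On the inner part, the $M$ vanishing moments of $\psi_h$ give $\abs{\hat\psi_h(\lambda)}^2\leq C\abs{\lambda}^{2M}$ near $0$, while \eqref{eq:final-analycity-result} yields $U_L(\lambda)\leq C\bigl(1-\delta(\lambda,4\pi\Z)/(4\pi)\bigr)^{2L+1}$. Under \ref{ass:parameters}, which ensures $2M-d_\ell-d_m+1>0$, the substitution $u=\lambda/(4\pi)$ turns the inner integral into a Beta integral $B(2M-d_\ell-d_m+1,\,2L+2)\asymp L^{-(2M-d_\ell-d_m+1)}$ as $L\to\infty$, giving the third error term. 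On $\abs\lambda>4\pi$, the logarithmic-times-geometric decay of $U_L$ combined with the polynomial decay of $\abs{\hat\psi_h}^2$ inherited from the filter regularity gives a smaller contribution.

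The $L\,2^{-2j}$ term is of a different nature: it arises from the $L$-dependence of the filter through $\hat d_L$, since $\abs{\hat d_L(\lambda)}^2 = 1-(2L+1)\lambda^2/16 + O(L^2\lambda^4)$ near $0$. Revisiting the proof of Proposition \ref{prop:covY2} with the $L$-dependence of $C_1$ retained explicitly rather than absorbed into a constant, the leading $L$-dependent correction to the spectral approximation is paired with a factor $\lambda^2$, which, after the effective scaling $\lambda\sim 2^{-j}$ of the wavelet at scale $j$, yields the announced $L\,2^{-2j}$ term. The main obstacle will lie in this last step: tracking $L$-dependence through the infinite product defining $\hat\varphi_h$ and verifying that the correction is genuinely $O(L\,2^{-2j})$ rather than $O(L\,2^{-j})$, which requires exploiting the evenness in $\lambda$ of the leading correction to $\abs{\hat d_L}^2-1$ to rule out any odd-power contribution.
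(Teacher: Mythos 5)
Your proposal is correct and follows essentially the same route as the paper: it reduces to Proposition~\ref{prop:covY2} (with the $L$-dependence of its constant made explicit, which is exactly what the paper's Proposition~\ref{prop:covY1bis} and Lemma~\ref{lem:phi} do via the quadratic correction $\abs{\hat d_L(\lambda)}^2-1=O(L\lambda^2)$ propagated through the infinite product for $\hat\varphi$), and then uses the quasi-analyticity bound of Theorem~\ref{th:main-result-analyticity} to replace $\abs{\hat\psi}^2$ by $4\1_{\R_+}\abs{\hat\psi_h}^2$, with the inner integral estimated by the Beta function $B(2M-d_\ell-d_m+1,2L+2)\asymp L^{-(2M-d_\ell-d_m+1)}$. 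The decomposition, the key lemmas invoked, and the origin of each of the three error terms all coincide with the paper's argument.
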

The proof is given in Appendix.

Convergence~\eqref{eqn::approx_covbis} can be written as follows:  when $2^{-j\beta}+L2^{-2j}+L^{-1}\to0$, for all $(\ell,m)\in\{1,\dots,p\}^2$,
\begin{gather}
\label{eqn:approx}
{\lim_{j\to\infty}2^{-j\,(d_\ell+d_m)}\cov(W_{j,k}(\ell),W_{j,k}(m)) =
G_{\ell,m},} \\
~\text{with}~ G_{\ell,m}  = \Theta_{\ell,m}\,K(d_\ell+d_m) \text{~~and~~}
K(\delta)   = 4 \int_{0}^\infty \abs{\lambda}^{-\delta}\;\abs{\hat \psi_h(\lambda)}^2\;\rmd \lambda\,.
\end{gather}
Common-factor wavelets, as stated by Proposition~\ref{prop:covY3}, have the ability of recovering simultaneously the magnitude and the phase. Observe that with real wavelets the upper bound in \eqref{eqn:real} is $2^{-j\beta}$, up to a multiplicative constant. With complex wavelets, the rate depends on $L$, and the parameter will need to be calibrated accordingly.

The specificity of CFW-C(M,L) filters is that the quality of the analytic approximation is based only on parameter $L$, as written in Proposition~\ref{prop:covY3}. Nevertheless, if we want to have an approximation with the same quality as that obtained with real wavelets, the choice of $L$ is more constrained. This trade-off is due to the fact that the greater $L$, the better analyticity approximation, but the larger the length of the wavelets support. In practice, due to numerical instability, choosing high values (\emph{i.e.} $\geq 8$) is not manageable. As shown by the simulations in Section~\ref{sec:simu}, however, the results are of good quality even with a smaller value of $L$.

\subsection{Quality of approximation}

\label{sec:approx}

To empirically assess the accuracy of the approximation, let us compare the empirical covariances of the example of Section \ref{sec:example} to the approximation of Proposition~\ref{prop:covY2}. Figure~\ref{fig:boxplots_example} displays the sample covariance of the wavelet coefficients, respectively with real Daubechies filters with $M=4$, CFW-PR(4,4) and CFW-C(4,4) filters. As for Figure~\ref{fig:example}, $N=2^{12}$ observations were considered. Observe that the covariance term is complex, and only the magnitude is represented in Figure~\ref{fig:boxplots_example}.

Figure~\ref{fig:boxplots_example} shows the difference between our theoretical findings given in Proposition~\ref{prop:covY2} and the simulations for both CFW-PR(M,L) and CFW-C(M,L). To better evaluate the quality of the approximation with CFW-C(M,L) filters, the same figure without the first scale is provided in Figure~\ref{fig:boxplots_example_zoom}. It shows that indeed the approximation improves when the scale $j$ increases. Nevertheless, the difference between the results obtained with the simulations at first scales (corresponding to the highest frequencies) and the approximation given in Proposition~\ref{prop:covY2} is higher with CFW-C(4,4) filters in comparison with Daubechies and CFW-PR(4,4) filters. Therefore, the lowest scale used in estimation may be higher with CFW-C(M,L) filters. This choice may reduce the bias but increase the variance.

\begin{figure}[p]
\centering
\includegraphics[width=0.9\textwidth]{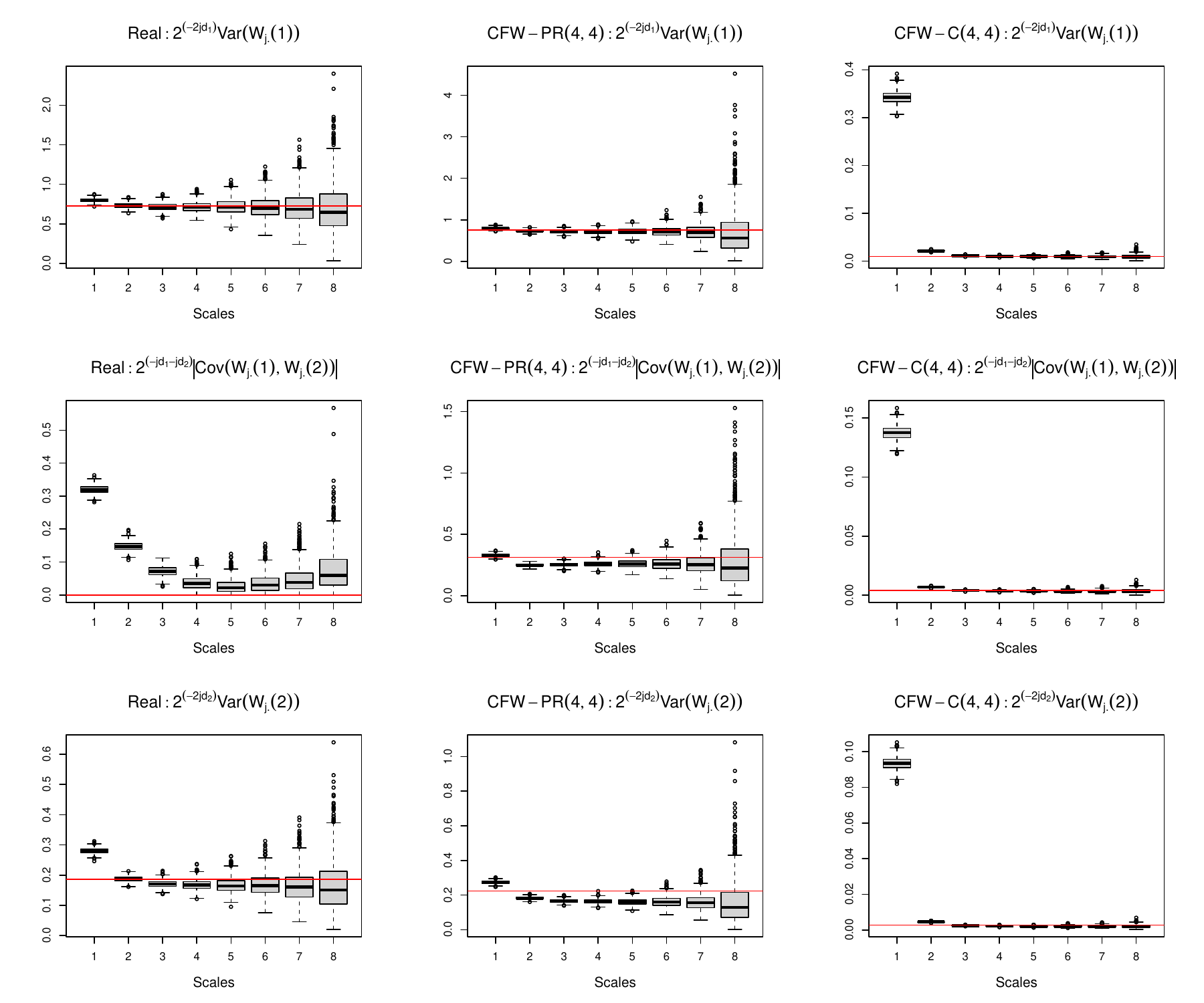}
\caption{Boxplots of normalized sample covariances between wavelet coefficients at different scales for the bivariate process defined in \eqref{eqn:example}. {Let $\mathbf{V}_{j}=\begin{pmatrix}
2^{-2\,j\,d_1}\var{\{W_{j,k}(1),\,k\in\Z\}} & 2^{-j(d_1+d_2)}\cov{\{(W_{j,k}(1),W_{j,k}(2)),\,\,k\in\Z\}} \\
2^{-j(d_1+d_2)}\cov{\{(W_{j,k}(1),W_{j,k}(2)),\,\,k\in\Z\}} & 2^{-2\,j\,d_2}\var{\{W_{j,k}(2),\,k\in\Z\}}
\end{pmatrix}$. The first row gives the sample version of $V_{j\,11}$, the second row gives the sample version of $\abs{V_{j\,12}}$ and the third row gives the sample version of $V_{j\,22}$.} Each column corresponds to a different wavelet filters, form left to right: Daubechies' real wavelets with M=4, CFW-PR(4,4) and CFW-C(4,4).  Horizontal red lines correspond to the approximation given by Proposition~\ref{prop:covY2}.}
\label{fig:boxplots_example}
\end{figure}

\begin{figure}[!ht]
\centering
\includegraphics[width=0.9\textwidth]{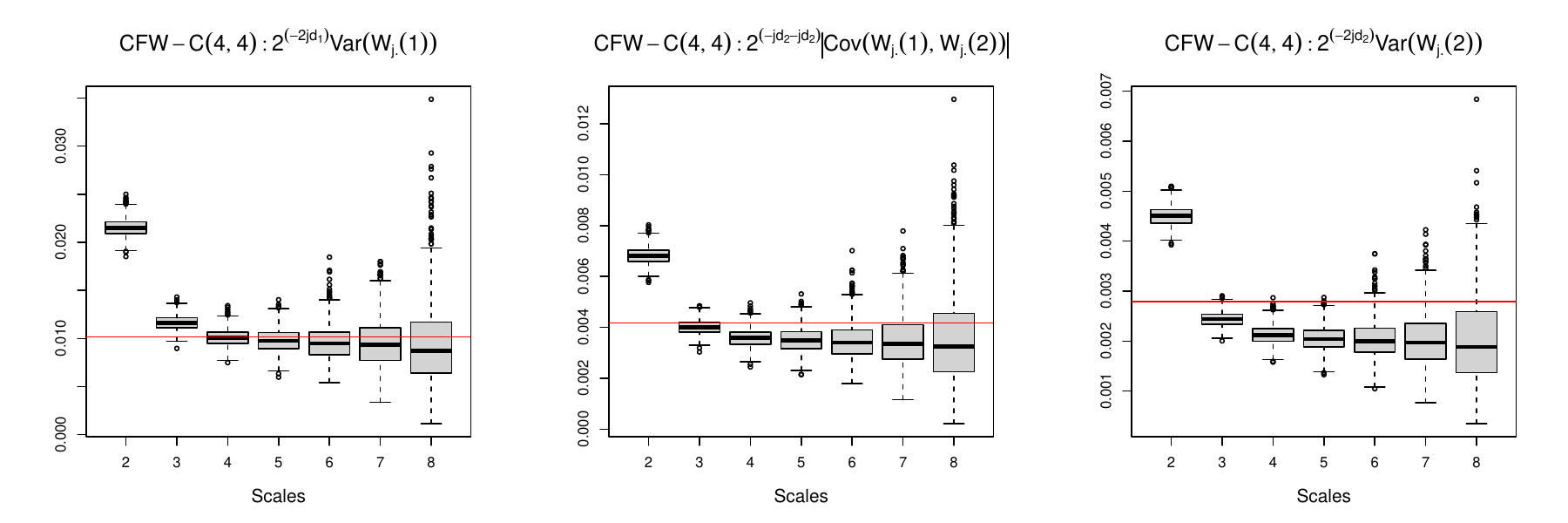}
\caption{Boxplots of normalized sample covariances between CFW-C(4,4) coefficients at different scales for the bivariate process defined in \eqref{eqn:example}. {Plots are the same as the right column of Figure~\ref{fig:boxplots_example} but without the first wavelet scale.} From left to right, panels correspond respectively to the variance of the first component, the magnitude of the covariance and the variance of the second component. Horizontal red lines correspond to the approximation given by Proposition~\ref{prop:covY2}.}
\label{fig:boxplots_example_zoom}

\end{figure}

\section{Estimation}

\label{sec:whittle}

Let $j_0$ and $j_1$, $j_1 \geq j_0 \geq 1$ be respectively the lower and the upper resolution levels used in the estimation procedure. The estimation is based on the vectors of wavelet coefficients $\{\bW_{j,k},\, j_0\leq j\leq j_1,\, k\in\Z\}$. The total number of non-zero coefficients used for estimation is then $n=\sum_{j=j_0}^{j_1} n_j$. Without restriction of generality, we can assume that $L=o(N)$.


\subsection{Estimation procedure}

Based on approximation~\eqref{eqn:approx}, the objective function $\mathcal L(\cdot)$ is defined by the wavelet Whittle approximation of the negative log-likelihood (see \cite{AchardGannaz}) 
\begin{equation} \label{eqn:whittle}
\mathcal L(\bG,\bd) = \frac{1}{n}\sum_{j=j_0}^{j_1} \left[ n_j \log\det\left(\bGamma_j(\bd)\,\bG\,\bGamma_j(\bd)\right)+\sum_{k=0}^{n_j} \overline{\bW_{j,k}}^T\left(\bGamma_j(\bd)\,\bG\,\bGamma_j(\bd)\right)^{-1}\bW_{j,k}\right]
\end{equation}
where $\bGamma_j(\bd)$ is the diagonal matrix with diagonal entries $2^{-j\,d_1},\dots,2^{-j\,d_{\dim}}$, and $\bG$ is the matrix with elements $G_{\ell,m}=\Theta_{\ell,m}K(d_\ell+d_m)$, $1\leq, \ell,m\leq p$. 
We can rewrite $\mathcal L(\cdot)$ as \begin{equation}
\label{eqn:critere}
\mathcal L(\bG,\bd) = \frac{1}{n}\sum_{j=j_0}^{j_1} \left[n_j  \log\det\left(\bGamma_j(\bd)\,\bG\,\bGamma_j(\bd)\right)+\text{trace}\left(\left(\bGamma_j(\bd)\,\bG\,
\bGamma_j(\bd)\right)^{-1}\bI(j)\right)\right],
\end{equation}
where $\bI(j)=\sum_{k=0}^{n_j} \bW_{j,k} \overline{\bW_{j,k}}^T$ denotes the (non-normalized) empirical scalogram at scale $j$.

Note that when $\bG$ is a positive definite Hermitian matrix, for all $j\geq 0$ and for all $\bd\in(-0.5,\infty)^p$, $\det(\bGamma_j(\bd)\,\bG\, \bGamma_j(\bd))$ is real and strictly positive and $\text{trace}\left((\bGamma_j(\bd)\,\bG\,\bGamma_j(\bd))^{-1}\bI_j\right)$ is real. The objective function $\mathcal L(\bG,\bd)$ is hence well-defined for $\bG$ in the set of Hermitian matrices and for all $\bd\in\R^\dim$, and takes its values in $\R$.

Differentiating expression~\eqref{eqn:critere} with respect to the matrix $\bG$ yields
\[
\frac{\partial \mathcal L}{\partial \bG}(\bG,\bd)=\frac{1}{n}\sum_{j=j_0}^{j_1}\left[n_j \bG^{-1}-\bG^{-1}\bGamma_j(\bd)^{-1}
\bI(j)\bGamma_j(\bd)^{-1} \bG^{-1}\right]^T.
\]
Some keys for complex matrix differentiation can be found in \cite{matrixdiff}. Hence, the minimum for fixed $\bd$ is attained at \begin{equation}\label{eqn:G}
\hat \bG(\bd) =\frac{1}{n} \sum_{j=j_0}^{j_1} \bGamma_j(\bd)^{-1}
\bI(j)\bGamma_j(\bd)^{-1}.
\end{equation}
In \cite{Shimotsu07} the resulting objective function only depends of $\bd$ since the phases are parametric whereas in \cite{Baek2020} the authors consider a general form of phases. In both \cite{Shimotsu07} and \cite{Baek2020}, with a Fourier-based approach, a real matrix $\bG(\bd)$ and complex valued matrices $\bGamma_j(\bd)$, including 
the phases $\bigl(\phi_{\ell,m}\bigr)_{\ell,m=,,\dots, \dim}$, are considered. $\bG(\bd)$ and $\bGamma_j(\bd)$ are estimated in a second step, together with parameter $\bd$. They minimize the objective function obtained when replacing $\bG$ by $\hat \bG(\bd)$ in \eqref{eqn:critere}. However, our procedure makes it possible to estimate the magnitude of the correlation even when the phase is equal to $\pi/2$, with imaginary terms in $\bG$.

Replacing $\bG$ by $\hat \bG(\bd)$, the objective function is defined by 
\[
R(\bd):=\mathcal L(\hat \bG(\bd),\bd)-\dim= \log\det(\hat \bG(\bd)) - \frac{1}{n}\sum_{j=j_0}^{j_1}  n_j \log\left(\det\bigl(\bGamma_j(\bd)\bGamma_j(\bd)\bigr)\right).
\]
Since $\bGamma_j(\bd)=\diag{2^{-j\bd}}$, we obtain
\begin{equation}
\label{eqn:R}
R(\bd)= \log\det(\hat \bG(\bd)) + 2\log(2)\left(\frac{1}{n}\sum_{j=j_0}^{j_1} j n_j\right)\left(\sum_{\ell=1}^p d_\ell\right).
\end{equation}
The vector of the long-memory parameters $\bd$ is estimated by $\hat \bd=\argmin_{\bd} R(\bd)$.

In a second step of estimation we define $\hat \bG(\hat \bd)$, estimator of $\bG$. And we recover an estimation of $\bTheta$ by \[
\hat\Theta_{\ell,m}=\hat G_{\ell,m}(\hat\bd)\,/\,K(\hat d_\ell+\hat d_m)\,.
\]

\subsection{Asymptotic convergence}

\label{sec:cvgce}

Following \cite{Moulines08Whittle} and \cite{AchardGannaz}, we introduce an additional condition on the variance of the scalogram {$\{\bI(j),\;j\geq 0\}$}. Examples of linear processes satisfying this condition can be found in Proposition 4 of \cite{AchardGannaz} for real wavelets. With complex wavelets, to obtain the convergence results, we need to define the parameter $L$ depending on $N$. We omit the dependence in the notation except when it is necessary. Therefore, the wavelet bases depend on $N$ as the parameter $L$ depends on $N$. Hence the wavelet scalogram $\{\bI(j), j\geq 0\}$ depends on $N$, via the number of observations used in the calculation of the coefficients and via $L$.

\setlabel{{\bf Condition (C)}}{Condition (C)}{ass:conditionC}

$L$ is a sequence of $N$, $L=L(N)$, such that,
\[
\text{for all } \ell,m=1,\ldots,p,\quad \sup_{N} \sup_{j\geq 0}\, \frac{\abs{\mbox{Var}\left({I_{\ell,m}(j)}\right)}}{n_j \, 2^{2j(d_\ell+d_m)}}  \,<\,\infty\,.
\]

Let $\bd^0$, $\bG^0$ and $\bTheta^0$ denote the true values of the parameters. 
The consistency of the estimators can be established as in \cite{AchardGannaz}.



\begin{theorem}
\label{prop:convergence}
\label{thm:cvgce}
Suppose that \ref{ass:parameters} and assumptions of Proposition~\ref{prop:covY3} hold.
Assume that \ref{ass:conditionC}  is satisfied. Denote $j_N=\max\{j,n_j\geq 1\}$. 

 Let $j_0$ and $j_1$ satisfy $\log(N)^2(2^{-j_0\beta}+ N^{-1/2} 2^{j_0/2})\to 0$ and $j_0 < j_1 \leq j_N$. 
 
\parbox{\textwidth}{Consider CFW-C(M,L) filters with $M\geq 2$ and {$2^{-2j_0}L+N^{-1}2^{j_1}L+\log(N)^3\,L^{-M-1} \to 0$}.
}

Then , $\forall(\ell,m)\in\{1,\ldots,p\}^2$,
\begin{align*}
\hat \bd-\bd^0&=\BigO_\P( L2^{-2j_0}+\log(N)L^{-M-1} + 2^{-j_0\beta}+ N^{-1/2} 2^{j_0/2}),\\
\hat G_{\ell,m}(\hat \bd)-G_{\ell,m}(\bd^0) &= \BigO_\P(\log(N)(L2^{-2j_0}+\log(N)L^{-M-1} +2^{-j_0\beta}+ N^{-1/2} 2^{j_0/2})),\\
\hat \Theta_{\ell,m}-\Theta_{\ell,m}^0&=\BigO_\P(\log(N)(L2^{-2j_0}+\log(N)L^{-M-1} +2^{-j_0\beta}+ N^{-1/2} 2^{j_0/2})).
\end{align*}

Taking $2^{j_0}=N^{1/(1+2\beta)}$ and $L=N^{\frac{\beta}{1+2\beta}\frac{1}{M}}$,
\[
\hat \bd-\bd^0=\BigO_{\P}(N^{-\beta/(1+2\beta)}).
\]
\end{theorem}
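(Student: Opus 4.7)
The strategy follows that of the corresponding theorem in \cite{AchardGannaz}, adapted to complex quasi-analytic wavelet coefficients and to the richer bias decomposition from Proposition~\ref{prop:covY3}. I write the empirical criterion $R(\bd)$ in \eqref{eqn:R} as the sum of a deterministic approximation and two fluctuation terms, and analyse its behaviour in a shrinking neighbourhood of $\bd^0$.

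First I introduce a deterministic substitute $\bG^\star(\bd)$ obtained by replacing $\bI(j)$ in the definition \eqref{eqn:G} of $\hat{\bG}(\bd)$ by its asymptotic expectation using the leading-order covariance of Proposition~\ref{prop:covY3}. Two approximations must then be controlled uniformly over a compact neighbourhood of $\bd^0$. The bias $\E[\hat{\bG}(\bd)]-\bG^\star(\bd)$ is of order $2^{-j_0\beta}+L\,2^{-2j_0}+L^{-(2M+1)}$ at the lowest scale (and smaller at higher $j$), weighted by factors at most of order $\log(N)$ once one accounts for $2^{j(d_\ell-d_\ell^0)+(d_m-d_m^0)}$ as $\bd$ varies in the neighbourhood. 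The stochastic fluctuation $\hat{\bG}(\bd)-\E[\hat{\bG}(\bd)]$ has entries of order $N^{-1/2}\,2^{j_0/2}$ by Condition~\ref{ass:conditionC} applied scale by scale and summed via Cauchy--Schwarz. Combining these estimates with the boundedness of $\bG^\star(\bd)$ away from singular Hermitian matrices gives uniform convergence of $R(\bd)$ to a limit $R_0(\bd)$ which, by a standard information-inequality argument on $\log\det$, is minimised only at $\bd=\bd^0$; this already yields consistency of $\hat{\bd}$.

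For the rate I Taylor expand $R(\bd)$ around $\bd^0$. The first-order condition $\nabla R(\hat{\bd})=0$ and a positive-definite lower bound on $\nabla^2 R(\bd^0)$ (which, up to vanishing terms, is constant and depends only on $\bG^0$ and the integrated scale weights) give
\[
\hat{\bd}-\bd^0 = -\bigl(\nabla^2 R(\bd^0)\bigr)^{-1}\nabla R(\bd^0) + o_\P\bigl(\norm{\hat{\bd}-\bd^0}\bigr).
\]
The gradient $\nabla R(\bd^0)$ inherits its size from the bias and variance terms above, giving the claimed rate $L\,2^{-2j_0}+\log(N)L^{-1}+2^{-j_0\beta}+N^{-1/2}\,2^{j_0/2}$ for $\hat{\bd}-\bd^0$. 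Feeding this back into $\hat{\bG}(\hat{\bd})$ and then into $\hat\Theta_{\ell,m}=\hat G_{\ell,m}(\hat\bd)/K(\hat d_\ell+\hat d_m)$ costs an additional $\log(N)$ factor via $\partial_{\bd}\bG^\star(\bd)$, which contains $j\log 2$ factors bounded by $\log(N)$ on the used scale range; this yields the stated rates for $\hat{\bG}$ and $\hat{\bTheta}$. Finally, balancing $2^{-j_0\beta}$ against $N^{-1/2}\,2^{j_0/2}$ gives $2^{j_0}\asymp N^{1/(1+2\beta)}$, and any $L$ with $N\,2^{j_0\beta}\leq L\leq 2^{j_0}$ makes both $L\,2^{-2j_0}$ and $\log(N)L^{-1}$ negligible compared with $N^{-\beta/(1+2\beta)}$.

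The main obstacle compared to \cite{AchardGannaz} is the interplay of the quasi-analyticity error with the scale range: $L\,2^{-2j_0}$ blows up when $L$ grows too fast relative to $2^{j_0}$, while $L^{-1}$ is large when $L$ is too small. Meeting simultaneously $L\,2^{-j_0}\to 0$ and $\log(N)^3 L^{-1}\to 0$ while still balancing with the other two error terms is exactly what forces the hypotheses on $L$ and $j_0$ in the theorem. A secondary technical point is that $\bG$ is Hermitian rather than real symmetric, so the differentiation of $\log\det$ and of the matrix inversion in \eqref{eqn:G} must be carried out in the complex-matrix framework of \cite{matrixdiff}; this changes constants but not the structure of the argument.
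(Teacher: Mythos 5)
Your plan follows essentially the same route as the paper. The paper's proof is itself presented as a modification of the argument in \cite{AchardGannaz}: one writes the weighted sums $S_{\ell,m}(\mu)=\sum_{j}\mu_j\bigl(2^{-j(d_\ell^0+d_m^0)}I_{\ell,m}(j)-n_jG^0_{\ell,m}\bigr)$, splits them into a deterministic bias part $S^{(1)}$ and a stochastic part $S^{(0)}$, replaces the bias bound by the one coming from Proposition~\ref{prop:covY3}, keeps the control of $S^{(0)}$ via \ref{ass:conditionC} and the Oppenheim-inequality contrast argument unchanged, and then proceeds exactly as in the real-wavelet case (Propositions~\ref{prop:Smu} and \ref{prop:Smu2} in the appendix). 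Your bias/fluctuation decomposition of $\hat\bG(\bd)$, the identification of the minimiser via the $\log\det$ information inequality, and the $\log(N)$ bookkeeping for $\hat\bG$ and $\hat\bTheta$ via the $j\log 2$ factors all correspond to this.

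There is, however, one concrete error in your reasoning, and it sits precisely at the only genuinely new computation of this proof. You assert that the per-scale bias $2^{-j\beta}+L2^{-2j}+L^{-(2M-d_\ell-d_m+1)}$ is ``smaller at higher $j$''. The first two terms are, but the quasi-analyticity error $L^{-(2M-d_\ell-d_m+1)}$ does not decay in $j$ at all; summing it over the $j_1-j_0+1\leq \log_2 N$ scales used in the criterion is exactly what produces the $j_1L^{-1}\leq\log(N)L^{-1}$ contribution in the rate (this is the content of the bound \eqref{eqn:S1} and of Propositions~\ref{prop:Smu}--\ref{prop:Smu2}), whereas the geometric terms remain of order $2^{-j_0\beta}+L2^{-2j_0}$ after summation. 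If your claim were taken literally, the sum over scales would be dominated by the $j=j_0$ term and no $\log(N)$ factor would appear on $L^{-1}$; your final stated rate nonetheless contains $\log(N)L^{-1}$, so the conclusion is right but the justification is internally inconsistent and this step needs to be made explicit.
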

Elements of proof are given in Appendix.

The convergence rate $\hat \bd-\bd^0=\BigO_{\P}(N^{-\beta/(1+2\beta)})$ is optimal in minimax sense \citep{giraitis:robinson:samarov:1997}.

The condition on the scales used in the estimation is $\log(N)^2(2^{-j_0\beta}+ N^{-1/2} 2^{j_0/2})\to 0$. As explained in \cite{multiwave}, this indicates that highest frequencies should be removed. The number of scales to remove depends on the short-range dependence via $\beta$. In practice, $j_0$ can be chosen applying a bootstrap procedure on the time series, see \cite{multiwave}.

Next, the parameters $M$ and $L$ in CFW-C(M,L) filters are subject to the conditions \ref{ass:parameters} and $2^{-2j_0}L+N^{-1}2^{j_1}L+\log(N)^3\,L^{-M-1} \to 0$. Condition \ref{ass:parameters} only depends on $M$. It is very similar to the one given in \cite{AchardGannaz} with real filters. It imposes that the number of vanishing moments $M$ is high enough. 

The parameter $L$ quantifies the quality of the analytic approximation of CFW-C(M,L) filters. The assumption $2^{j_1}N^{-1}L\to 0$ results from Lemma \ref{lem:nj}. This is a technical assumption  allowing $n_j$ to be equivalent to $2^{-j}N$ as $N$ goes to infinity. This facilitates the translation of the proofs from real wavelets to complex wavelets. 
 This assumption deals with the highest scale $j_1$. It can be formulated on $j_0$, $N^{-1}2^{j_0}L\to 0$, when $j_1=j_0+\Delta$, with $\Delta <\infty$. The condition $\log(N)^3\,L^{-M-1} \to 0$ guarantees that $L$ is high enough for the analytic approximation to be satisfactory. Alternatively, $L$ should not be too high, and condition $2^{-2j_0}\,L\to 0$ ensures that the size of the wavelets support remains reasonable. As discussed in Section \ref{sec:simu}, in practice, the choice of $L$ is not critical, but this condition influences the choice of $j_0$. It must be higher than the usual choice for real filters. This also appears in the discussion in Section \ref{sec:approx}, where it can be seen that the behavior of the wavelet coefficients at first scales differs from other scales.

{\textit{Remark.} \cite{Baek2020} observe that in case of co-integration, the corresponding magnitude $\Omega_{a,b}$ is equal to 0. Hence, the phase parameter $\varphi_{a,b}$ is not identifiable. To counter this problem, \cite{Baek2020} propose to use another parametrization, where $\Theta_{a,b}$ is decomposed into its real and its imaginary parts. Our procedure estimates the complex matrix $\bTheta$, which is always identifiable, so this discussion is unnecessary here.}

\subsection{Asymptotic normality}

A useful result in estimation is asymptotic normality. For real wavelet-based local Whittle estimation, in a multivariate context, it has been studied by \cite{normality}. The proof of the latter can be extended to common-factor wavelets.

Let us introduce an additional assumption on the process $\bX$.
\begin{enumerate}[label=(M-3)]
\item \label{ass:linear}
There exists a sequence $\{\bA^{(\mathbf{D})}(u)\}_{u\in\Z}$ in $\R^{p\times p}$ such that $\sum_{u\in\Z} \max_{a,b=1,\dots,p}|A_{a,b}^{(\mathbf{D})}(u)|^2<\infty$ and 
\[
\forall t\in\Z,\,~\bigl({(1-\L)^{D_{a}}}X_a(t)\bigr)_{a=1,\dots,p}=\sum_{u\in\Z} \bA^{(\mathbf{D})}({t+u}) \bepsilon(u)
\]
with $\bepsilon(t)$ weak white noise process, in $\R^p$. Let $\mathcal F_{t-1}$ denote the $\sigma$-field of events generated by $\{\bepsilon(s), \,s\leq t-1\}$. Assume that $\bepsilon$ satisfies $\E[\bepsilon(t)|\mathcal F_{t-1}]=0$, $\E[\epsilon_a(t)\epsilon_b(t)|\mathcal F_{t-1}]=\1_{a=b}$ and $\E[\epsilon_a(t)\epsilon_b(t)\epsilon_c(t)\epsilon_d(t)|\mathcal F_{t-1}]=\mu_{a,b,c,d}$ with $|\mu_{a,b,c,d}|\leq \mu_\infty<\infty$, for all $a,b,c,d=1,\ldots,p$.
For all $(a,b)\in\{1,\dots,p\}^2$, for all $\lambda\in\R$, the sequence $(2^{-j\,d_a}\lvert A^{(\mathbf{D})\ast}_{a,b}(2^{-j}\lambda)\rvert)_{j\geq 0}$ is convergent as $j$ goes to infinity.
\end{enumerate}

The asymptotic normality of the estimator of the long-memory parameters is established by our following theorem. For $\bM\in\R^{p\times p}$, $\vect{\bM}$  denotes the operation that transforms a matrix $\bM\in\R^{p\times p}$ into a vector of $\R^{p^2}$.

\begin{theorem}
\label{thm:normality}
Suppose that conditions of Theorem~\ref{thm:cvgce} are satisfied and that assumption \ref{ass:linear} hold.
Let $j_0 < j_1 \leq j_N$ with $j_N=\max\{j,n_j\geq 1\}$ such that \[
j_1-j_0\to \Delta\in\{1,\dots,\infty\},\; \log(N)^2(N 2^{-j_0(1+2\beta)}+ N^{-1/2} 2^{j_0/2})\to 0. 
\] Define $n=\sum_{j=j_0}^{j_1} n_j$.

Consider CFW-C(M,L) filters with $M\geq 2$ and $${N^{-1}2^{j_1}L+}\log(N)^3\,N^{1/2} 2^{-j_0/2}(L2^{-2j_0}+ L^{-M-1})\to 0.$$
Then, \begin{itemize}[topsep=-10pt]
\item $\sqrt{n}(\hat\bd -\bd^0)$ converges in distribution to a centered Gaussian distribution with a variance $\mathbf{V}^{(\bdexp)}(\Delta)$ defined in Appendix, equation \eqref{eqn:vard}.

\item $\vect{\sqrt{n}\left(\hat \bG(\hat\bd)-\bG^0\right)}$ converges in distribution to a centered Gaussian distribution with a variance $\mathbf{V}^{(\bGexp)}(\Delta)$ defined in Appendix, equation \eqref{eqn:varG}.
\end{itemize}
\end{theorem}
The proof is very similar to the one of \cite{normality}. Some points are detailed in Appendix.

The highest scale is $j_1=j_0+\Delta$. The theorem distinguishes the cases $\Delta < \infty$ and  $\Delta = \infty$. Note that when $\Delta<\infty$, the condition $N^{-1/2} 2^{j_1/2}L\to 0$ is equivalent to $N^{-1/2} 2^{j_0/2}L\to 0$. Hence, the condition on $L$ writes as $\log(N)^3\,N^{1/2} 2^{-j_0/2}(L2^{-2j_0}+ L^{-M-1})\to 0$.

{\it Remark.} Observe that with the condition on $j_0$, the minimax rate is not achieved. But we can take $2^{j_0}=N^{a_0}$ with ${1/(1+2\beta)}<{a_0}<1$. If $L$ is defined as $L=N^{b_0}$, then it suffices that $\frac{\beta}{1+2\beta}.\frac{1}{M+1}<b_0<\frac{1}{2}\min\bigl\{1-a_0, 5a_0-1\bigr\}$. For example, we can take $2^{j_0}=N^{(1+\beta)/(1+2\beta)}$ and $L=N^{c_0.\beta/(1+2\beta)}$ with $\frac{1}{2(M+1)}<c_0<\frac{1}{2}$.

As detailed in \cite{normality}, these results allow to build hypothesis tests on the long-memory parameters and on the long-run covariance. 

\section{Simulation study}

\label{sec:simu}

In this section, we verify the accuracy of the covariance approximation given in Proposition~\ref{prop:covY3} and the consistency of the parameters estimates provided in Proposition \ref{prop:convergence} on simulated data. We consider 1000 Monte-Carlo simulations of bivariate long-memory processes $\bX$ observed at $\bX(1), \dots, \bX(N)$ with $N=2^{12}$. For each process, we compute the wavelet coefficients using CFW-PR(4,4) and CFW-C(4,4) filters.

We compare the quality of estimation of parameters $\bd$ to the one given by real wavelets, namely Daubechies' wavelets with 4 vanishing moments. A comparison with a Fourier-based local Whittle procedure is also provided, when the simulated processes are stationary.

Observe that comparisons of Fourier-based and wavelet-based estimation procedures in univariate settings have been done previously in \cite{nielsen2005finite} and \cite{fay:moulines:roueff:taqqu:2008survey}. In multivariate settings, comparisons have been done in \cite{multiwave} with a parametric phase. With a general phase, \cite{Baek2020} proposed Monte Carlo simulations using Fourier-based approach. The specificity here is to extend simulations of the former to non-stationary processes.

The estimated parameters are $\bd=(d_1,d_2)$, the magnitude of the long-run covariance ${\bOmega}$, the phase $\phi=\phi_{1,2}$ and the long-run correlation $\rho=\frac{{\Omega_{1,2}}}{\sqrt{{\Omega_{1,1}}{\Omega_{2,2}}}}$. For each parameter, we will evaluate the quality of estimation by the bias, the standard deviation (std) and the Root Mean Squared Error, RMSE = $\sqrt{\text{bias}^2+\text{std}^2}$.

Two models are considered: models admitting a linear representation called ARFIMA, and multivariate fractional Brownian motions (mFBM).

\subsection{ARFIMA models}

\label{sec:arfima}

We first provide an estimation example on linear time series. Let $\xi$ be a $p$-dimensional white noise with $\E[\xi(t)\mid \mathcal F_{t-1}]=0$ and $\E[\xi(t)\xi(t)^T\mid \mathcal F_{t-1}]=\bSigma$ with $\bSigma$ positive definite, where $\mathcal F_{t-1}$ is the $\sigma$-field generated by $\{\xi(s),\,s<t\}$. The spectral density of $\xi$, denoted $\bff_\xi(.)$,  satisfies $\bff_\xi(\lambda)=\bSigma$, for all $\lambda\in\R$.

Let $(\bA_k)_{k\in\N}$ be a sequence in $\R^{p\times p}$ with $\bA_0$ the identity matrix and $\sum_{k=0}^\infty \|\bA_k\|^2<\infty$. Let $\bA(\cdot)$ be the discrete Fourier transform of the sequence, $\bA(\lambda)=\sum_{k=0}^{\infty} \bA_k e^{i k\lambda}$. We assume that $|\bA(\L)|$ has all its roots outside the unit circle so that $\bA(\cdot)^{-1}$ is defined and smooth on $\mathbb{R}$. We also define $(B_k)_{k\in\N}$ to be a sequence in $\R^{p\times p}$ with $B_0$ the identity matrix and $\sum_{k=0}^\infty \|B_k\|^2<\infty$. Let $\bB(\cdot)$ be the discrete Fourier transform of the sequence, $\bB(\lambda)=\sum_{k=0}^{\infty} \bB_k e^{i k\lambda}$.

Let us define the process $\bX=\{\bX(t),\,t\in\Z\}$ by
\begin{equation}
\label{eqn:arfima}
\bA(\L)\,\diag{(1-\L)^{\bd}}\,\bX(t)=\bB(\L)\xi(t),\,t\in\Z.
\end{equation} 
The spectral density of $\bX$ satisfies
\[
\bff(\lambda)=(1-e^{-i\lambda})^{-\bd}\bA(e^{-i\lambda})^{-1}\bB(e^{-i\lambda})\bff_\xi(\lambda)\bB(e^{i\lambda})^T {\bA(e^{i\lambda})^T}^{-1}(1-e^{i\lambda})^{-\bd}.
\]
In particular
\[
f_{\ell,m}(\lambda)\sim_{\lambda\to 0^+} G_{\ell,m}e^{-i\pi/2(d_\ell-d_m)}\lambda^{-(d_\ell+d_m)}\;,
\]
with $\bG=\bA(0)^{-1}\bB(0)\bff_\xi(\lambda)\bB(0)^T{\bA(0)^T}^{-1}=\bA(0)^{-1}\bB(0)\bSigma\bB(0)^T{\bA(0)^T}^{-1}$ a real valued matrix. Condition \ref{ass:beta} is satisfied with $\beta =\min_\ell(d_\ell)$. In this case $\bff(0^+)= \bff(0^-)$.

This corresponds to Model A of \cite{Lobato97}. Note that this model satisfies the definition of LRD processes of \cite{KechagiasPipiras}. In \cite{KechagiasPipiras}, the process $\xi$ is not necessarily a white-noise process, and verifies $\bff_\xi(\lambda)\sim_{\lambda\to 0^+} \bSigma$, which is indeed true for a white-noise process.

Following \eqref{eqn:arfima}, we have simulated $\bX(1),\dots \bX(N)$ in \eqref{eqn:arfima} with $N=2^{12}$, null $\bA_k$ and $\bB_k$ for $k\geq 0$. That is, there is no short-range terms in the model. We consider three sets of values for $\bd$, $\bd\in\{(0.2,0.2),\;(0.2,0.4),$ $\;(0.2,0.8) \}$. Matrix $\bSigma$ is equal to $\begin{pmatrix}1& \rho \\
\rho & 1\end{pmatrix}$, with $\rho=0.8$. The phase is equal to $\pi(d_1-d_2)/2$ which is respectively equal to $0, \pi/10, 3\pi/10$. Simulations were done using R package {\it multiwave} \citep{multiwave-package}.

{\it Remark.} The objective of this simulation part is to compare estimations based respectively on real and complex wavelet filters. Hence, only simulations with null $\bA_k$ and $\bB_k$ are considered. We refer to \cite{multiwave} for results with non null AR and MA parts, with real wavelet filters.

Figure~\ref{fig:boxplots_arfima} displays the boxplots of the correlations between the wavelet coefficients obtained by CFW-PR(4,4) filter at different scales. It illustrates that the approximation of Proposition~\ref{prop:covY3} is valid, especially for high scales (lowest frequencies), even if it has not been established theoretically for such filters. The figure shows that the approximation of Proposition \ref{prop:covY3} is slightly more accurate for the real part of wavelet correlations than for the imaginary part.

\begin{figure}[!ht]
\begin{subfigure}[t]{0.32\textwidth}
\caption{$\bd=(0.2, 0.2)$}
\includegraphics[width=\textwidth]{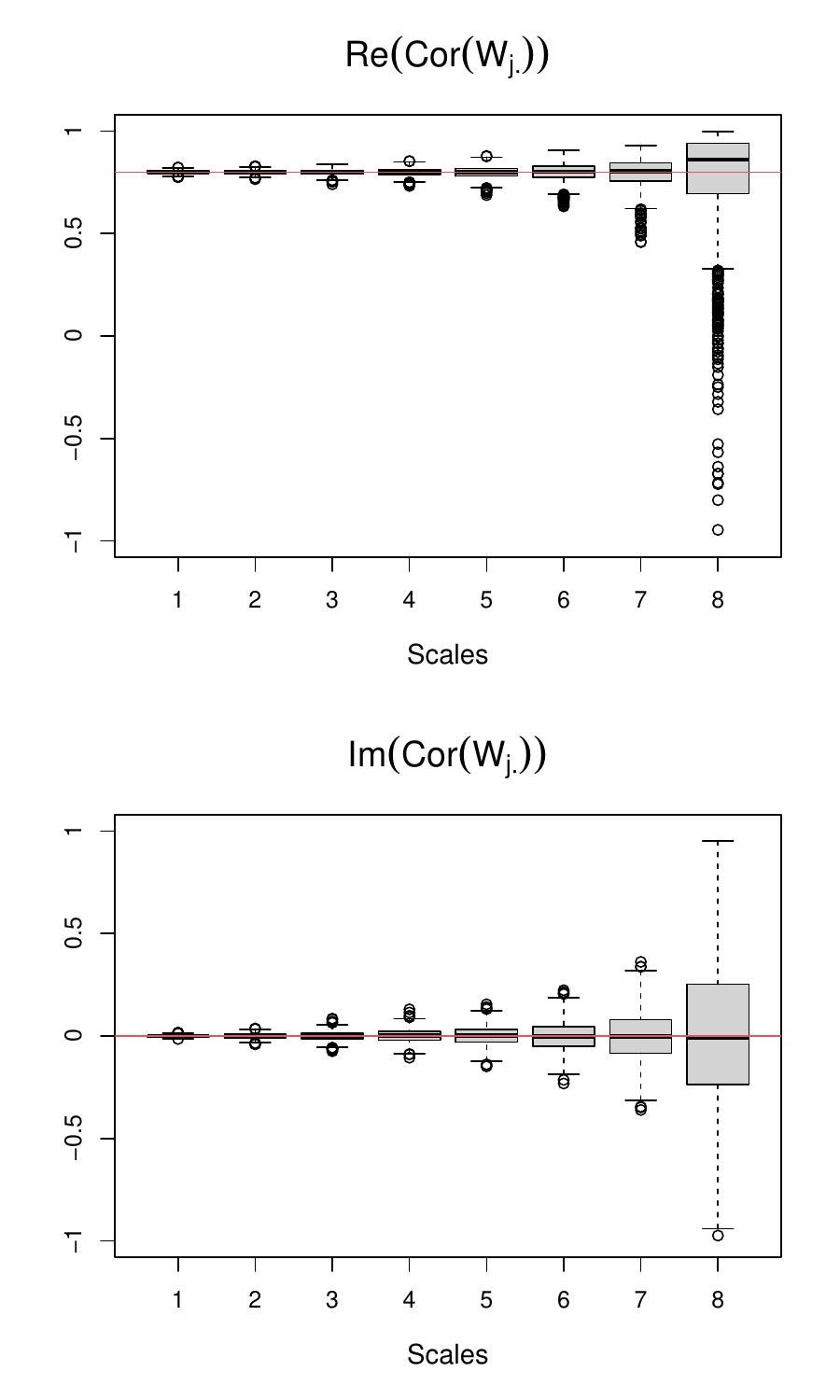}
\end{subfigure}
\begin{subfigure}[t]{0.32\textwidth}
\caption{$\bd=(0.2, 0.4)$}
\includegraphics[width=\textwidth]{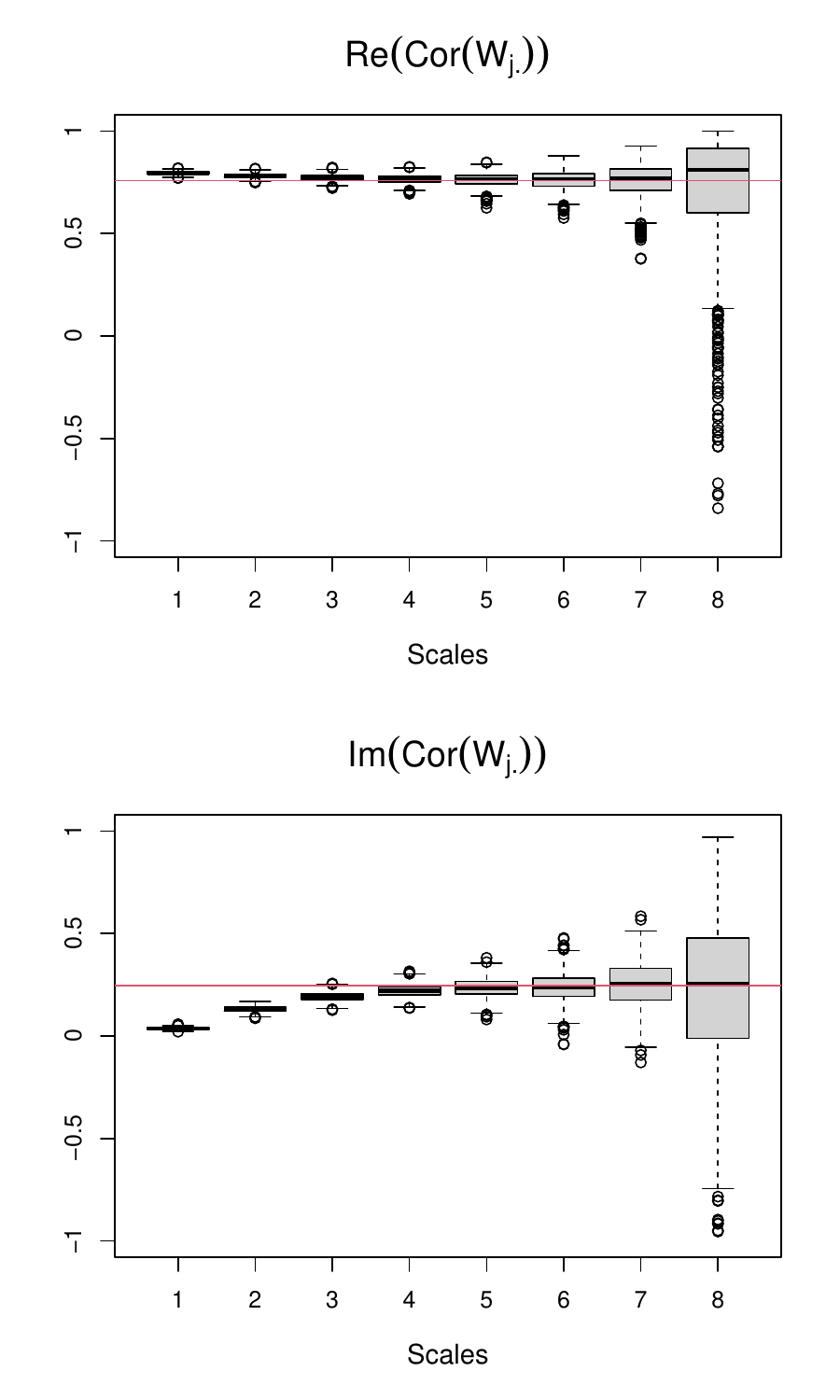}
\end{subfigure}
\begin{subfigure}[t]{0.32\textwidth}
\caption{$\bd=(0.2, 0.8)$}
\includegraphics[width=\textwidth]{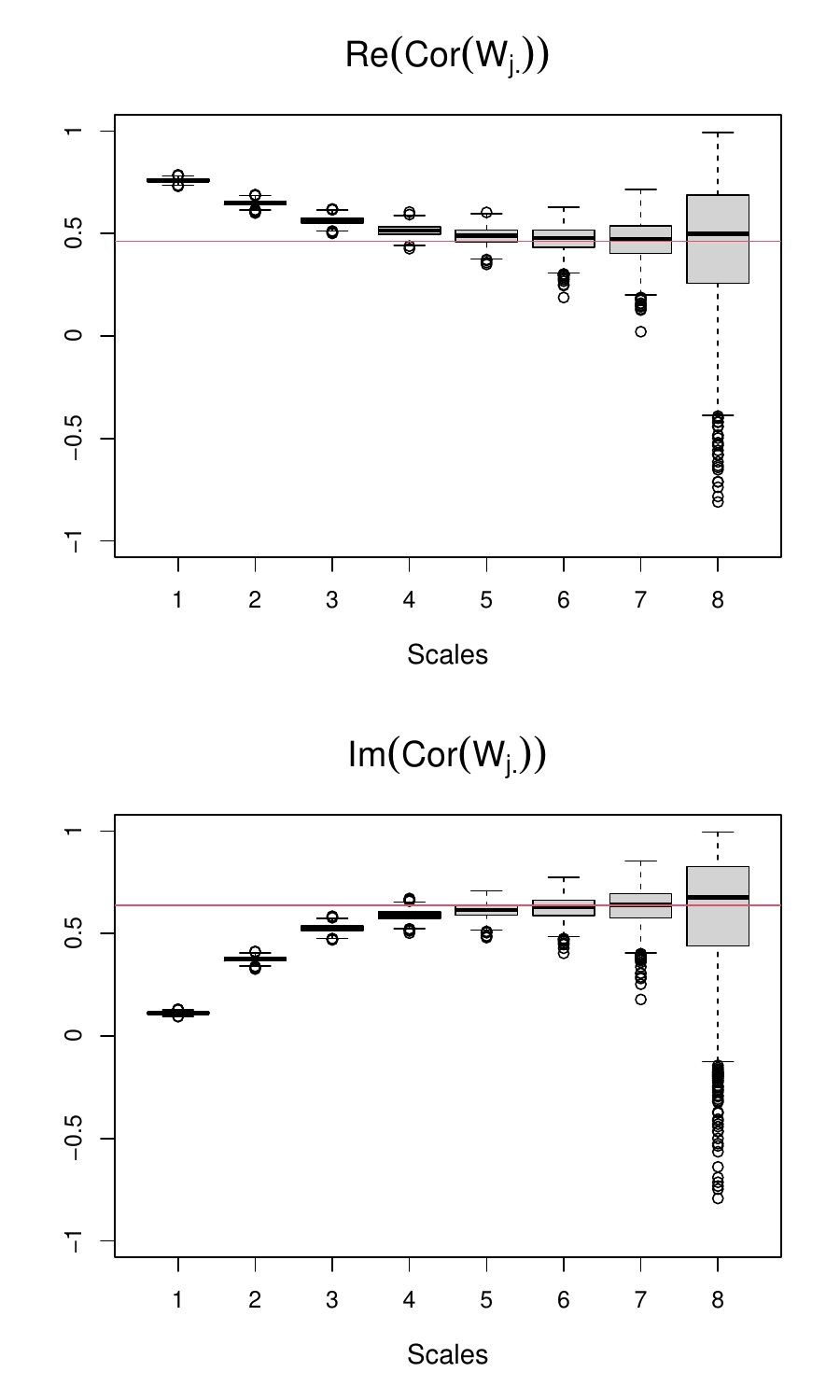}
\end{subfigure}
\caption{Boxplots of correlation between CFW-PR(4,4) coefficients at different scales for ARFIMA processes. First row gives the real part of the correlations and second row gives the imaginary part. Each column corresponds to a given value of parameter $\bd$. Horizontal red lines correspond to the approximation given by Proposition~\ref{prop:covY3}, $\rho\cos(\phi)r_K$ for the real part and  $\rho\sin(\phi)r_K$ for the imaginary part, with $r_K=K(d_1+d_2)/\sqrt{K(2\,d_1)K(2\,d_2)}$.}
\label{fig:boxplots_arfima}

\end{figure}

The results for the estimation of the long-memory parameters $\bd$ are displayed respectively in Table~\ref{tab:arfima} for CFW-PR(4,4) filter and in Table~\ref{tab:arfima_C} for CFW-C(4,4) filter. Based on \cite{multiwave}, the only hyperparameter to choose is the minimal scale $j_0$. The example in Section~\ref{sec:approx} illustrates that the behaviors of Daubechies' wavelets and CFW-PR(M,L) are very similar with respect to scales. Thus, for both of them, we consider $j_0=1$ when the two components of the time series are stationary, and $j_0=2$ when a component is not stationary. These choices are motivated by previous studies \citep{multiwave}.

 Table~\ref{tab:arfima} shows that the estimation of $\bd$ with CFW-PR(M,L) filters is good and similar to the Daubechies' real wavelet-based estimation.

When $\bd=(0.2,0.2)$ or $\bd=(0.2,0.4)$, the processes are stationary, it is then possible to estimate the parameters of the model using a Fourier-based procedure. Following \cite{Baek2020}, we implemented a Fourier-Based local Whittle estimation by keeping our parametrization. In particular,  the complex matrix $\bTheta$ is not decomposed in real and imaginary parts or in magnitude and phase in the objective function. Let $\lambda_j = 2\pi j/N$, $j=1, \dots, m$, be the Fourier frequencies used in estimation, $m\in\N$. Define $\bLambda_j^F(\bd)=\diag{\lambda_j^{\bd} \rme^{-\rmi\lambda_j\,\bd/2}}$.
The Fourier-based estimators $(\hat\bd^{MFW},\hat\bTheta^{MFW})$ are minimizers of the objective function 
\[ 
\frac{1}{m}\sum_{j=1}^{m} \left[ \log\det\Bigl(\bLambda_j^F(\bd)\bTheta
\bLambda_j^F(\bd)^\ast\Bigr)+\bW^{F}(\lambda_j)^{\ast}\Bigl(\bLambda_j^F(\bd)\bOmega(\bd) \bLambda_j^F(\bd)^{\ast} \Bigr)^{-1}\bW^F(\lambda_j)\right].
\]
Note that this objective function differs from \cite{Baek2020} since we chose a different parametrization (see Section \ref{sec:cvgce}).
We consider $m=N^{0.65}$, as suggested by \cite{Lobato99,Shimotsu07,Nielsen11}.

  Concerning the estimation with CFW-C(M,L), it has been shown in Section \ref{sec:approx} that the approximation of Proposition \ref{prop:covY2} is not accurate for the first scales. Therefore, the procedure is executed with $j_0=4$ in each case, to reduce the bias of the long-run covariance estimation. Since fewer scales are available in the procedure, the variance increases and the quality of the estimation is lower than the one based on CFW-PR(M,L). Table~\ref{tab:arfima_C} illustrates that the quality of the estimation with CFW-C(M,L) filters presents a good accuracy, however lower than that with CFW-PR(M,L) filters.

\begin{table}[!ht]
\centering
\begin{tabular}{lccccc}
\toprule
 $\bd$ &  bias & std & RMSE & PR/Real & {PR/Fourier}\\
 \midrule \midrule
 0.2 & -0.0066 & 0.0165 & 0.0178 & 1.0375 & {1.0000} \\ 
 0.2 & -0.0069 & 0.0156 & 0.0171 & 1.0228 & {1.0108}\\   \midrule
 0.2 & -0.0080 & 0.0165 & 0.0183 & 1.1537 & {1.0000}\\ 
 0.4 & -0.0138 & 0.0159 & 0.0211 & 1.1888 & {0.9606}\\ \midrule
 0.2 & -0.0094 & 0.0255 & 0.0272 & 0.9072 & .\\ 
 0.8 & -0.0145 & 0.0270 & 0.0307 & 1.3319 & . \\ \bottomrule
\end{tabular}
\caption{Results for the estimation of long-memory parameters $\bd$ with CFW-PR(4,4) filters using ARFIMA processes. $j_0=1$ for $\bd\in\{(0.2,0.2), (0.2,0.4)\}$ and $j_0=2$ for $\bd\in\{(0.2,0.8)\}$. PR/Real denotes the ratio between the RMSE given by CFW-PR(4,4) filter and the RMSE given by Daubechies' real filter. {PR/Fourier denotes the ratio between the RMSE given by CFW-PR(4,4) filter and the RMSE given by a Fourier-based local Whittle procedure.}}
\label{tab:arfima}
\end{table}

\begin{table}[!ht]
\centering
\begin{tabular}{lcccc}
\\
\toprule
 $\bd$ &  bias & std & RMSE & C/PR \\
 \midrule \midrule
 0.2 & -0.0152 & 0.0401 & 0.0429 & 2.4155 \\ 
 0.2 & -0.0145 & 0.0392 & 0.0418 & 2.4504 \\   \midrule
 0.2 & -0.0158 & 0.0394 & 0.0425 & 2.3229 \\ 
 0.4 & -0.0150 & 0.0385 & 0.0413 & 1.9626 \\  \midrule
 0.2 & -0.0166 & 0.0397 & 0.0430 & 1.5837 \\ 
 0.8 & -0.0163 & 0.0390 & 0.0422 & 1.3762 \\    \bottomrule
\end{tabular}
\caption{Results for the estimation of long-memory parameters $\bd$ with CFW-C(4,4) filters using ARFIMA processes. $j_0=4$. C/PR denotes the ratio between the RMSE given by CFW-C(4,4) filter and the RMSE given by CFW-PR(4,4) filter in Table \ref{tab:arfima}.} 
\label{tab:arfima_C}
\end{table}

Table~\ref{tab:arfimaG} and Table~\ref{tab:arfimaG_C} give the results for the estimation of the covariance structure, respectively for CFW-PR(4,4) filter and for CFW-C(4,4) filter. The quality of the estimation using CFW-PR(4,4) improves significantly the results obtained with real wavelets. This is not the case for CFW-C(4,4) filters where the quality of estimation deteriorates.

With CFW-PR(M,L) filters, the results for the phase parameter $\phi$ are less satisfactory. A bias term can be observed when the phase $\phi$ increases. This term is for example of order $\pi/10$ when estimating the phase of $3\pi/10$, corresponding to the case $\bd=(0.2, 0.8)$. {However, our results for the stationary cases are comparable or significantly better than those obtained by the Fourier-based Whittle estimation. Similar qualities were observed in \cite{Baek2020}.} Estimating the phase is challenging. Interestingly, the quality of the estimation based on CFW-C(M,L) filters is much stable with respect to the phase values. Indeed, even when the phase value increases, the bias remains constant.

\begin{table}[!ht]
\centering
\begin{tabular}{lcccccc}
\toprule
 $\bd$ & & bias & std & RMSE & PR/Real & {PR/Fourier}\\ 
 \midrule \midrule
 ( 0.2 , 0.2 ) & $\Omega_{1,1}$ & 0.0087 & 0.0227 & 0.0243 & 0.5089 & {1.6669}\\ 
 	 & $\Omega_{1,2}$ &  0.0076 & 0.0204 & 0.0218 & 0.5509 & {1.6621}\\ 
 	 & $\Omega_{2,2}$ &  0.0092 & 0.0229 & 0.0247 & 0.5184 & {1.6796} \\ 
  	 & correlation    &    4e-04 & 0.0057 & 0.0057 & 0.9702 & {0.6987} \\ 
 	  & phase    &    1e-04 & 0.0081 & 0.0081 & .& {1.0869}\\ \midrule
 ( 0.2 , 0.4 ) & $\Omega_{1,1}$ & 0.0082 & 0.0229 & 0.0243 & 0.4919  & {1.7049}\\ 
 	 & $\Omega_{1,2}$ &  0.0028 & 0.0206 & 0.0208 & 0.4883 & {1.4759}\\ 
 	 & $\Omega_{2,2}$ &  0.0146 & 0.0231 & 0.0273 & 0.3142 & {1.3126}\\ 
  	 & correlation    &    -0.0063 & 0.0058 & 0.0085 & 0.6079  & {0.6950}\\ 
 	  & phase    &    0.1879 & 0.0083 & 0.1880 & .& {0.1677}\\ \midrule
 ( 0.2 , 0.8 ) & $\Omega_{1,1}$ & -0.0184 & 0.0365 & 0.0409 & 0.8883 &. \\ 
 	 & $\Omega_{1,2}$ &  -0.0860 & 0.0292 & 0.0908 & 0.4163 &.\\ 
 	 & $\Omega_{2,2}$ &  -0.1145 & 0.0352 & 0.1198 & 2.1877 &.\\ 
  	 & correlation    &    -0.0342 & 0.0083 & 0.0351 & 0.1788 &.\\ 
 	  & phase    &    0.2790 & 0.017 & 0.2795 & . &.\\ 
\bottomrule
\end{tabular}
\caption{Results for the estimation of matrices $\bTheta$ with CFW-PR(4,4) filters on ARFIMA processes. $j_0=1$ for $\bd\in\{(0.2,0.2), (0.2,0.4)\}$ and $j_0=2$ for $\bd\in\{(0.2,0.8)\}$. {PR/Fourier denotes the ratio between the RMSE given by CFW-PR(4,4) filter and the RMSE given by a Fourier-based local Whittle procedure.}}
\label{tab:arfimaG}

\end{table}

\begin{table}[!ht]
\centering
\begin{tabular}{lccccc}
\\
\toprule
 $\bd$ &  & bias & std & RMSE & C/PR \\ 
 \midrule \midrule
 ( 0.2 , 0.2 ) & $\Omega_{1,1}$ & 0.0488 & 0.175 & 0.1817 & 7.5165 \\ 
 	 & $\Omega_{1,2}$ &  0.0367 & 0.1252 & 0.1305 & 6.0138 \\ 
 	 & $\Omega_{2,2}$ &  0.0451 & 0.1685 & 0.1744 & 7.1674 \\ 
 	  & correlation    &     0.0012 & 0.0173 & 0.0173 & 2.9994 \\ 
 	  & phase    &     8e-04 & 0.0367 & 0.0367 & 4.5335\\  \midrule
 ( 0.2 , 0.4 ) & $\Omega_{1,1}$ & 0.0469 & 0.1726 & 0.1789 & 7.3547 \\ 
 	 & $\Omega_{1,2}$ &  0.0124 & 0.1198 & 0.1205 & 5.7810 \\ 
 	 & $\Omega_{2,2}$ &  -0.0086 & 0.1628 & 0.1630 & 5.9735 \\ 
 	  & correlation    &     -3e-04 & 0.0172 & 0.0172 & 2.0196 \\ 
 	  & phase    &     0.0225 & 0.0364 & 0.0428 & 0.2278\\  \midrule
 ( 0.2 , 0.8 ) & $\Omega_{1,1}$ & 0.0476 & 0.1715 & 0.178 & 4.3507 \\ 
 	 & $\Omega_{1,2}$ &  -0.0266 & 0.1156 & 0.1186 & 1.3064 \\ 
 	 & $\Omega_{2,2}$ &  -0.0973 & 0.1499 & 0.1787 & 1.4916 \\ 
 	  & correlation    &     -0.0026 & 0.0175 & 0.0177 & 0.5036 \\ 
 	  & phase    &     0.0645 & 0.0357 & 0.0737 & 0.2637 \\ 
 \bottomrule
\end{tabular}
\caption{Results for the estimation of matrices $\bTheta$ with CFW-C(4,4) filters on ARFIMA processes. $j_0=4$. C/PR denotes the ratio between the RMSE given by CFW-C(4,4) filter and the RMSE given by CFW-PR(4,4) filter in Table \ref{tab:arfimaG}.}
\label{tab:arfimaG_C}
\end{table}

It can be observed that the estimation with CFW-C(4,4) filters has a lower bias and a higher variance than the estimation with CFW-PR(4,4) filters. The higher $j_0$, the lower the bias, but the higher the variance. For the estimation of $\bd$, the main difficulty is to control the variance according to the choice of $j_0$. Since the bias for the phase estimation is critical, a higher $j_0$ for CFW-PR(4,4) filters can be considered. Moreover, as illustrated in Figure~\ref{fig:boxplots_arfima}, the quality of the approximation of the imaginary part of the correlation is not accurate for the three highest scales. Thus, to ensure a good approximation and consequently a small bias for the correlation and the phase, it seems more appropriate to remove the three first scales and to consider $j_0=4$ also for CFW-PR(M,L), even if this increases the variance.

Table \ref{tab:arfima2} and Table \ref{tab:arfimaG2} display the results  considering $j_0=4$, with CFW-PR(M,L), respectively for the long-run dependence parameter $\bd$ and for the long-run correlation and the phase. The results for the long-run covariance are omitted, for simplicity. Table \ref{tab:arfima2} shows that even if the bias of $\hat\bd$ increases, it has a similar order of magnitude. Interestingly, Table \ref{tab:arfimaG2} highlights that the quality of the estimations with CFW-C(4,4) and CFW-PR(4,4) filters are then very similar. In particular, the bias of the phase and of the correlation decrease, as shown in Table~\ref{tab:arfimaG}. With $j_0=4$, the RMSE is significantly lower than the one of the Fourier-based local Whittle estimator. This approach, hence, improves significantly the estimation of the phase compared to the other procedures. Note that it is nevertheless sensitive to the choice of $j_0$.

\begin{table}[!ht]
\centering
\begin{tabular}{lcccccc}
\toprule
 $\bd$ &  bias & std & RMSE & PR/Real & {PR/Fourier} & {C/PR}\\
 \midrule \midrule
 0.2 & -0.0020 & 0.0412 & 0.0412 & 1.1056 & {1.0000}&{1.0068}\\ 
 0.2 &   -0.0037 & 0.0419 & 0.0421 & 1.1177 & {1.5829}&{1.0121}\\   \midrule
 0.2 & -0.0029 & 0.041 & 0.0411 & 1.0477 &{1.0000 }&{ 0.9815}\\ 
 0.4 &   -0.0035 & 0.0418 & 0.0419 & 1.0560&{1.5435 }&{0.9682} \\   \midrule
 0.2 & -0.0054 & 0.0409 & 0.0412 & 0.9472 & .&{1.0173}\\ 
 0.8 &   -0.0078 & 0.0423 & 0.0430 & 0.9049 & .&{0.9855} \\  \bottomrule
\end{tabular}
\caption{Results for the estimation of long-memory parameters $\bd$ with CFW-PR(4,4) filters on ARFIMA processes. For real filter, $j_0=1$ for $\bd\in\{(0.2,0.2), (0.2,0.4)\}$ and $j_0=2$ for $\bd\in\{(0.2,0.8)\}$. For CFW-PR(4,4) filter, $j_0=4$. PR/Real denotes the ratio between the RMSE given by CFW-PR(4,4) filter and the RMSE given by Daubechies' real filter. PR/Fourier denotes the ratio between the RMSE given by CFW-PR(4,4) filter and the RMSE given by a Fourier-based local Whittle procedure. C/PR denotes the ratio between the RMSE given by CFW-C(4,4) filter in Table \ref{tab:arfima_C} and the RMSE given by CFW-PR(4,4) filter.}
\label{tab:arfima2}
\end{table}

\begin{table}[!ht]
\centering
\begin{tabular}{lcccccc}
\\
\toprule
 $\bd$ &  & bias & std & RMSE & {PR/Fourier} & {C/PR}\\ 
 \midrule \midrule
 ( 0.2 , 0.2 )    	 & correlation    &    8e-04 & 0.0166 & 0.0166 & {0.9944} & {1.0477}\\ 
 	  & phase    &    0 & 0.0340 & 0.0340 & {1.5912} &{1.0346}\\   \midrule
 ( 0.2 , 0.4 )  
 	  	 & correlation    &    2e-04 & 0.0164 & 0.0164 &{ 0.9745} &{1.0362}\\ 
 	  & phase    &    0.0241 & 0.0345 & 0.0421 & {0.1320} &{0.9842} \\ 
  \midrule
 ( 0.2 , 0.8 )
  	 & correlation    &    -0.0033 & 0.0169 & 0.0172 & . &{1.0291}\\ 
 	  & phase    &    0.0654 & 0.0341 & 0.0738 & . &{0.9995}\\ 
 \bottomrule
\end{tabular}
\caption{Results for the estimation of matrices $\bTheta$ with CFW-PR(4,4) filters on ARFIMA processes with $j_0=4$. {PR/Fourier denotes the ratio between the RMSE given by CFW-PR(4,4) filter and the RMSE given by a Fourier-based local Whittle procedure.  C/PR denotes the ratio between the RMSE given by CFW-C(4,4) filter in Table \ref{tab:arfimaG_C} and the RMSE given by CFW-PR(4,4) filter.}}
\label{tab:arfimaG2}
\end{table}

\subsection{Multivariate fractional Brownian motions}

We now consider a multivariate fractional Brownian motion (mFBM). Since mFBM are not stationary, Fourier-based estimation is not available (without a differentiation). A specificity of mFBM is that it does not have a linear representation, even if it can be seen as the limit process of a linear representation, see \cite{mfbm-linear}.

The $p$-multivariate fractional Brownian motion ${\bX=\{\bX(t),\,t\in\R\}}$ of long-memory parameter $\bd$, for any $\bd\in(0.5,1.5)^p$ is a process satisfying the three following properties:
\begin{itemize}
\item $\bX(t)$ is Gaussian for any $t\in\R$;
\item $\bX$ is self-similar with parameter $\bd-1/2$, {\it i.e.} for every $t\in\R$ and $a>0$, $(X_{1}(at),\ldots, X_{p}(at))$ has the same distribution as $(a^{d_1-1/2}X_{1}(t),\ldots, a^{d_p-1/2}X_{p}(t))$;
\item the increments are stationary.
\end{itemize}
Another usual parametrization is the one with Hurst parameters, equal to $\bd-1/2$.

We introduce the following quantities, for $1\leq \ell, m\leq p$: \begin{align*}
\sigma_\ell&=\E[X_\ell(1)^2]^{1/2}\\
r_{\ell,m}&=r_{m,\ell}= \mbox{Cor}(X_\ell(1),X_m(-1))\\
\eta_{\ell,m} &= -\eta_{m,\ell}=(\mbox{Cor}(X_\ell(1),X_m(-1))-\mbox{Cor}(X_\ell(-1),X_m(1))) /c_{\ell,m} \\ & \qquad\text{~with~} c_{\ell,m}=\begin{cases}2(1-2^{d_\ell+d_m-1})
&\text{~~if~}d_\ell+d_m\neq 1,\\
2\log(2) &\text{~~if~} d_\ell+d_m=1,\end{cases}
\end{align*}
where $\mbox{Cor}(X_1,X_2)$ denotes the Pearson correlation between variables $X_1$ and $X_2$.
The quantities $(\eta_{\ell,m})_{\ell,m=1,\dots,p}$ measure the dissymmetry of the process. A mFBM is time reversible if the distribution of $\bX(-t)$ is equal to the distribution of $\bX(t)$ for every~$t$. \cite{DidierPipiras} established that zero-mean multivariate Gaussian
stationary processes $\bX$ is equivalent to $\E[\bX_\ell(t)\bX_m(s)] = \E[\bX_\ell(s)\bX_m(t)]$ for all $(s,t)$, which corresponds to the definition of time reversibility used in \cite{KechagiasPipiras}. A mFBM is time-reversible if and only if $\eta_{\ell,m}=0$ for all $(\ell,m)$.

\cite{mFBM13} characterize the spectral behaviour of the increments of a mFBM.  If $f_{\ell,m}^{(1,1)}{(.)}$ denotes the cross-spectral density of {$\{(1-\L) X_{\ell}(t), (1-\L)X_{m}(t)), t\in\Z\}$}, then $$f_{\ell,m}^{(1,1)}(\lambda)=2\,\Omega_{\ell,m}\frac{1-\cos(\lambda)}{|\lambda|^{d_\ell+d_m}}e^{i\phi_{\ell,m}},$$ with \begin{align*}\Omega_{\ell,m}&=\begin{cases}
{\sigma_\ell\sigma_m}\Gamma(d_\ell+d_m)\left(r_{\ell,m}^2\cos^2(\frac{\pi}{2}(d_\ell+d_m))+\eta_{\ell,m}^2\sin^2(\frac{\pi}{2}(d_\ell+d_m)) \right)^{1/2} & \text{~~if~}d_\ell+d_m\neq 2\\
{\sigma_\ell\sigma_m}\Gamma(d_\ell+d_m)\left( r_{\ell,m}^2+\eta_{\ell,m}^2\frac{\pi^2}{4} \right)^{1/2} & \text{~~if~}d_\ell+d_m= 2
\end{cases}\\
\phi_{\ell,m}&=\begin{cases}
\mbox{atan}\left(\frac{\eta_{\ell,m}}{r_{\ell,m}}\tan(\frac{\pi}{2}(d_\ell+d_m))\right)&\text{~~if~}d_\ell+d_m\neq 2\\
\mbox{atan}\left(\frac{\eta_{\ell,m}}{r_{\ell,m}}\frac{\pi}{2}\right) & \text{~~if~}d_\ell+d_m= 2.
\end{cases}
\end{align*}
Let $\bTheta$ be given by $\bTheta=(\Omega_{\ell,m}e^{i\phi_{\ell,m}})_{\ell,m=1,\dots,p}$. When $\lambda$ tends to $0$, the spectral density $f_{\ell,m}^{(1,1)}(\lambda)$ is equivalent to $\Theta_{\ell,m}|\lambda|^{-(d_\ell+d_m-2)}$.
Thus, assumption~\ref{ass:zero-frequency} holds. Assumption \ref{ass:beta} is satisfied for any $0<\beta<2$. We can verify easily that time-reversibility is still equivalent to $\phi_{\ell,m}=0$ in this setting.

Note that the set of parameters $\{d_\ell, \sigma_\ell, r_{\ell,m}, \eta_{\ell,m}, \ell,m=1,\dots,p\}$ is not identifiable. Indeed, for $0<a<1$, $\{d_\ell, \sigma_\ell, r_{\ell,m}, \eta_{\ell,m}, \ell,m=1,\dots,p\}$ and $\{d_\ell, \sqrt{a}\,\sigma_\ell, r_{\ell,m}/a, \eta_{\ell,m}/a, \ell,m=1,\dots,p\}$ lead to the same expressions of $f_{\ell,m}^{(1,1)}(\cdot)$. It thus seems reasonable to parameterize the fractional Brownian motion by $\{d_\ell, \Theta_{\ell,m},\; \ell,m=1,\dots,p\}$.

We consider two mFBM, both with parameters $\sigma_1=\sigma_2=1$ and $\bd = (1, 1.2)$. 
\begin{description}
\item[Case 1.] $\eta_{1,2}= 0.9$, $r_{1,2}=0.6$.~\\ The phase $\phi_{1,2}$ is approximately equal to $\pi/7$ and $\bOmega\simeq\begin{pmatrix} 1.000 & 0.699 \\
 0.699. &1.005\end{pmatrix}
$, giving a long-run correlation $\rho\simeq0.70$.

\item[Case 2.] $\eta_{1,2}= -0.6$, $r_{1,2}=0.2$.~\\  The phase $\phi_{1,2}$ is approximately equal to $-\pi/4$ and $\bOmega\simeq\begin{pmatrix} 1.000 & 0.293 \\
 0.293& 1.005\end{pmatrix}$, giving a long-run correlation $\rho\simeq0.29$.
\end{description}
Simulations were done using R functions provided by J-F Coeurjolly at \url{https://sites.google.com/site/homepagejfc/software}.

Figure~\ref{fig:boxplot_mfbm} represents the boxplots of CFW-PR(4,4) wavelet correlations at different scales in Case 1 and in Case 2. The good behavior of the approximation is observed except for the highest frequencies. Identical observations are obtained for CFW-C(4,4) filters (figure not provided).  

We now consider the local Whittle estimation of the parameters. Based on the discussion of Section~\ref{sec:arfima}, and on Figure \ref{fig:boxplot_mfbm}, we fix $j_0=4$. Table~\ref{tab:d_mfbm} and Table~\ref{tab:d_mfbm_C} highlight the good behavior of the estimation of long-memory parameters $\bd$, respectively for CFW-PR(4,4) and CFW-C(4,4) filters. Again, considering $j_0=4$ for both filters, the estimation procedures are equivalent for the two common-factor wavelets. Compared to the real wavelet-based estimation (with $j_0=2$ as suggested by \cite{multiwave}), the RMSE increases. This is mainly due to the choice of the hyperparameter $j_0$.

\begin{figure}[!ht]
\begin{subfigure}[t]{0.45\textwidth}
\caption{Case 1}
\includegraphics[width=\textwidth]{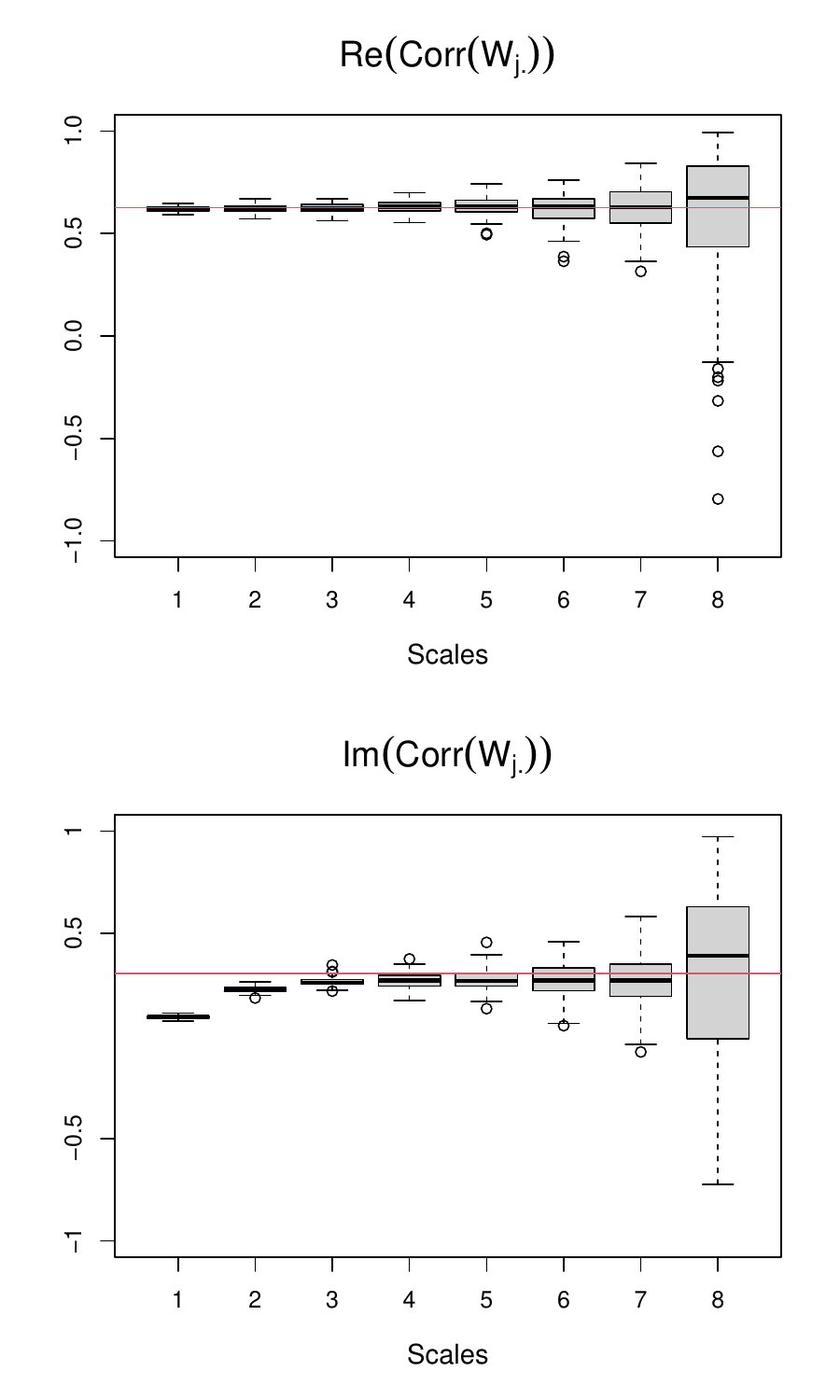}
\end{subfigure}
\begin{subfigure}[t]{0.45\textwidth}
\caption{Case 2}
\includegraphics[width=\textwidth]{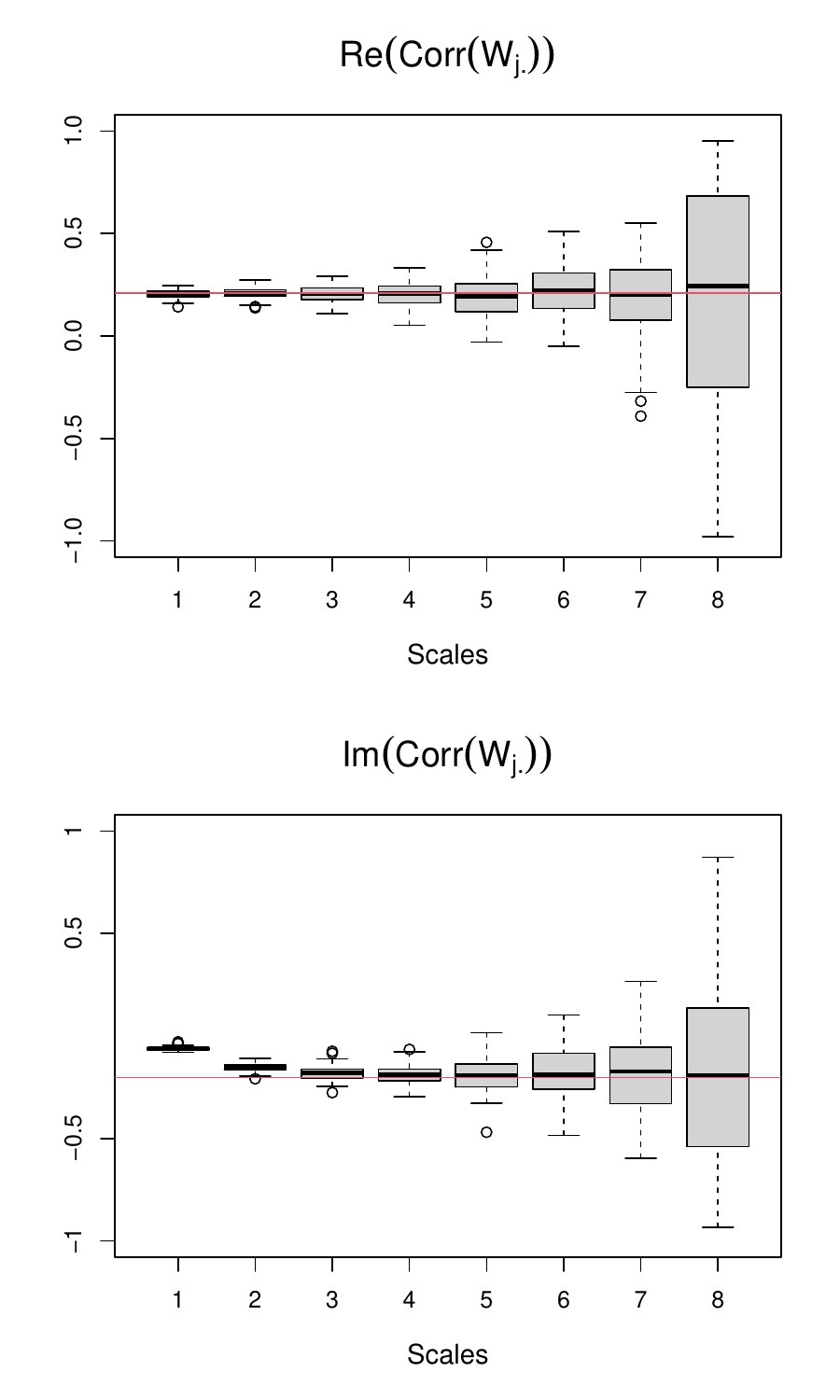}
\end{subfigure}
\caption{Boxplots of correlation between CFW-PR(4,4) coefficients at different scales for the simulated mFBM in Case 1 (left column--(a)) and in Case 2 (right column--(b)). First row gives the real part of the correlations and second row gives the imaginary part. Horizontal red lines correspond to the approximation given by Proposition~\ref{prop:covY3}, that is, $\rho\cos(\phi)r_K$ for the real part and  $\rho\sin(\phi)r_K$ for the imaginary part, with $r_K=K(d_1+d_2)/\sqrt{K(2\,d_1)K(2\,d_2)}$.}
\label{fig:boxplot_mfbm}
\end{figure}

\begin{table}[!ht]
\centering
\begin{tabular}{llcccc}
\\
\toprule
 &$\bd$ &  bias & std & RMSE & ratio PR/Real \\
 \midrule \midrule
Case 1 &  1  & -0.0065 & 0.0464 & 0.0469 & 1.5577 \\ 
& 1.2 &   -0.0059 & 0.0475 & 0.0478 & 2.0886 \\   \midrule
Case 2 & 1 & -0.0051 & 0.0510 & 0.0513 & 1.6812 \\ 
& 1.2 &   -0.0035 & 0.0515 & 0.0516 & 1.9884 \\ 
  \bottomrule
\end{tabular}
\caption{Results for the estimation of long-memory parameters $\bd$ with CFW-PR(4,4) filter on mFBMs. Hyperparameter $j_0$ satisfies $j_0=4$ for CFW-PR(4,4) and $j_0=2$ for real filters. PR/Real denotes the ratio between the RMSE given by CFW-PR(4,4) filter and the RMSE given by Daubechies' real filter.}
\label{tab:d_mfbm}
\end{table}

\begin{table}[!ht]
\centering
\begin{tabular}{llcccc}
\\
\toprule
& $\bd$ &  bias & std & RMSE & ratio C/PR \\
 \midrule \midrule
Case 1 & 1 & -0.0155 & 0.0409 & 0.0437 & 0.9323 \\ 
& 1.2 &   -0.0133 & 0.0402 & 0.0423 & 0.8849 \\ \midrule
Case 2 &   1 & -0.0177 & 0.0448 & 0.0482 & 0.9397 \\ 
& 1.2 &   -0.0116 & 0.0473 & 0.0487 & 0.9441 \\    \bottomrule
\end{tabular}
\caption{Results for the estimation of long-memory parameters $\bd$ with CFW-C(4,4) filter on mFBMs. Hyperparameter $j_0$ satisfies $j_0=4$. C/PR denotes the ratio between the RMSE given by CFW-C(4,4) filter and the RMSE given by CFW-PR(4,4) filter.}
\label{tab:d_mfbm_C}
\end{table}

Table~\ref{tab:G_mfbm} and Table~\ref{tab:G_mfbm_C} give the results obtained for the estimation of the covariance structure, that is, ${\bOmega}$, $\rho$ and $\phi$. It is not possible to compare our results with alternative non parametric procedures because real wavelet-based procedure estimates the real part of the long-run covariance or of the correlation, and Fourier-based estimations are not valid for non-stationary time series. 

The results of CFW-PR(4,4) and CFW-C(4,4) are similar. We observe a high bias and a high standard deviation for the estimation of ${\bOmega}$. On the other hand, we observe a good quality for the estimation of $\rho$ and of $\phi$.

\begin{table}[!ht]
\centering
\begin{tabular}{lccccc}
\\
\toprule
 &&   bias & std & RMSE  \\ 
 \midrule
Case 1 &  $\Omega_{1,1}$ & -0.1999 & 0.1544 & 0.2526\\ 
 	 & $\Omega_{1,2}$ &  -0.1592 & 0.0925 & 0.1841 \\ 
 	 & $\Omega_{2,2}$ &  -0.2471 & 0.1504 & 0.2892  \\ 
  	 & correlation    &    0.0971 & 0.0243 & 0.1001  \\ 
 	  & phase    &    0.0039 & 0.0526 & 0.0528  \\\midrule
Case 2 &$\Omega_{1,1}$ & -0.2048 & 0.1654 & 0.2633  \\ 
 	 & $\Omega_{1,2}$ &  -0.0647 & 0.0501 & 0.0818 \\ 
 	 & $\Omega_{2,2}$ &  -0.2508 & 0.1588 & 0.2969 \\ 
  	 & correlation    &    0.0967 & 0.0434 & 0.1059 \\ 
 	  & phase    &    -0.0087 & 0.1549 & 0.1551   \\ 
 \bottomrule
\end{tabular}
\caption{ Results for the estimation of matrices $\bTheta$ with CFW-PR(4,4) filter on mFBMs. Hyperparameter $j_0$ satisfies $j_0=4$.}
\label{tab:G_mfbm}
\end{table}

\begin{table}[!ht]
\centering
\begin{tabular}{lccccc}
\\
\toprule
& &   bias & std & RMSE & ratio C/PR \\ 
 \midrule
Case 1 &  $\Omega_{1,1}$ & -0.1548 & 0.1501 & 0.2156 & 0.8537 \\ 
 	 & $\Omega_{1,2}$ &  -0.1305 & 0.0865 & 0.1565 & 0.8501 \\ 
 	 & $\Omega_{2,2}$ &  -0.2088 & 0.1365 & 0.2495 & 0.8625 \\ 
 	  & correlation    &     0.0965 & 0.0245 & 0.0995 & 0.9940 \\ 
 	  & phase    &     0.0043 & 0.0493 & 0.0495 & 0.9376 \\ 
 \midrule
   Case 2  &  $\Omega_{1,1}$ & -0.1427 & 0.1632 & 0.2168 & 0.8235 \\ 
 	 & $\Omega_{1,2}$ &  -0.052 & 0.0527 & 0.074 & 0.9043 \\ 
 	 & $\Omega_{2,2}$ &  -0.2107 & 0.1596 & 0.2643 & 0.8905 \\ 
 	  & correlation    &     0.0936 & 0.045 & 0.1039 & 0.9805 \\ 
 	  & phase    &     -0.0061 & 0.156 & 0.1561 & 1.0064 \\ 
\bottomrule
\end{tabular}
\caption{ Results for the estimation of matrices $\bTheta$ with CFW-C(4,4) filter on mFBMs. Hyperparameter $j_0$ satisfies $j_0=4$. C/PR denotes the ratio between the RMSE given by CFW-C(4,4) filter and the RMSE given by CFW-PR(4,4) filter.}
\label{tab:G_mfbm_C}
\end{table}

To conclude, no major difference are observed between CFW-PR and CFW-C filters. As theoretical results are also available for CFW-C filters, it seems preferable to use them in practice.

\section{Application on a neuroscience dataset}

\label{sec:real}

We have applied our framework on fMRI data acquired on rats.  We consider functional Magnetic Resonance images (fMRI) of dead and live rats. Our aim is to estimate the brain connectivity, that is, the significant correlations between brain regions where fMRI signals are recorded. For this data set, we know that for dead rats the recordings are just noise, as no legitimate functional activity should be detected. Thus, the estimated graphs should be empty. We also expect non-empty graphs for live rats under anesthetic, as brain activity keeps on during anesthesia. The dataset is freely available at \href{https://zenodo.org/record/2452871}{https://10.5281/zenodo.2452871} \citep{becq_10.1088/1741-2552/ab9fec,guillaume2020functional}.

\subsection{Description of the dataset}

Functional Magnetic Resonance Images (fMRI) were acquired for dead and live rats (the full description is available in  \cite{guillaume2020functional}). 25 rats were scanned and identified in 4 different groups: DEAD, ETO\_L, ISO\_W and MED\_L. The first group contain dead rats and the three last groups correspond to different anesthetics. The duration of the scan was 30 minutes with a time repetition of 0.5 second so that $N=3600$ time points were available at the end of experience. After preprocessing as explained in \cite{guillaume2020functional}, $p=51$ time series for each rat were extracted. Each time series captures the functioning of a given region of the rat brain based on an anatomical atlas.

For each rat, we compute the estimators of 
\begin{itemize}[topsep=-10pt,itemsep=-5pt]
\item the vector of long-memory parameters, $\hat\bd$, 
\item the magnitude of the correlations, $\hat\brho=\{\hat\rho_{\ell,m},\, 1\leq \ell<m\leq p\}$ with $\hat\rho_{\ell,m}=\frac{{\hat\Omega_{\ell,m}}}{\sqrt{\hat\Omega_{\ell,\ell}\hat\Omega_{m,m}}}$,
\item the phases, $\hat\bphi=\{\hat\phi_{\ell,m},\, 1\leq \ell<m\leq p\}$.
\end{itemize}
Estimation was done with CFW-PR(4,4) filters. Densities of the estimators are represented on the figures using \texttt{R} default kernel-based estimation.

\subsection{Results and group comparisons}

Figure \ref{fig:fmri.d} shows the empirical distribution of the estimated empirical estimators $\hat \bd$. As expected, the long-memory parameters for dead rats are close to zero. The distributions are centered around zero, with a Gaussian-like shape. For rats under anesthetics, the densities are not centered around zero and the variance between brain regions is higher than what is observed for dead rats. Long-memories for rats under anesthetic ISO\_W are higher than under other anesthetics.

\begin{figure}[!ht]
\centering
{\includegraphics[width=\textwidth]{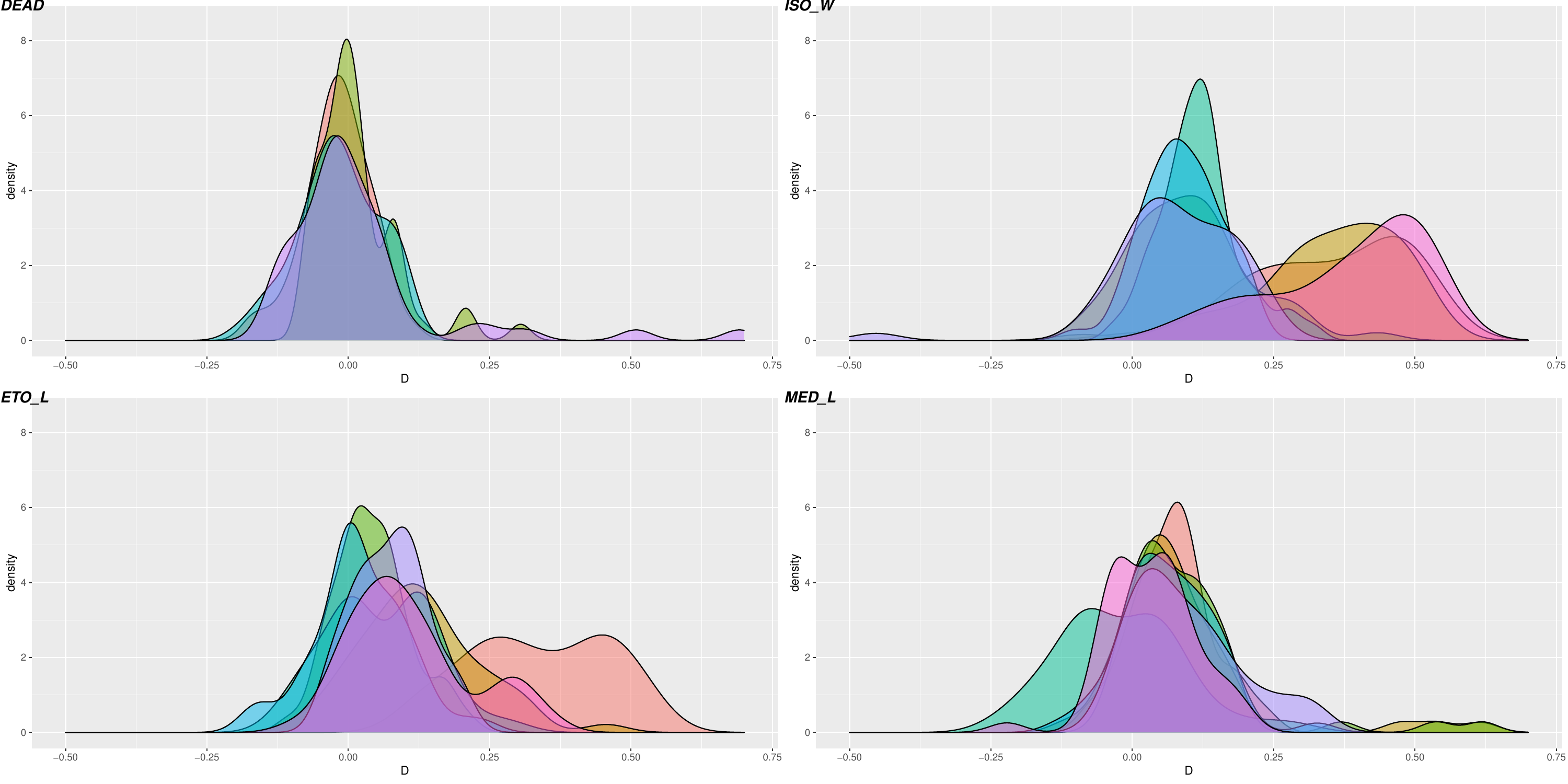}}
\caption{Plot of the empirical distribution of the long memory parameters $\hat\bd$ obtained for the 4 groups of rats. Each color corresponds to a rat.}
\label{fig:fmri.d}
\end{figure}

The distributions of the magnitudes and the phases of the estimated correlations, $\rho$ and $\phi$, for each rats, are shown respectively in Figure~\ref{fig:fmri.corr} and Figure~\ref{fig:fmri.phase}. First, as expected, the magnitudes obtained for the dead rats seem significantly different from those of the live rats. For dead rats, distributions have a small support, that is, only 9 on the 5100 values (0.18\%) satisfy $\hat \rho>0.3$. Note also that no major differences are observed between the rats. Next, ISO\_W and ETO\_L present quite similar distributions, with possibly high magnitudes. By contrast, the correlations for MED\_L anesthetic  are lower. These results tend to show that MED\_L anesthetic is more potent than the other anesthetics, leading to fewer connections between brain regions.

 First of all, as expected, the quantities obtained for the dead rats appear significantly different from those of the living rats. For dead rats, the distributions have a small support, that is, only 9 values out of 5100 (0.18\%) satisfy $\hat \rho>0.3$. Also note that no major differences are observed between the rats. Then, ISO\_W and ETO\_L show quite similar distributions, with possibly high magnitudes. On the other hand, the correlations for the anesthetic MED\_L are weaker. These results tend to show that the anesthetic MED\_L is more potent than other anesthetics, resulting in fewer connections between brain regions.

The phase parameter can be interpreted as an asymmetry of the coupling at large lags among the components of the signals for each brain region (a null phase is equivalent to time-reversibility). The distributions displayed in Figure~\ref{fig:fmri.phase} correspond to the empirical densities of the upper triangular matrices of phases, $\{\phi_{\ell,m},\,1\leq\ell<m\leq p\}$. This explains why the distributions are not symmetric.

\begin{figure}[!ht]
\centering
{\includegraphics[width=\textwidth]{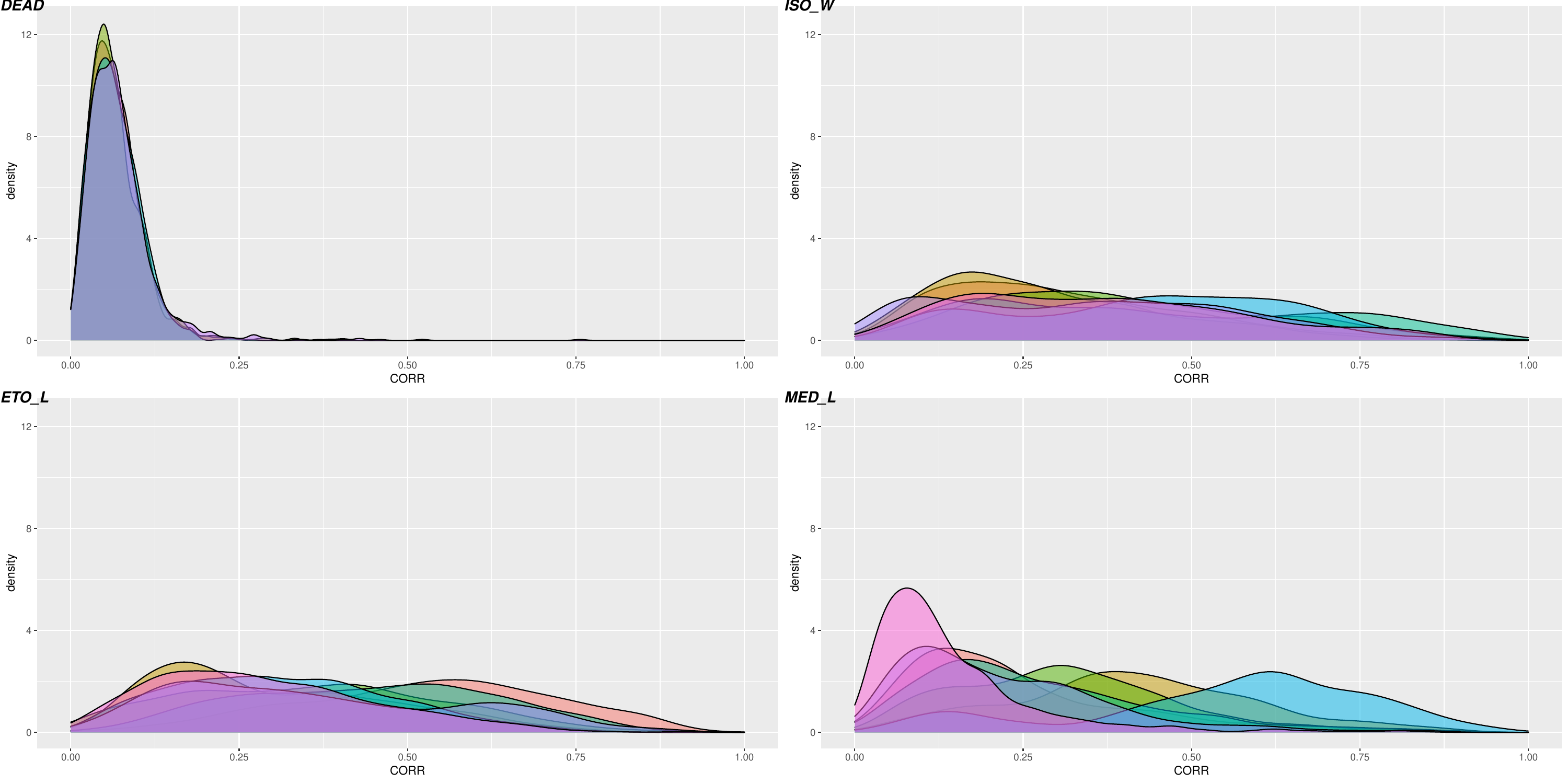}}
\caption{Plot of the empirical distribution of the correlation magnitudes $\hat\brho$ obtained for the 4 groups of rats. Each color corresponds to a rat.}
\label{fig:fmri.corr}
\end{figure}

For dead rats, we observe mainly uniform distributions. For live rats,  Figure~\ref{fig:fmri.phase} shows that the distributions have heavy tails. The tails are heavier for MED\_L anesthetic than for other anesthetics. This can be explained by the fact that the phase is non-informative when the magnitude is close to zero. As indicated previously, this problem of identifiability occurs for example in the case of fractional co-integration. Following \cite{Baek2020} another parametrization could be proposed to overcome it. The parametrization chosen here, nevertheless, seems more appropriate since the magnitude is crucial in this real data application.

To illustrate this fact, Figure~\ref{fig:fmri.corr_phase} shows the distributions of the estimated phases $\phi$ corresponding to magnitudes satisfying $\rho>0.3$ (this choice is motivated by the observation on the support of dead rats' correlations above). The distributions then have smaller tails. It can be observed that the supports of the phases are larger for live rats than for dead rats. Next, the 95\%-quantiles of absolute values (\emph{i.e.} $q$ such that 95\% of absolute values of phases are lower than $q$) are respectively 2.95, 1.90, 1.89, 1.61 for dead rats, ISO\_W, ETO\_L and MED\_L. It seems that ISO\_W has a higher support, meaning that shifts appear in the connections between brain regions, with respect to other anesthetics. Yet, we have not tested whether the difference is significant.

\begin{figure}[p]
\centering
{\includegraphics[width=\textwidth]{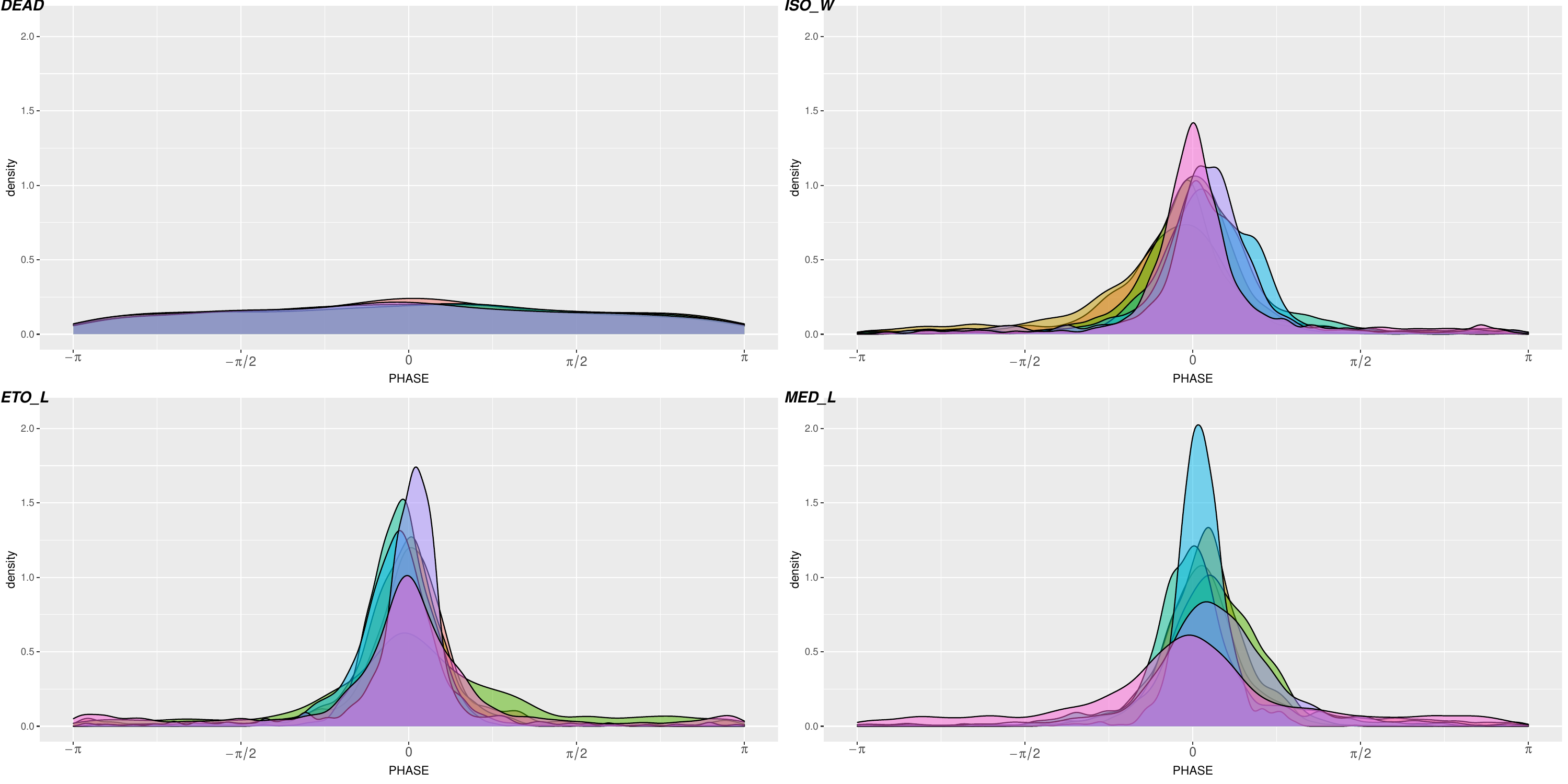}}
\caption{Plot of the empirical distribution of the phases $\hat\bphi$ obtained for the 4 groups of rats without thresholding the correlations.  Each color corresponds to a rat.}
\label{fig:fmri.phase}

{\includegraphics[width=\textwidth]{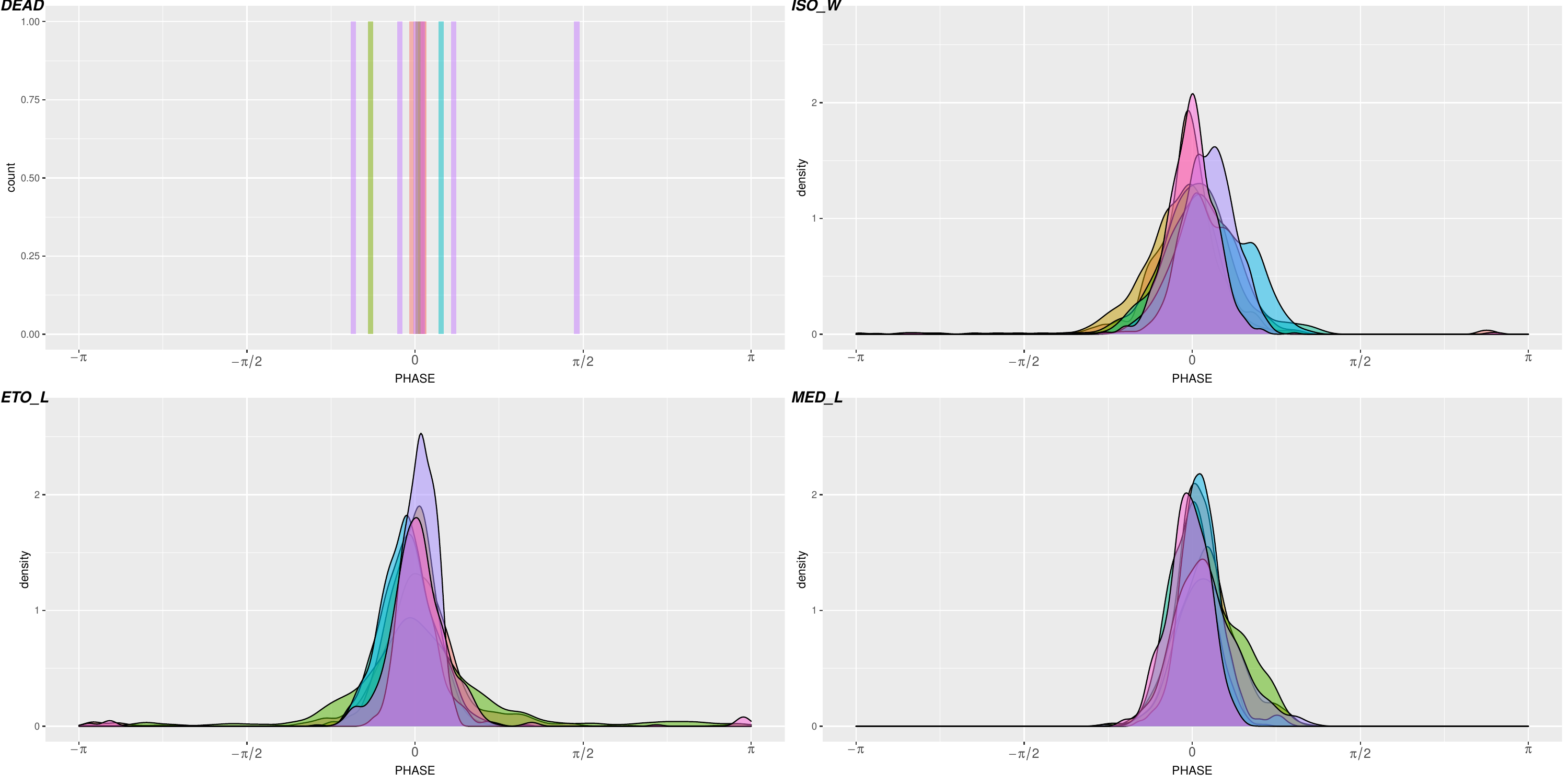}}
\caption{Plot of the empirical distribution of the phases obtained for the 4 groups of rats after first thresholding the correlations. Only phases associated to correlations with a magnitude higher than 0.3 are considered.  Each color corresponds to a rat. For dead rats, a bar plot is provided rather than a density plot due to the low number of values.}
\label{fig:fmri.corr_phase}
\end{figure}

\subsection{Graphs with correlations and phases}

We first compute the adjacency matrix obtained for each rat within each group. Edges correspond to a magnitude higher than 0.3. The value of the threshold is motivated by the observation of the supports obtained for dead rats. We then select the edges which are present in all the graphs of the rats of the group. One graph is then obtained per group. For each group, we then compute the mean of the estimated phase for each detected edge. Figure \ref{fig:fmri.graphs2} illustrates the graphs obtained for the 4 different groups.

We have colored each edge according to the mean phase when it satisfies $|\phi_{\ell,m}|>1.1\lvert\phi_{\ell,m}^*\rvert$ where $\phi_{\ell,m}^*=-\frac{\pi}{2}(d_\ell-d_m)$, $({\ell,m})\in\{1, \dots, p\}^2$. The value $\phi_{\ell,m}^*$ corresponds to the phase of causal linear  representations with power-law coefficients \citep{KechagiasPipiras} and to the ARFIMA modeling used in \cite{AchardGannaz} with similar data. The more the edges are colored, the more the behavior of the phase differs from the preceding modeling.

The DEAD group has indeed no edges. The MED\_L group has fewer edges than the two other groups of anesthetic. It hence seems that MED\_L anesthetic inhibits more the activity. Next ETO\_L group and ISO\_W group have a similar number of edges (respectively 133 and 145), but the phases differ. More than half of the mean phases are outside the interval $[-1.1\lvert\phi^*\rvert, 1.1\lvert\phi^*\rvert]$ for ETO\_L and ISO\_W groups, with similar proportions. This observation is interesting because it illustrates that the modeling of these data is complex. The introduction of a general phase enables to take this complexity into account. Concerning the physical interpretation, no easy conclusion can be given. As it was mentioned in \cite{buxton2013physics}, the time scale of BOLD (Blood oxygenation level dependent) response is very small in comparison with the neuronal activity. The observed delay is equal to a few seconds. Considering the different time scales involved in the production of the BOLD response, we may hypothesize that lags are not the underlying phenomenon that produces phase differences in fMRI signals. However, as stated in \cite{buxton2013physics}, the time scale can vary in the same subject depending on the physiological baseline state, which is known to be modified under anesthesia.

\begin{figure}[p]
  \centering
    \includegraphics[width=0.9\textwidth]{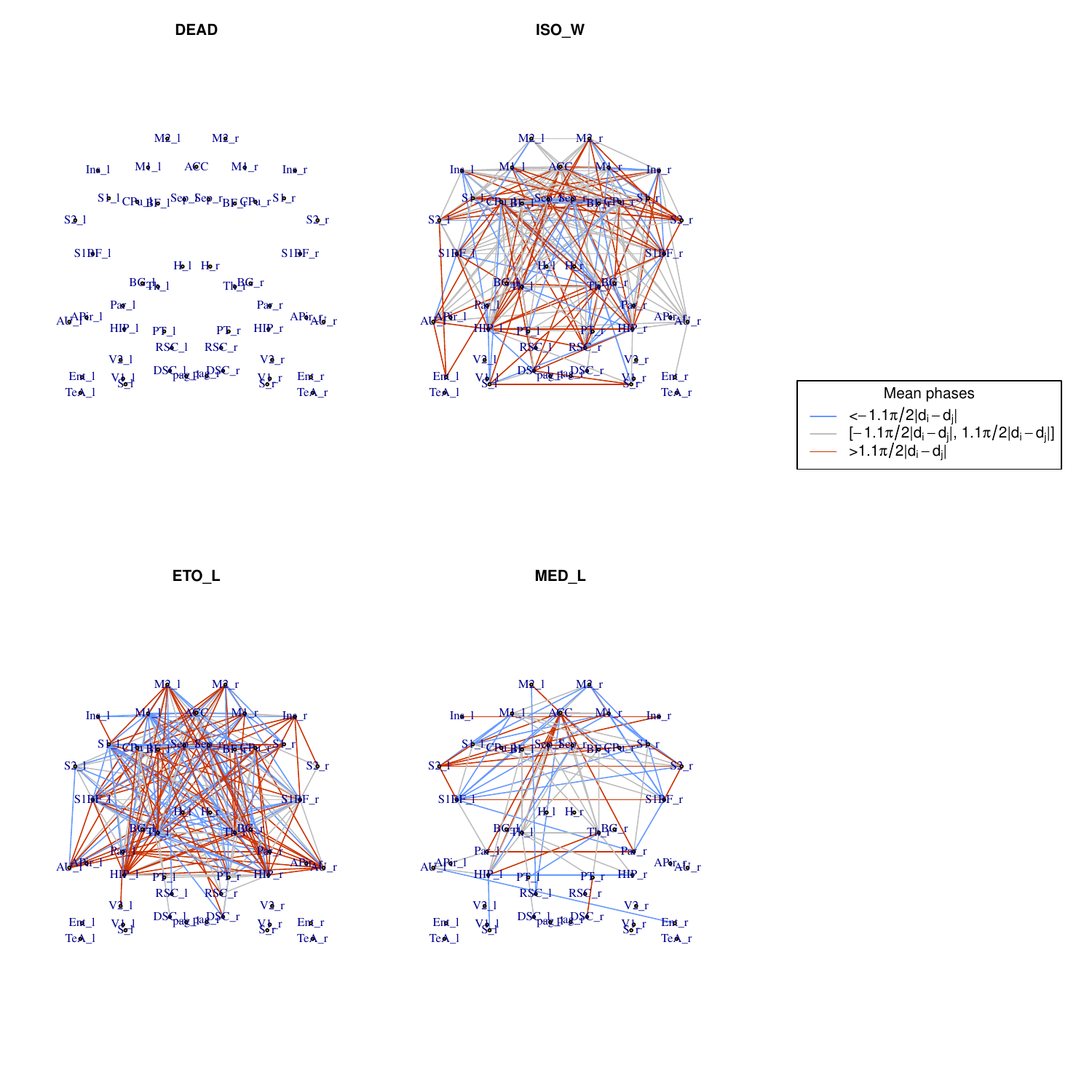} 
\caption{Plot of the average graphs with correlations and phases obtained for 4 groups of rats: DEAD, ISO\_W, ETO\_L and MED\_L. Only edges corresponding to a mean correlation's magnitude higher than 0.3 are displayed. Red edges correspond to positive mean phases higher than 1.1$\lvert\phi^*\rvert$, blue edges correspond to negative mean phases lower than -1.1$\lvert\phi^*\rvert$, and grey edges to mean phases between -1.1$\lvert\phi^*\rvert$ and 1.1$\lvert\phi^*\rvert$. The quantities $\phi^*$ are equal to $\phi_{\ell,m}^*=-\frac{\pi}{2}(d_\ell-d_m)$.} 
\label{fig:fmri.graphs2}
\end{figure}

\section{Conclusion}

This work was motivated by an application in neuroscience, namely the inference of fractal connectivity from fMRI recordings. We have studied the local Whittle estimators for multivariate time series presenting long-memory. Our modeling allows for a complex covariance structure with phase components that can be interpreted as shifts in the coupling between time series. We have introduced quasi-analytic wavelet filters to handle the possible non-stationarity in the real data application. The resulting procedures offer a consistent estimation of the main parameters of the model. Indeed, we have established that so called \emph{Common-Factor wavelets} are an efficient tool for recovering the long-memory structure as well as the covariance structure, including magnitude and phase. A simulation study on linear processes and on multivariate Brownian motions illustrates the good performance of the proposed procedure. The real data application highlights the ability of the procedure to distinguish dead rats from live rats. We also show the differences between three anesthetics and the fact that one of them slows down brain activity more intensively.

\subsection*{Ackowledgments} The authors are indebted to Marianne Clausel and Fran\c ois Roueff who contributed substantially to this work. We are grateful to the reviewers and to the co-editor who help to significantly improve the paper. We also thank Jean-Fran\c cois Coeurjolly for providing the code for the simulations of multivariate fractional Brownian motions. We also would like to thank Emmanuel Barbier and Guillaume Becq for providing us the data of the resting state fMRI on the rats.

\appendix

\section{Expression of the CFW-C(M,L) filter}
This section aims at giving the expression of the CFW-C(M,L) filters which are used to compute the wavelet coefficients. Let first recalls the expression of common-factor wavelets.

\subsection{Expression of the CFW-C(M,L) pair}

Let us recall the expression of the low-pass filter $\widehat h^{(L)}(.)$ and the high-pass filter $\widehat{h}^{(H)}(.)$:
\begin{equation} \label{eqn:h}
\widehat{h}^{(L)}(\lambda)= \sqrt{2}\,
\left(\frac{1+e^{-i\lambda}}{2}\right)^M\hat q_{L,M}(\lambda)\,\widehat{d}_L(\lambda)
\text{   ~~and~~   }
\widehat{h}^{(H)}(\lambda)=\overline{\widehat{h}^{(L)}(\lambda+\pi)}e^{-i\lambda}\,,
\end{equation}
for all $\lambda\in\R$.
All the same, for all $\lambda\in\R$,
\begin{equation} \label{eqn:g}
\widehat{g}^{(L)}(\lambda)= \sqrt{2}\,
\left(\frac{1+e^{-i\lambda}}{2}\right)^M\hat q_{L,M}(\lambda)\,\overline{\widehat{d}_L(\lambda)} e^{-i\lambda L}
\text{   ~~and~~   }
\widehat{g}^{(H)}(\lambda)=\overline{\widehat{g}^{(L)}(\lambda+\pi)}e^{-i\lambda}.
\end{equation}
Let us now explicit $\hat\varphi(\cdot)$. We have $\hat\varphi(\cdot)=\hat\varphi_h(\cdot)+\rmi\hat\varphi_g(\cdot)$ with
\begin{align*}  
\hat \varphi_h(\lambda)
&=  2^{-1/2}\prod_{j=1}^\infty 2^{-1/2} \hat h^{(L)}(2^{-j}\lambda)=2^{-1/2}\prod_{j=1}^\infty \bigl(\frac{1+\rme^{-\rmi 2^{-j}\lambda}}{2}\bigr)^M{\hat q_{L,M}(2^{-j}\lambda)\hat d_L(2^{-j}\lambda)},\\
\text{and~~}
\hat \varphi_g(\lambda)
&= 2^{-1/2}\prod_{j=1}^\infty \bigl(\frac{1+\rme^{-\rmi2^{-j}\lambda}}{2}\bigr)^M\hat q_{L,M}(2^{-j}\lambda)\overline{\hat d_L(2^{-j}\lambda)}\rme^{-\rmi 2^{-j}\lambda L}.
\end{align*}
When $\hat q(.)=1$, the expressions above become
\begin{align}
\label{eqn:phih1}\hat \varphi_h(\lambda)&=\prod_{j=1}^\infty \bigl(\frac{1+\rme^{-\rmi2^{-j}\lambda}}{2}\bigr)^M\overline{\hat d_L(2^{-j}\lambda)},\\
\label{eqn:phig1}\hat \varphi_g(t)&=\prod_{j=1}^\infty \bigl(\frac{1+\rme^{-\rmi2^{-j}\lambda}}{2}\bigr)^M\overline{\hat d_L(2^{-j}\lambda)}\rme^{-\rmi 2^{-j}\lambda L}.
\end{align}

Next, we can explicit $\hat\psi(\cdot)$. We have $\hat\psi(\cdot)=\hat\psi_h(\cdot)+\rmi\hat\psi_g(\cdot)$ with
\begin{align*}
\lefteqn{\hat \psi_h(\lambda)}\\
&= 2^{-3/2}\hat h^{(H)}(\lambda/2)\prod_{j=2}^\infty 2^{-1/2} \hat h^{(L)}(2^{-j}\lambda)\\
&= 2^{-1}\Bigl(\frac{1-\rme^{\rmi\lambda/2}}{2}\Bigr)^M\,\overline{\hat q_{L,M}(\lambda/2+\pi)}\overline{\hat d_L(\lambda/2+\pi)}\rme^{-\rmi\lambda/2}{\prod_{j=2}^\infty \bigl(\frac{1+\rme^{-\rmi 2^{-j}\lambda}}{2}\bigr)^M{\hat q_{L,M}(2^{-j}\lambda)\hat d_L(2^{-j}\lambda)},}
\end{align*}
and
\begin{multline*}
\hat \psi_g(\lambda)
=- 2^{-1/2}\Bigl(\frac{1-\rme^{-\rmi\lambda/2}}{2}\Bigr)^M\,\overline{\hat q_{L,M}(\lambda/2+\pi)}\,\hat d_L(\lambda/2+\pi)\rme^{+\rmi(\lambda/2 +\pi)L}\rme^{-\rmi\lambda/2}\\
{\prod_{j=2}^\infty \bigl(\frac{1+\rme^{-\rmi2^{-j}\lambda}}{2}\bigr)^M\hat q_{L,M}(2^{-j}\lambda)\overline{\hat d_L(2^{-j}\lambda)}\rme^{-\rmi 2^{-j}\lambda L}.}
\end{multline*}
When $\hat q(.)=1$, the expressions above become
\begin{align*}
\hat \psi_h(\lambda)&= 2^{-1/2}\Bigl(\frac{1-\rme^{-\rmi\lambda/2}}{2}\Bigr)^M\overline{\hat d_L(\lambda/2+\pi)}\rme^{-\rmi\lambda/2}\prod_{j=2}^\infty \Bigl(\frac{1+\rme^{-\rmi2^{-j}\lambda}}{2}\Bigr)^M{\hat d_L(2^{-j}\lambda)}\\
\hat \psi_g(\lambda)&= -2^{-1/2}\Bigl(\frac{1-\rme^{-\rmi\lambda/2}}{2}\Bigr)^M\hat d_L(\lambda/2+\pi)\rme^{+\rmi(\lambda/2 +\pi)L}\rme^{-\rmi\lambda/2}\prod_{j=2}^\infty \Bigl(\frac{1+\rme^{-\rmi2^{-j}\lambda}}{2}\Bigr)^M\overline{\hat d_L(2^{-j}\lambda)}\rme^{-\rmi 2^{-j}\lambda L}.
\end{align*}

We shall use the following equality 
\begin{equation*}
\prod_{\ell=1}^\infty\left(\frac{1+\rme^{-\rmi2^{-\ell}\lambda}}{2}\right)=\frac{1-e^{-\rmi\lambda}}{-\rmi\lambda}=\rme^{-\rmi\lambda/2}\frac{\sin(\lambda/2)}{\lambda/2}.
\end{equation*}
See \emph{e.g.} \cite[page 245]{Mallat99}.
It yields 
\begin{align}
\label{eqn:phih}\hat \varphi_h(\lambda)&= 2^{-1/2}\Bigl(\frac{\sin(\lambda/2)}{\lambda/2}\Bigr)^M\prod_{j=1}^\infty \overline{\hat d_L(2^{-j}\lambda)},\\
\label{eqn:phig}\hat \varphi_g(t)&= 2^{-1/2}\Bigl(\frac{\sin(\lambda/2)}{\lambda/2}\Bigr)^M\prod_{j=1}^\infty \overline{\hat d_L(2^{-j}\lambda)}\rme^{-\rmi 2^{-j}\lambda L},
\end{align}
and 
\begin{align}
\label{eqn:psih}\hat \psi_h(\lambda)&= 2^{-1/2}\sin(\lambda/4)^M \Bigl(\frac{\sin(\lambda/4)}{\lambda/4}\Bigr)^M\overline{\hat d_L(\lambda/2+\pi)}\rme^{-\rmi\lambda}\prod_{j=2}^\infty {\hat d_L(2^{-j}\lambda)},\\
\label{eqn:psig}\hat \psi_g(\lambda)&= 2^{-1/2}\sin(\lambda/4)^M \Bigl(\frac{\sin(\lambda/4)}{\lambda/4}\Bigr)^M \hat d_L(\lambda/2+\pi)\rme^{+\rmi(\lambda +\pi)L}\rme^{-\rmi\lambda}\prod_{j=2}^\infty \overline{\hat d_L(2^{-j}\lambda)}\rme^{-\rmi 2^{-j}\lambda L}.
\end{align}

\subsection{A first property}

The function $\widehat{d}_L(.)$ satisfies, for all $\lambda\in\R$,
\begin{equation} \label{eqn:d}
\widehat{d}_L(\lambda)=\rme^{\rmi\,\lambda(-L/2+1/4)}\left[\cos(\lambda/4)^{2L+1}+\rmi\,(-1)^{L+1} \sin(\lambda/4)^{2L+1}\right].
\end{equation}
We deduce the following lemma.
\begin{lemma}
\label{lem:supd}
\[
\sup_{\lambda\in\R}\abs{\hat d_L(\lambda)}=1.
\]
\end{lemma}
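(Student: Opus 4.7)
The factor $\rme^{\rmi\lambda(-L/2+1/4)}$ is a unit modulus complex number, so
\[
\abs{\hat d_L(\lambda)}^2 = \abs{\cos(\lambda/4)^{2L+1}+\rmi\,(-1)^{L+1}\sin(\lambda/4)^{2L+1}}^2 = \cos(\lambda/4)^{4L+2}+\sin(\lambda/4)^{4L+2},
\]
since $(-1)^{L+1}$ is real. Hence the problem reduces to bounding the elementary real function $c\mapsto c^{2L+1}+(1-c)^{2L+1}$ on $[0,1]$ with $c=\cos^2(\lambda/4)$.

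Setting $c=\cos^2(\lambda/4)\in[0,1]$, we have $\sin^2(\lambda/4)=1-c$, so
\[
\abs{\hat d_L(\lambda)}^2 = c^{2L+1}+(1-c)^{2L+1}.
\]
Since $2L+1\geq 1$ and $c,1-c\in[0,1]$, we have $c^{2L+1}\leq c$ and $(1-c)^{2L+1}\leq 1-c$, hence
\[
\abs{\hat d_L(\lambda)}^2 \leq c+(1-c) = 1.
\]
Equality is attained when $c\in\{0,1\}$, i.e.\ when $\lambda\in 2\pi\Z$ (take for instance $\lambda=0$, for which $\cos(0)^{2L+1}=1$ and $\sin(0)^{2L+1}=0$). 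This proves $\sup_{\lambda\in\R}\abs{\hat d_L(\lambda)}=1$.

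The argument is elementary; there is no real obstacle beyond observing that the exponential prefactor is unimodular and recognising $\cos^{4L+2}+\sin^{4L+2}$ as a convex combination-like quantity bounded by $\cos^2+\sin^2=1$.
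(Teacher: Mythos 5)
Your proof is correct: the unimodular prefactor drops out, and the bound $c^{2L+1}+(1-c)^{2L+1}\leq c+(1-c)=1$ for $c=\cos^2(\lambda/4)\in[0,1]$, with equality at $\lambda\in 2\pi\Z$, settles the claim. The paper omits this proof as ``straightforward,'' and your argument is exactly the elementary computation the authors had in mind, so there is nothing to compare beyond noting that you have supplied the missing details correctly.
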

The proof is straightforward and it is thus omitted. 

Using the fact that, for all $\lambda\in\R$, $\abs{\frac{\sin(\lambda)}{\lambda}}\leq 1$, a direct consequence is the following result.
\begin{lemma}
\label{lem:fini}
\begin{equation*}
\sup_{\lambda\in\R}\abs{\hat\varphi(\lambda)}\leq C_\infty,
\end{equation*}
with $C_\infty=1$.
\end{lemma}

\subsection{Expression of the wavelet filters}
\label{sec:tauj}

Recall that
\[
W_{j,k}(\ell)=\int_{-\infty}^\infty \sum_{s\in\Z} X_\ell(s)\varphi(t+s)\psi_{j,k}(t)\rmd t. 
\]
Hence,
\begin{gather*}
W_{j,k}(\ell)=\sum_{s\in\Z} \tau_{j}(2^jk-s)X_\ell(s), \quad j\geq 0, k\in\Z\,,\\
\text{with  }\hat\tau_j(\lambda)=\sum_{s\in\Z} \tau_{j}(2^jk-s)\rme^{\rmi\lambda\,s}=\int_{-\infty}^\infty \sum_{\ell\in\Z}\varphi(t+\ell)\,\rme^{-\rmi\,\lambda\,\ell}2^{-j/2} \psi(2^{-j}t)\rmd t.
\end{gather*}

\section[Properties of CFW-C(M,L) filters]{Properties of CFW-C(M,L) filters}

Let us introduce the following properties.
\begin{enumerate}[label=(W\arabic*)]
\item\label{ass:finite}
 \emph{Finite support}. $\varphi$ and $\psi$ have finite support.
  \item\label{ass:moments}
   \emph{Vanishing moments}. There exist $M\geq 0$ and $C_m>0$ such that for all $j\geq0$ and
  $\lambda\in \R$,
    \begin{equation*}
    \abs{\widehat{\psi}(\lambda)}\leq {C_m\,\abs{\lambda}^M}\;,
    \end{equation*}
with $C_m$ positive constant possibly depending on $M$.
\item\label{ass:unif-smooth}
 \emph{Uniform smoothness}. There exist  $\alpha>1$ and $C_s>0$ such that for all  $\lambda\in \R$,
    \begin{equation*}
    \abs{\widehat \psi(\lambda)}\leq \frac{C_s}{(1+\abs{\lambda})^{\alpha}},
    \end{equation*}
with $\alpha$ and $C_s$ depending on $L$ and $M$.
\item\label{ass:phi_unif} 
\emph{Scaling function}.
There exist $C_\varphi$ depending on $M$ such that, for all $\lambda\in(-\pi,\pi)$, for all $k\in\Z\setminus\{0\}$,
\[
\abs{\hat\varphi(\lambda+2k\,\pi)}\leq C_\varphi\,\abs{\lambda}^{M}\,.
\]
\end{enumerate}

Properties \ref{ass:finite}, \ref{ass:moments}, \ref{ass:unif-smooth} and \ref{ass:phi_unif} correspond respectively to (W1), (W2), (W3) and (W4) of \cite{Moulines07SpectralDensity,Moulines08Whittle, AchardGannaz} in the  context of real-wavelets. We can establish that they are satisfied by CFW-C(L,M) wavelets. Assumption \ref{ass:finite} is given in Lemma \ref{lem:nj}. The wavelets supports are finite when $L$ and $M$ are finite.

\begin{proposition}\label{prop:filters}
When $M\geq 2$, and $L\geq 1$, CFW-C(M,L) wavelets satisfy 
 \ref{ass:moments}, \ref{ass:unif-smooth} and \ref{ass:phi_unif}, with $\alpha=M$ 
 and constants $C_m=1$, $C_{s}=2\cdot 5^{M}$ and $C_\varphi=2$.
\end{proposition}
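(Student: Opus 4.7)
The plan is to verify each of the three properties \ref{ass:moments}, \ref{ass:unif-smooth}, \ref{ass:phi_unif} by brute-force estimation of the explicit closed-form expressions for $\hat\psi_h,\hat\psi_g,\hat\varphi_h,\hat\varphi_g$ derived in equations \eqref{eqn:phih}--\eqref{eqn:psig}. Throughout I will exploit Lemma~\ref{lem:supd}, which lets us discard every factor of $\hat d_L$ (and hence every infinite product $\prod_{j\geq 1}\hat d_L(2^{-j}\lambda)$) as being bounded by $1$. Since $\hat\psi=\hat\psi_h+\rmi\hat\psi_g$ and $\hat\varphi=\hat\varphi_h+\rmi\hat\varphi_g$, bounding each real/imaginary component separately and summing will give the final constants (the factor $2$ appearing in $C_s$ and $C_\varphi$ is exactly this.

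For \ref{ass:moments}, the idea is to combine the two ``sine'' factors in \eqref{eqn:psih}--\eqref{eqn:psig} as
\[
|\sin(\lambda/4)|^M\,\Bigl|\tfrac{\sin(\lambda/4)}{\lambda/4}\Bigr|^M\leq |\lambda/4|^M\cdot 1=|\lambda|^M/4^M,
\]
using $|\sin x|\leq |x|$ in the first factor and $|\sin x/x|\leq 1$ in the second. With the $|\hat d_L|\leq 1$ estimates and the normalisation $2^{-1/2}$, one obtains $|\hat\psi_h(\lambda)|,|\hat\psi_g(\lambda)|\leq 2^{-1/2}\,4^{-M}|\lambda|^M$, hence $|\hat\psi(\lambda)|\leq 2^{1/2}\,4^{-M}|\lambda|^M\leq |\lambda|^M$ for $M\geq 1$.

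For \ref{ass:unif-smooth}, I reverse the roles: keep the factor $|\sin(\lambda/4)|^M\leq 1$ as it is, and use $|\sin(\lambda/4)/(\lambda/4)|\leq\min(1,4/|\lambda|)$. This yields $|\hat\psi_h(\lambda)|,|\hat\psi_g(\lambda)|\leq 2^{-1/2}\min\bigl(1,(4/|\lambda|)^M\bigr)$. The (elementary) key inequality is
\[
\min\bigl(1,(4/|\lambda|)^M\bigr)\leq \Bigl(\tfrac{5}{1+|\lambda|}\Bigr)^M\qquad\text{for all }\lambda\in\R,
\]
which one checks separately on $|\lambda|\leq 4$ (both sides dominated by $5^M/(1+|\lambda|)^M$) and on $|\lambda|>4$ (reducing to $4(1+|\lambda|)\leq 5|\lambda|$). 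Summing the two components gives $|\hat\psi(\lambda)|\leq 2^{1/2}\cdot 5^M/(1+|\lambda|)^M\leq 2\cdot 5^M/(1+|\lambda|)^M$, so \ref{ass:unif-smooth} holds with $\alpha=M$ and $C_s=2\cdot 5^M$.

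For \ref{ass:phi_unif}, starting from \eqref{eqn:phih}--\eqref{eqn:phig}, the crucial observation is the identity $\sin((\lambda+2k\pi)/2)=(-1)^k\sin(\lambda/2)$, so for $\lambda\in(-\pi,\pi)$ and $k\in\Z\setminus\{0\}$,
\[
\Bigl|\tfrac{\sin((\lambda+2k\pi)/2)}{(\lambda+2k\pi)/2}\Bigr|^M=\frac{|\sin(\lambda/2)|^M}{|\lambda/2+k\pi|^M}\leq\frac{|\lambda/2|^M}{(\pi/2)^M}=\pi^{-M}|\lambda|^M,
\]
where I used $|\lambda+2k\pi|\geq \pi$ for $|k|\geq 1$, $|\lambda|<\pi$. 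Again absorbing the $\hat d_L$ factors via Lemma~\ref{lem:supd} and summing the two components gives $|\hat\varphi(\lambda+2k\pi)|\leq 2^{1/2}\pi^{-M}|\lambda|^M\leq 2|\lambda|^M$, which is \ref{ass:phi_unif} with $C_\varphi=2$.

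No step is truly hard---everything reduces to $|\sin x|\leq|x|$, $|\sin x/x|\leq 1$, and $|\hat d_L|\leq 1$---but the one subtlety to keep track of is the $\min/\text{power}$ comparison in the smoothness step, which is what pins down the constant $5^M$ claimed in the statement. Property \ref{ass:finite} is not addressed because it is already built into the CFW-C$(M,L)$ construction (the filters $\hat h^{(L)},\hat g^{(L)}$ are trigonometric polynomials of explicit degree $M+L$, giving $T_\varphi=M+L+1$, as noted right before Section~\ref{sec:example}).
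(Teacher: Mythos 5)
Your proposal is correct and follows essentially the same route as the paper: the same closed-form expressions \eqref{eqn:phih}--\eqref{eqn:psig}, the same use of $\abs{\hat d_L}\leq 1$ to discard all analytic factors, and the same elementary sine bounds, with your $\min(1,4/\abs{\lambda})$ case analysis pinning down the same constant $5^M$ that the paper gets from the one-line identity $\abs{\tfrac{\sin(x/4)}{x/4}}+4\abs{\sin(x/4)}\leq 5$. The only genuine (and welcome) streamlining is in \ref{ass:phi_unif}, where your single estimate $\abs{\sin(\lambda/2)/(\lambda/2)}^M\leq(\abs{\omega}/\pi)^M$ via $\abs{\lambda/2+k\pi}\geq\pi/2$ handles all $k\neq 0$ at once, whereas the paper splits into odd and even $k$ and uses the partially-expanded product \eqref{eqn:phih1}--\eqref{eqn:phig1} in the odd case.
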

The proof is given in Section~\ref{sec:proof_W}.

A remarkable property of $CFW-C(M,L)$ filters is that the regularity of the wavelets is only determined by the parameter $M$, since all parameters and constants in the proposition above only depend on $M$. All the same, the quasi-analyticity only depends on the parameter $L$, through Theorem~\ref{th:main-result-analyticity}.

With these assumptions, we can establish some properties about wavelet filters. 
At a given scale $j\geq 0$, for any $k\in\Z$, wavelet coefficients $W_{j,k}(\ell)$ of a process $X_\ell(\cdot)$ can be decomposed as
\[
W_{j,k}(\ell)=\sum_{s\in\Z} \tau_{j}(2^jk-s)X_\ell(s), \quad j\geq 0, k\in\Z\,.
\]
See Section \ref{sec:tauj}.
We recover \cite[Proposition 3]{Moulines07SpectralDensity}. More precisely, we can establish the following results.
\begin{proposition}\label{prop:ineq}
Suppose \ref{ass:finite}, \ref{ass:moments}, \ref{ass:unif-smooth}, and  \ref{ass:phi_unif} and Lemma \ref{lem:fini} hold. Then, for all $j\geq 0$, for all $\lambda\in\R$, 
\begin{align}
\label{eqn:tauj}
\abs{\hat \tau_j(\lambda)-2^{j/2}\hat \varphi(\lambda)\overline{\hat \psi(2^{j}\lambda)}}&\leq C_\tau 2^{j(1/2-\alpha)}\lvert\lambda\rvert^M,\\
\label{eqn:moments_phipsi}
\abs{\hat \varphi(\lambda)\overline{\hat \psi(2^{j}\lambda)}}&\leq C_\infty C_m\abs{2^j\lambda}^M,\\
\label{eqn:smoothness_phipsi}
\abs{\hat \varphi(\lambda)\overline{\hat \psi(2^{j}\lambda)}}&\leq \frac{C_\infty C_s}{(1+2^j\abs{\lambda})^{\alpha}},\\
\label{eqn:moments_tauj}
\abs{2^{-j/2}\hat\tau_j(2^{-j}\lambda)}&\leq C_{m\tau}\,\abs{\lambda}^{M},
\end{align}
and, for all $j,j'\geq 0$,  for all $|2^{-j}\lambda|\leq \pi$, 
\begin{align}
\label{eqn:smoothness_tauj}
\abs{2^{-j/2}\hat\tau_j(2^{-j}\lambda)}& \leq C_{s\tau}\,(1+\abs{\lambda})^{-\alpha},\\
\label{eqn:moments_smooth_tauj}
\abs{2^{-j/2}\hat\tau_j(2^{-j}\lambda)}&\leq C_{ms\tau}\,\abs{\lambda}^{M}(1+\abs{\lambda})^{-\alpha-M},
\end{align}
with $C_\tau$, $C_{m\tau}$,
 $C_{s\tau}$, and $C_{ms\tau}$ positive constants only depending on $\alpha, M$, $C_m$, $C_s$ and $C_\varphi$.
\end{proposition}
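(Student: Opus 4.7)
The plan is to first derive a Poisson-summation (periodization) identity for $\hat\tau_j$ from the definition
\[
\hat\tau_j(\lambda)=\int_{\R}\sum_{\ell\in\Z}\varphi(t+\ell)\,\rme^{-\rmi\lambda\ell}\,2^{-j/2}\psi(2^{-j}t)\,\rmd t.
\]
First I would change variables $u=t+\ell$ inside, write the inner integral as an $L^2$-inner product, and apply Plancherel together with the Fourier transform of the dilate/translate $2^{-j/2}\psi(2^{-j}(\cdot-\ell))$. Pulling the $\ell$-sum through and using Poisson summation $\sum_\ell\rme^{\rmi(\mu-\lambda)\ell}=2\pi\sum_{k\in\Z}\delta(\mu-\lambda-2k\pi)$ yields the key representation
\[
\hat\tau_j(\lambda)=2^{j/2}\sum_{k\in\Z}\hat\varphi(\lambda+2k\pi)\,\overline{\hat\psi(2^j(\lambda+2k\pi))}.
\]
The $k=0$ term is exactly the leading term in \eqref{eqn:tauj}; everything else is the remainder $R_j(\lambda)=2^{j/2}\sum_{k\neq 0}\hat\varphi(\lambda+2k\pi)\overline{\hat\psi(2^j(\lambda+2k\pi))}$ that has to be controlled.

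To prove \eqref{eqn:tauj}, I would restrict to $\lambda\in(-\pi,\pi)$ (the natural fundamental domain of the $2\pi$-periodic $\hat\tau_j$), bound $|\hat\varphi(\lambda+2k\pi)|\leq C_\varphi |\lambda|^M$ via \ref{ass:phi_unif} for every $k\neq 0$, and bound $|\hat\psi(2^j(\lambda+2k\pi))|\leq C_s(1+2^j|\lambda+2k\pi|)^{-\alpha}$ via \ref{ass:unif-smooth}. Since $|\lambda+2k\pi|\geq(2|k|-1)\pi$ for $k\neq 0$, the series $\sum_{k\neq 0}(1+2^j\pi(2|k|-1))^{-\alpha}$ is summable (as $\alpha>1$) and of order $2^{-j\alpha}$, which gives $|R_j(\lambda)|\leq C_\tau 2^{j(1/2-\alpha)}|\lambda|^M$ as claimed. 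Bounds \eqref{eqn:moments_phipsi} and \eqref{eqn:smoothness_phipsi} then follow at once: factor $\hat\varphi(\lambda)$ using \eqref{eqn:fini} (giving $|\hat\varphi(\lambda)|\leq C_\infty$), and bound $|\hat\psi(2^j\lambda)|$ by \ref{ass:moments} and \ref{ass:unif-smooth} respectively.

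For the three bounds on $2^{-j/2}\hat\tau_j(2^{-j}\lambda)$ I would substitute $\lambda\mapsto 2^{-j}\lambda$ in the periodization identity, so that it becomes $\hat\varphi(2^{-j}\lambda)\overline{\hat\psi(\lambda)}+2^{-j/2}R_j(2^{-j}\lambda)$, and separately bound the main term and the remainder. The main term is handled directly by \eqref{eqn:moments_phipsi} applied in reverse (giving $C_\infty C_m|\lambda|^M$), by \eqref{eqn:smoothness_phipsi} (giving $C_\infty C_s(1+|\lambda|)^{-\alpha}$), or by their combined version $|\hat\psi(\lambda)|\leq C'|\lambda|^M(1+|\lambda|)^{-\alpha-M}$ (obtained by taking the minimum in the two bounds and treating $|\lambda|\leq 1$ and $|\lambda|\geq 1$ separately). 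For the remainder, \eqref{eqn:tauj} gives $|2^{-j/2}R_j(2^{-j}\lambda)|\leq C_\tau 2^{-j(\alpha+M)}|\lambda|^M$, and the constraint $|2^{-j}\lambda|\leq\pi$ forces $(1+|\lambda|)\leq(1+\pi)2^j$, so $2^{-j(\alpha+M)}\leq(1+\pi)^{\alpha+M}(1+|\lambda|)^{-\alpha-M}$. Feeding this through yields \eqref{eqn:moments_tauj}, \eqref{eqn:smoothness_tauj}, and \eqref{eqn:moments_smooth_tauj}, with constants $C_{m\tau}$, $C_{s\tau}$, $C_{ms\tau}$ depending only on $\alpha,M,C_m,C_s,C_\varphi$.

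The main obstacle is the careful justification of the exchange of sum and integral in the Poisson-summation step, which requires verifying that the double series $\sum_{k,\ell}$ is absolutely convergent; this relies on \ref{ass:unif-smooth} with $\alpha>1$ and on \eqref{eqn:fini}. Once that is in place, the rest of the argument is essentially bookkeeping with geometric/$\ell^1$-type sums over $k\neq 0$, and careful case-splitting ($|\lambda|\leq 1$ vs.\ $|\lambda|\geq 1$, and $|\lambda|\leq 2^j\pi$ throughout) to combine moment and smoothness controls into the composite bound \eqref{eqn:moments_smooth_tauj}.
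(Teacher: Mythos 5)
Your proposal is correct and follows essentially the same route as the paper: both derive the periodization identity $\hat\tau_j(\lambda)=2^{j/2}\sum_{k\in\Z}\hat\varphi(\lambda+2k\pi)\overline{\hat\psi(2^j(\lambda+2k\pi))}$, bound the $k\neq 0$ tail via \ref{ass:phi_unif} and \ref{ass:unif-smooth} with $\abs{\lambda+2k\pi}\geq(2\abs{k}-1)\pi$ and summability from $\alpha>1$, and then obtain the remaining inequalities by combining the main-term bounds with the remainder bound and the constraint $\abs{2^{-j}\lambda}\leq\pi$ (which yields the $(1+\pi)^{\alpha+M}$ factors). Your explicit attention to the interchange of sum and integral is a detail the paper passes over silently, but it changes nothing in substance.
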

The proof is given in Section~\ref{sec:proof_ineq}.

The following property corresponds to (79) in \cite[Proposition 3]{Moulines07SpectralDensity}. In the real wavelets context, it is a consequence of (W1) to (W4) but we prove it separately here to explicit the constants.

\begin{proposition}\label{prop:ass_infini}
Consider CFW-C(M,L) wavelets, {with $M\geq 2$}. 
For all $j\geq1$,
\begin{align}\label{ass::infini}
\sup_{\abs{\lambda}\leq \pi}\abs{\abs{2^{-j/2}\hat\tau_j(2^{-j}\lambda)}^2-
\abs{\hat\psi(\lambda)}^2}&\leq C_a\,2^{-\gamma\,j}\abs{\lambda}^{2M}\,,\\
\label{ass::infini2}
\sup_{1\leq\abs{\lambda}\leq 2^j\pi}\abs{\abs{2^{-j/2}\hat\tau_j(2^{-j}\lambda)}^2-
\abs{\hat\psi(\lambda)}^2}&\leq C_a\,2^{-\gamma\,j}\abs{\lambda}^{2}(1+\abs{\lambda})^{-\alpha}\,,
\end{align}
with $\gamma=2$ and $C_a=2\cdot 5^{2M}\cdot (M+L+1)$.
\end{proposition}
The proof is given in Section~\ref{sec:proof_reg}.

\subsection{Proof of Proposition~\ref{prop:filters}}

\label{sec:proof_W}

We first establish that CFW-C(M,L) filters satisfy properties {(W2)}--(W4).

\subsubsection{Property~\ref{ass:moments}}

Let $\lambda\in\R$. Recall that $\hat \psi_h(\lambda)$ and $\hat \psi_g(\lambda)$ are given respectively by \eqref{eqn:psih} and \eqref{eqn:psig}.
Observe that $\abs{\sin(\lambda/4)}^M\leq 4^{-M}\abs{\lambda}^M$. Since $\sup |\hat d_L|=1$, assumption~\ref{ass:moments} follows with a constant $C_m=1$, for all $M\geq 1$.

\subsubsection{Property \ref{ass:unif-smooth}}

\label{sec:unif-smooth}

Let $\lambda\in\R$. Recall that $\hat \psi_h(\lambda)$ is given by \eqref{eqn:psih}.
Since $\sup_{\lambda\in\R}\abs{\hat d_L(\lambda)}\leq 1$, we obtain
\[
\abs{\hat \psi_h(\lambda)}\leq\abs{\sin(\lambda/4)}^M\abs{\frac{\sin(\lambda/4)}{\lambda/4}}^M.
\]
Since $\frac{\sin(x/4)}{|x/4|}(1+|x|)=\abs{\frac{\sin(x/4)}{x/4}}+4\,\abs{\sin(x/4)}\leq 5$ for any $x\in\R\setminus\{0\}$,
it follows that
\begin{equation*}
\abs{\hat \psi_h(\lambda)}\leq\abs{\sin(\lambda/4)}^M\left(\frac{5}{1+\abs{\lambda}}\right)^{M}.
\end{equation*}
Consequently, $\abs{\hat \psi_h(\lambda)}\leq\left(\frac{5}{1+\abs{\lambda}}\right)^{M}.$
A similar result can be proved for filter $\hat \psi_g$. By triangular inequality, we get
\begin{equation*}
\abs{\hat\psi(\lambda)}\leq \frac{C_{s}}{(1+\abs{\lambda})^\alpha},
\end{equation*}
with $\alpha=M$ and a constant $C_{s}$ equal to $2\cdot 5^{M}$.

\subsubsection{Property \ref{ass:phi_unif}}

Let $\lambda=\omega+2k\pi$, $\lvert\omega\rvert\leq\pi$, $k\in\Z$, $k\neq 0$.
We distinguish two cases:
\begin{itemize}
\item If $k$ is odd,\\
Using the expressions of $\hat \varphi_h(\lambda)$ and $\hat \varphi_g(\lambda)$ given respectively by \eqref{eqn:phih1} and \eqref{eqn:phig1}, 
\begin{align*}
\hat \varphi_h(\lambda)&=\bigl({\cos(\lambda/4)}\bigr)^M\overline{\hat d_L(\lambda/2)}\prod_{j=2}^\infty \bigl(\frac{1+\rme^{-\rmi2^{-j}\lambda}}{2}\bigr)^M\overline{\hat d_L(2^{-j}\lambda)},\\
\hat \varphi_g(\lambda)&=\bigl({\cos(\lambda/4)}\bigr)^M\overline{\hat d_L(\lambda/2)}\rme^{-\rmi \lambda L/2}\prod_{j=2}^\infty \bigl(\frac{1+\rme^{-\rmi2^{-j}\lambda}}{2}\bigr)^M\overline{\hat d_L(2^{-j}\lambda)}\rme^{-\rmi 2^{-j}\lambda L}.
\end{align*}
Observe that
\[
\bigl\lvert{\cos(\lambda/4)}\bigr\rvert^M=\lvert\cos(\omega/4+k\pi/2)\rvert^M=\lvert\sin(\omega/4)\rvert^M\leq \lvert\omega\rvert^M/4^M.
\]
With Lemma \ref{lem:supd}, we obtain that
\[
\lvert\hat \varphi_h(\lambda)\rvert\leq \lvert\omega\rvert^M/4^M\text{  ~and~  }
\lvert\hat \varphi_g(\lambda)\rvert\leq \lvert\omega\rvert^M/4^M.
\]
Hence, \begin{equation*}
\lvert\hat \varphi(\lambda)\rvert\leq\lvert\omega\rvert^M\cdot 2/4^M.
\end{equation*}

\item If $k$ is even, $k\geq 2$,\\
Let us use the expressions of $\hat \varphi_h(\lambda)$ and $\hat \varphi_g(\lambda)$ given respectively by \eqref{eqn:phih} and \eqref{eqn:phig}. 
We have,
\[
\Bigl\lvert\frac{\sin(\lambda/2)}{\lambda/2}\Bigr\rvert^M=\Bigl\lvert\frac{\sin(\omega/2)}{\omega/2}\Bigr\rvert\,\Bigl\lvert\frac{\omega}{\lambda}\Bigr\rvert^M\leq \Bigl\lvert\frac{\omega}{\lambda}\Bigr\rvert^M \leq \Bigl\lvert\frac{\omega}{\pi}\Bigr\rvert^M.
\]
Using Lemma \ref{lem:supd}, 
\[
\lvert\hat \varphi_h(\lambda)\rvert\leq \lvert\omega\rvert^M/\pi^M\text{  ~and~  }
\lvert\hat \varphi_g(\lambda)\rvert\leq \lvert\omega\rvert^M/\pi^M.
\]
Hence, \begin{equation*}
\lvert\hat \varphi(\lambda)\rvert\leq\lvert\omega\rvert^M\cdot 2/\pi^M.
\end{equation*}

\end{itemize}
We deduce that \ref{ass:phi_unif} follows with $C_\varphi=1$ when $M\geq 1$.

\subsection{Proof of Proposition \ref{prop:ineq}}

\label{sec:proof_ineq}

This section aims at recovering similar results than those given in \cite[Proposition 3]{Moulines07SpectralDensity} with explicit constants. 

\subsubsection[Approximation of the filters]{Proof of inequality \eqref{eqn:tauj}}

Observe that $t\mapsto 
\sum_{k\in\Z}\hat\varphi(\lambda+2k\pi)\rme^{\rmi\,t\,(\lambda+2k\pi)}$ is $2\pi$-periodic, integrable, and that its $\ell$-th Fourier coefficient is  $2\pi\varphi(t-\ell)$,
\begin{align*}
{\frac{1}{2\pi}\int_{-\infty}^\infty \bigl(\sum_{k\in\Z}\hat\varphi(\lambda+2k\pi)\rme^{\rmi\, t\,(\lambda+2k\pi)}\bigr)\rme^{-\rmi \ell\lambda}\rmd \lambda}
= \frac{1}{2\pi}\int_{-\infty}^\infty \hat\varphi(\lambda)\rme^{\rmi (t-\ell)\lambda}\rmd \lambda
= \varphi(t-\ell).
\end{align*}
 It follows that
\[
\sum_{\ell\in\Z}\varphi(t-\ell)\rme^{-\rmi\,t\,\lambda}=\sum_{k\in\Z}\hat\varphi(\lambda+2k\pi)\rme^{\rmi\,t\,(\lambda+2k\pi)}.
\]
 Hence, as in \cite[p180]{Moulines07SpectralDensity},
\begin{align*}
\hat\tau_j(\lambda)&=\int_{-\infty}^\infty \bigl(\sum_{k\in\Z}\hat\varphi(\lambda+2k\pi)\rme^{\rmi\,t\,(\lambda+2k\pi)}\bigr)2^{-j/2} \psi(2^{-j}t)\rmd t\\
 &=\sum_{k\in\Z}\hat\varphi(\lambda+2k\pi)\int_{-\infty}^\infty 2^{-j/2} \psi(2^{-j}t)\rme^{\rmi\,t\,(\lambda+2k\pi)}\rmd t\\
 &=2^{j/2}\sum_{k\in\Z}\hat\varphi(\lambda+2k\pi)\overline{\hat\psi(2^{j}(\lambda+2k\pi))}.
\end{align*}
We obtain
\begin{equation} \label{eqn:taujapprox}
\abs{\hat \tau_j(\lambda)-2^{j/2}\hat \varphi(\lambda)\overline{\hat \psi(2^{-j}\lambda)}}=2^{j/2}\abs{\sum_{k\in\Z,k\neq 0} \hat\varphi(\lambda+2k\pi)\overline{\hat\psi(2^j(\lambda+2k\pi))}}.
\end{equation}

Property \ref{ass:unif-smooth} yields, for all $\lambda\in(-\pi,\pi)$,
\[
\abs{\hat\psi(2^j(\lambda+2k\pi))}\leq \frac{C_s}{\abs{2^j(\lambda+2k\pi)}^\alpha}\leq \frac{C_s}{(2^j\pi\,(2\,|k|-1))^\alpha}.
\]
Using the inequality above and \ref{ass:phi_unif} in \eqref{eqn:taujapprox}, we get
\begin{equation*} 
\abs{\hat \tau_j(\lambda)-2^{j/2}\hat \varphi(\lambda)\overline{\hat \psi(2^{j}\lambda)}}\leq C_\tau 2^{j(1/2-\alpha)}\lvert\lambda\rvert^M,
\end{equation*}
with $C_\tau=C_s C_\varphi 2\zeta(\alpha)$, with $\zeta(\cdot)$ the Riemann zeta-function. Since $\alpha>1$, $C_\tau<\infty$. If $\alpha\geq 2$, $\zeta(\alpha)\leq \pi^2/6<2$, and, hence, one can take $C_\tau=4\,C_s C_\varphi$.

\subsubsection[Vanishing moments and regularity of the approximation of the filter]{Proof of  inequality \eqref{eqn:moments_phipsi} and  inequality  \eqref{eqn:smoothness_phipsi}}

{Using \ref{ass:moments}, \ref{ass:unif-smooth} and Lemma \ref{lem:fini},}   inequalities \eqref{eqn:moments_phipsi} and \eqref{eqn:smoothness_phipsi} are straightforward.

\subsubsection[Vanishing moments at a given scale]{Proof of  inequality \eqref{eqn:moments_tauj} and  inequality \eqref{eqn:smoothness_tauj}}

{With inequalities \eqref{eqn:tauj}, \eqref{eqn:moments_phipsi} and \eqref{eqn:smoothness_phipsi},} we get
\begin{align*}
\abs{\hat \tau_j(\lambda)}&\leq C_\infty C_m\,\abs{2^j\lambda}^M+ C_\tau 2^{j(1/2-\alpha)}\lvert\lambda\rvert^M\\
\text{and  }\quad \abs{\hat \tau_j(\lambda)} &\leq \frac{2^{j/2}}{(1+\abs{2^j\lambda})^{\alpha}}\Bigl(C_\infty C_s2^{-j/2}+ C_\tau 2^{-j\alpha}(1+2^j\lvert\lambda\rvert)^{\alpha}\lvert\lambda\rvert^M\bigr).
\end{align*}
It follows that 
\begin{align*}
\abs{\hat \tau_j(\lambda)}&\leq C_{m\tau}\,\abs{2^j\lambda}^M\\
\text{and  }\quad 2^{-j/2}\abs{\hat \tau_j(2^{-j}\lambda)} &\leq C_{s\tau}\frac{1}{(1+\abs{\lambda})^{\alpha}} \quad \text{   when  } \lvert\lambda\rvert\leq \pi,
\end{align*}
with $C_{m\tau}= C_\infty\,C_m+ C_\tau$, and $ C_{s\tau}=C_\infty\,C_s+C_\tau(1+\pi)^{\alpha+M}$.

\subsubsection[]{Proof of inequality \eqref{eqn:moments_smooth_tauj}}

With \eqref{eqn:moments_tauj}, inequality \eqref{eqn:moments_smooth_tauj} is straightforward with $C_{ms\tau}=C_{m\tau}(1+\pi)^{\alpha+M}$.

\subsection[Approximation of the filter]{Proof of Proposition \ref{prop:ass_infini}}

\label{sec:proof_reg}

Let $j\geq 1$ and $|\lambda|\leq 2^j\pi$. Inequalities \eqref{eqn:tauj} and \eqref{eqn:moments_phipsi} imply that
\begin{align*}
\lefteqn{\abs{\abs{2^{-j/2}\hat\tau(2^{-j}\lambda)}^2-
\abs{\hat\varphi(2^{-j}\lambda)\overline{\hat\psi(\lambda)}}^2}}\\
& \leq \abs{2^{-j/2}\hat\tau(2^{-j}\lambda)-
\hat\varphi(2^{-j}\lambda)\overline{\hat\psi(\lambda)}}\Bigl(\abs{2^{-j/2}\hat\tau(2^{-j}\lambda)}+
\abs{\hat\varphi(2^{-j}\lambda)\overline{\hat\psi(\lambda)}}\Bigr)\\
&\leq C_\tau 2^{-j\alpha}\abs{2^{-j}\lambda}^M\,(C_{m\tau}+C_\infty C_m)\abs{\lambda}^M\\
&\leq C_{a1}\,2^{-\gamma_1\,j}\abs{\lambda}^{2M}\,,
\end{align*}
with $C_{a1}=C_\tau\,(C_{m\tau}+C_m C_\infty)$ and $\gamma_1=M+\alpha$.
Similarly, using rather inequalities \eqref{eqn:tauj}, \eqref{eqn:smoothness_tauj} and \eqref{eqn:smoothness_phipsi}, 
\begin{equation*}
\abs{\abs{2^{-j/2}\hat\tau(2^{-j}\lambda)}^2-
\abs{\hat\varphi(2^{-j}\lambda)\overline{\hat\psi(\lambda)}}^2}\leq C_{b1}\,2^{-\gamma_1\,j}\abs{\lambda}^{M}(1+\abs{\lambda})^{-\alpha}\,,
\end{equation*}
with $C_{b1}=C_\tau\,(C_{s\tau}+C_s C_\infty)$.

Next,
\begin{align*}
\abs{\abs{\hat\varphi(2^{-j}\lambda)\overline{\hat\psi(\lambda)}}^2-\abs{\hat\psi(\lambda)}^2}& \leq \abs{\hat\psi(\lambda)}^2\abs{\abs{\hat\varphi(2^{-j}\lambda)}^2-
1}
\end{align*}
The control of the right-hand side is obtained with the following result.

\begin{lemma}
\label{lem:phi}
There exists a constant $C_Z=(M+L+1)$ 
 such that for all $j\in\N$, for all $\abs{2^{-j}\lambda}<\pi$, 
\begin{align*} 
\abs{2\abs{\hat \varphi_h(2^{-j}\lambda)}^2-1}&\leq C_Z\,\abs{2^{-j}\lambda}^2,\\
\abs{2\abs{\hat \varphi_g(2^{-j}\lambda)}^2-1}&\leq C_Z\,\abs{2^{-j}\lambda}^2.
\end{align*}
\end{lemma}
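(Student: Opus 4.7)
The plan is to reduce both inequalities to a single estimate via symmetry, then factorize $2\abs{\hat\varphi_h}^2$ as a product of a ``sinc'' part and an ``analyticity'' part, each of which lies in $[0,1]$ and is close to~$1$ near the origin.

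First, I would exploit the symmetry between $\hat\varphi_h$ and $\hat\varphi_g$. From the explicit formulas~\eqref{eqn:phih}--\eqref{eqn:phig}, one reads off $\hat\varphi_g(\mu)=\hat\varphi_h(\mu)\cdot\prod_{k\geq1}\rme^{-\rmi\,2^{-k}\mu L}=\hat\varphi_h(\mu)\,\rme^{-\rmi\mu L}$, since $\sum_{k\geq1}2^{-k}=1$. Hence $\abs{\hat\varphi_g}=\abs{\hat\varphi_h}$ and the two bounds in the lemma coincide, so only the first must be established. Writing $\mu=2^{-j}\lambda$, I would then set $A(\mu)=(\sin(\mu/2)/(\mu/2))^{2M}$ and $B(\mu)=\prod_{k=1}^\infty\abs{\hat d_L(2^{-k}\mu)}^2$, so that $2\abs{\hat\varphi_h(\mu)}^2=A(\mu)B(\mu)$. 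For $\abs{\mu}\leq\pi$, each factor in $B$ lies in $[0,1]$ (since $\abs{\hat d_L(\nu)}^2=\cos^{4L+2}(\nu/4)+\sin^{4L+2}(\nu/4)\leq\cos^2(\nu/4)+\sin^2(\nu/4)=1$) and similarly $A\in[0,1]$, so $AB\in[0,1]$ and $\abs{1-AB}=1-AB\leq(1-A)+(1-B)$.

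The bound on $1-A$ is routine: combining $0\leq 1-\sin(x)/x\leq x^2/6$ with $1-t^{2M}\leq 2M(1-t)$ for $t\in[0,1]$ gives $1-A\leq 2M(\mu/2)^2/6=M\mu^2/12$. The key intermediate estimate for $B$ is $1-\abs{\hat d_L(\nu)}^2\leq(2L+1)\nu^2/16$ for $\abs{\nu}\leq\pi$. To obtain it, I would parametrize by $s=\sin^2(\nu/4)\in[0,1/2]$ so that $\abs{\hat d_L(\nu)}^2=(1-s)^{2L+1}+s^{2L+1}$, and study $f(s)=1-(1-s)^{2L+1}-s^{2L+1}$. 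Then $f(0)=0$ and $f'(s)=(2L+1)\bigl[(1-s)^{2L}-s^{2L}\bigr]\in[0,2L+1]$ on $[0,1/2]$, so $f(s)\leq(2L+1)s\leq(2L+1)\nu^2/16$. Summation is handled by the elementary inequality $1-\prod_k a_k\leq\sum_k(1-a_k)$ for $a_k\in[0,1]$, which follows from the telescoping identity $1-\prod_{k\leq N}a_k=\sum_{k\leq N}(1-a_k)\prod_{j<k}a_j$; applied with $a_k=\abs{\hat d_L(2^{-k}\mu)}^2$, it yields $1-B\leq(2L+1)\mu^2/16\cdot\sum_{k\geq1}4^{-k}=(2L+1)\mu^2/48$.

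Combining the two estimates: $1-AB\leq M\mu^2/12+(2L+1)\mu^2/48=(4M+2L+1)\mu^2/48\leq(M+L+1)\mu^2$, which gives $C_Z=M+L+1$ as claimed. The only delicate step is the bound on $1-\abs{\hat d_L(\nu)}^2$: a naive expansion $(a+b)^{2L+1}-a^{2L+1}-b^{2L+1}=\sum_{k=1}^{2L}\binom{2L+1}{k}a^k b^{2L+1-k}$ with $a=\cos^2(\nu/4)$, $b=\sin^2(\nu/4)$ seemingly produces a combinatorially large constant, and one has to recognize (as in the argument above) that the factor $(1-s)^{2L}$ in $f'(s)$ exactly absorbs this growth, leaving a constant linear in $L$. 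This is the main obstacle; everything else is calibration of constants.
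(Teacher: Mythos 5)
Your proof is correct and follows essentially the same route as the paper's: the same factorization of $2\abs{\hat\varphi_h}^2$ into the $\bigl(\sin(\mu/2)/(\mu/2)\bigr)^{2M}$ part and the product of $\abs{\hat d_L}^2$ factors, the same triangle-inequality splitting, the same per-factor quadratic bounds, and the same geometric summation over scales. Your two refinements — the explicit identity $\hat\varphi_g(\lambda)=\hat\varphi_h(\lambda)\,\rme^{-\rmi\lambda L}$ (where the paper merely says the $\hat\varphi_g$ case is ``similar'') and the monotone-derivative argument giving $1-\abs{\hat d_L(\nu)}^2\leq(2L+1)\sin^2(\nu/4)$ in one stroke — are pleasant but do not change the structure of the argument.
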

\begin{proof}

The proof is only derived for $\hat\varphi_h(\cdot)$. It is similar for $\hat\varphi_g(\cdot)$. Recall that 
\[
2^{1/2}\hat \varphi_h(2^{-j}\lambda)
=\Bigl(\frac{\sin(\lambda/2^{j+1})}{\lambda/2^{j+1}}\Bigr)^M\prod_{\ell=j+1}^\infty \overline{\hat d_L(2^{-\ell}\lambda)}.
\]

The triangular inequality implies that 
\begin{align*} \abs{2\abs{\hat \varphi_h(2^{-j}\lambda)}^2-1}&\leq \abs{\Bigl(\frac{\sin(\lambda/2^{j+1})}{\lambda/2^{j+1}}\Bigr)^{2M}-1}\prod_{\ell=j+1}^\infty \abs{\hat d_L(2^{-\ell}\lambda)}^2 + \abs{\prod_{\ell=j+1}^\infty \abs{\hat d_L(2^{-\ell}\lambda)}^2-1}\\
&\leq 2M\,\abs{\abs{\frac{\sin(\lambda/2^{j+1})}{\lambda/2^{j+1}}}-1} + \sum_{\ell=j+1}^\infty\abs{\abs{\hat d_L(2^{-\ell}\lambda)}^2-1},
\end{align*}
where we have used the equality $(x^K-1)=(x-1)\sum_{m=0}^{K-1}x^m$ for all $x\in\R$, $K\in\N$, and the fact that for all $x\in\R\setminus\{0\}$, $\abs{\sin(x)/x}\leq 1$ and $\abs{\hat d_L(x)}\leq 1$.

Taylor inequality states that for all $x\in\R\setminus\{0\}$, $\abs{\sin(x)-x}\leq \abs{x}^3/6$. Additionally, for all $\abs{2^{-\ell}\lambda/4}\leq 1$, 
\begin{align*}
\abs{\abs{\hat d_L(2^{-\ell}\lambda)}^2-1}&=\abs{\cos(2^{-\ell}\lambda/4)^{2(2L+1)}-1+\sin(2^{-\ell}\lambda/4)^{2(2L+1)}}\\
&\leq (2L+1)\sin\bigl(2^{-\ell}\lambda/4\bigr)^{2}+ \bigl(2^{-\ell}\lambda/4\bigr)^{2(2L+1)}\\
&\leq (2L+1)\bigl(2^{-\ell}\lambda/4\bigr)^{2}+ \bigl(2^{-\ell}\lambda/4\bigr)^{2}.
\end{align*}
We get
\begin{align*}
\abs{2\abs{\hat \varphi_h(2^{-j}\lambda)}^2-1} &\leq \frac{M}{3}\,\abs{2^{-j-1}\lambda}^2+ (2L+1)\abs{2^{-j}\lambda}^2\sum_{\ell=3}^\infty 2^{-2\ell}+ \abs{2^{-j}\lambda}^{2}\sum_{\ell=3}^\infty 2^{-2\ell}\\
&\leq \bigl(\frac{M}{6}+\frac{L}{2}+\frac{1}{2})\,\abs{2^{-j}\lambda}^2.
\end{align*}
Hence, \[ \abs{2\abs{\hat \varphi_h(2^{-j}\lambda)}^2-1}\leq C_Z\,\abs{2^{-j}\lambda}^2,
\]
with $C_Z=(M+L+1)/2$.
\end{proof}

We deduce that
\begin{align*}
\abs{\abs{\hat\varphi(2^{-j}\lambda)\overline{\hat\psi(\lambda)}}^2-\abs{\hat\psi(\lambda)}^2}& \leq C_Z\abs{2^{-j}\lambda}^{2}\abs{\hat\psi(\lambda)}^2.
\end{align*}
Using respectively \ref{ass:moments} and \ref{ass:unif-smooth}, we get
\begin{align*}
\abs{\abs{\hat\varphi(2^{-j}\lambda)\overline{\hat\psi(\lambda)}}^2-\abs{\hat\psi(\lambda)}^2}& 
\leq C_{a2}\,2^{-\gamma_2\,j}\abs{\lambda}^{2M}\,, \text{when }\abs{\lambda}\leq \pi,\\
\abs{\abs{\hat\varphi(2^{-j}\lambda)\overline{\hat\psi(\lambda)}}^2-\abs{\hat\psi(\lambda)}^2}& 
\leq C_{b2}\,2^{-\gamma_2\,j}\abs{\lambda}^{2}(1+\abs{\lambda})^{-2\alpha}\,, \text{when } 1\leq \abs{\lambda}\leq 2^j\pi.
\end{align*}
with $C_{a2}=C_m^2\,C_Z\pi^2$, $C_{b2}=C_s^2\,C_Z$ and $\gamma_2=2$.

We obtain inequality \eqref{ass::infini} and \eqref{ass::infini2} with $C_a=\max\{C_{a1}, C_{a2}, C_{b1}, C_{b2}\}$, $\gamma=\min\{\gamma_1,\gamma_2\}$. Observe that for all $M\geq 2$ $\gamma_1>2$, and, hence, $\gamma=2$ and we can take $C_{a}=2.5^{2M}\,(M+L+1)$.

\section{Proof of Proposition \ref{prop:support} and of Lemma \ref{lem:nj}}

Proposition 7.2 of \cite{Mallat99} states that $\varphi_g(.)$ and $\varphi_h(.)$ have the same support of the conjugate mirror filters associated to $h^{(L)}(.)$ and $g^{(L)}(.)$. When $\hat q_{L,M}(.)=1$, \eqref{eqn:h} and \eqref{eqn:g} yield
\[ \hat h^{(L)}(\lambda) =2^{-M+1/2}\Bigl(\sum_{k=0}^M(\rme^{-\rmi\lambda})^k\Bigr)\hat d_L(\lambda)\text{\quad and \quad} \hat g^{(L)}(\lambda) =2^{-M+1/2}\Bigl(\sum_{k=0}^M(\rme^{-\rmi\lambda})^k\Bigr)\,\rme^{\rmi\lambda}\overline{\hat d_L(\lambda)},\]
for all $\lambda\in\R$.
Moreover \eqref{eqn:d} writes as \[\hat d_L(\lambda)=1+\sum_{\ell=1}^L d(\ell) (\rme^{-\rmi\lambda})^{\ell}, \text{ with   } d(\ell)=(-1)^n \binom{L}{\ell}\prod_{k=0}^{\ell-1} \frac{1/2-L+k}{3/2+k}, \quad \ell=1,\dots,L\] (see {\it e.g.} Section 2.2 of \cite{common_factor}).
We deduce that $\hat h^{(L)}{(.)}$ and $\hat g^{(L)}{(.)}$ are polynomials of $\rme^{\rmi\lambda}$ with coefficients varying on. They are, hence, associated with conjugate mirror filters defined on $\{0,\ldots, M+L\}$ and on $\{-1-L,\ldots, M-1\}$. Hence, the support of $\varphi_g(.)$ and $\varphi_h(.)$ are respectively $[0,M+L]$ and on $[-1-L,\ldots, M-1]$. Consequently, the support of $\varphi(.)$ is $[-1-L,\ldots M+L]$, and has length $M+2L+1$.

Using again Proposition 7.2 of \cite{Mallat99}, we deduce that the supports of $\psi_g(.)$ and $\psi_h(.)$ are respectively $[-(M+L-1)/2,(M+L+1)/2]$ and $[-(M+L)/2,(M+L)/2]$, and thus that the support of $\psi(.)$ has length $M+L+1/2$. Proposition \ref{prop:support} follows.

{\it Remark.} For CFW-PR(M,L) and CFW-C(M,L) filters, the presence of the filter $\hat q_{L,M}(.)$ in $\hat h^{(L)}{(.)}$ and $\hat g^{(L)}(.)$ changes the supports. No theoretical statement provides the degree of $\hat q_{L,M}(.)$, but in practice, $\hat q_{L,M}(.)$ is a polynomial of $\rme^{-\rmi\lambda}$ of degree $M+L-1$. Then, the resulting supports of $\varphi_h(.)$ and $\varphi_g(.)$ are $[0, 2M+2L-1]$ and $[-1-L, 2M+L-1]$. Similarly, we deduce that the supports of $\psi_h(.)$ and $\psi_g(.)$ are $[-M-L, M+L]$ and $[-M-L+1/2,M+L+1/2]$.

Consider now the wavelet coefficients $\{\hat \bW_{j,k},\, j\geq 0,\,k\in\Z\}$, as defined in Lemma \ref{lem:nj}. Denote $[t_\varphi,T_\varphi]$ the support of $\varphi(.)$, and suppose $T_\varphi-t_\varphi\geq 1$. The functions $\hat \bX(t)=\sum_{k=1}^N \bX(k)\varphi(t-k)$ and $\tilde \bX(t)=\sum_{k\in\Z} \bX(k)\varphi(t-k)$ coincide for $t\in\bigl[T_\varphi, N+t_\varphi+1\bigr]$. Recall that $n_j$ is the number of coefficients at a scale $j\geq 0$ such that $\int_\R\hat \bX(t)\psi_{j,k}(t)\rmd t=\int_\R\tilde \bX(t)\psi_{j,k}(t)\rmd t$. Easy calculation then yields $n_j=\max\left\{0,\; \lfloor 2^{-j}(N-L_\phi+1)-L_\psi+1\rfloor\right\},$ where $L_\phi, L_\psi$ are the respective length of the supports of functions $\phi(.)$ and $\psi(.)$. As a consequence, $n_j=\max\left\{0,\; \lfloor 2^{-j}(N-2L-M-1)-L-M-1/2\rfloor\right\}$, for all $M\geq 1$, $L\geq 0$.

Now, suppose that $N^{-1}L\to 0$ and that $2^j\leq NL^{-1}$. Then, for all $j$ such that $2^jN^{-1}L\to 0$ when $N$ goes to infinity, $n_j\,2^{j}N^{-1} \to 1$ when $N$ goes to infinity. This concludes Lemma \ref{lem:nj}.

\section{Asymptotic behavior of the wavelet covariance}

This section deals with the proofs of the results of Section~\ref{sec:cov}. 
We will prove stronger results than Proposition~\ref{prop:covY2} which are stated below. To better {highlight} the role of the number of vanishing moments $M$ and the regularity $\alpha$, we keep these parameters, even if, for CFW-PR(M,L) and CFW-C(M,L) filters, we have $\alpha=M$ by Proposition~\ref{prop:filters}. Hence, we formulate here the assumption on the parameters on both $\alpha$ and $M$,
\begin{enumerate}[label=(C-\alph*),start=2]
\item\label{ass:parameters2} 
$-\alpha/2+\beta/2+1/2< d_\ell< M/2\quad\text{for all}\quad \ell=1,\dots,\dim$, $M\geq 2$ and $0<\beta<2$.
\end{enumerate}
Assumption \ref{ass:parameters2} is equivalent to assumption \ref{ass:parameters}.

Proposition \ref{prop:covY2} follows from the following proposition.
\begin{proposition}
\label{prop:covY1bis}
Let $\bX$ be a $\dim$-multivariate long range dependent process with long
memory parameters $d_1,\dots,d_{\dim}$ with generalized
spectral density $\bff(\cdot)$ satisfying~\ref{ass:zero-frequency} with short-range behavior \ref{ass:beta}.
Consider $\{W_{j,k}(\ell), \ell=1,\dots,p, j\geq 0, k\in\Z\}$ the wavelet coefficients of $\bX$ obtained with CFW-C(M,L) filters, $M, L\geq 2$.
Then we have, for all $j\geq0$, $k\in\Z$, 
\begin{multline*}
\Bigl\lvert 2^{-j\,(d_\ell+d_m)}\cov(W_{j,k}(\ell),W_{j,k}(m))
 -
\Omega_{\ell,m}\int_{-\infty}^\infty 
\abs{\lambda}^{-d_\ell-d_m}
\rme^{\sign(\lambda)\,\phi_{\ell,m}}\,\abs{\hat\psi(\lambda)}^2\;\rmd \lambda\Bigr\rvert\\
\leq C_1'\max\{2^{-j\beta},\,L\,2^{-2\,j}\}.
\end{multline*}
where 
 $C_1'$ is a constant only depending on $M$ and $C_f,\beta, \norm{\bOmega}, \{d_\ell,\ell=1,\dots,\dim\}$.
\end{proposition}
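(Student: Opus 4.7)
The plan is to extend the proof of Proposition~3 of \cite{AchardGannaz} (the real-wavelet analogue, with bound $O(2^{-j\beta})$) to the complex CFW-C$(M,L)$ setting, where the extra $L\,2^{-2j}$ term encodes the cost of the quasi-analytic filter approximation. Starting from the standard spectral representation $\cov(W_{j,k}(\ell),W_{j,k}(m)) = \int_{-\pi}^\pi f_{\ell,m}(\lambda)|\hat\tau_j(\lambda)|^2\,\rmd\lambda$ (valid even for non-stationary $\bX$ thanks to the vanishing moments of $\psi$), I would substitute the form of $f_{\ell,m}$ given by \ref{ass:zero-frequency}, make the change of variable $u=2^j\lambda$, and compare the resulting rescaled integral on $[-2^j\pi,2^j\pi]$ with the target $\Omega_{\ell,m}\int_{\R}\rme^{\rmi\sign(u)\phi_{\ell,m}}|u|^{-\delta}|\hat\psi(u)|^2\,\rmd u$, where $\delta=d_\ell+d_m$. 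The error is then decomposed into four pieces, treated separately.

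The first piece is the short-range correction from replacing $f^S_{\ell,m}(2^{-j}u)$ by $1$: using \ref{ass:beta} together with the pointwise control \eqref{eqn:moments_smooth_tauj} on $2^{-j/2}\hat\tau_j$, condition \ref{ass:parameters2} ensures integrability on $\R$ and produces an $O(2^{-j\beta})$ bound. The second piece is the wavelet-filter approximation error on the core range $|u|\leq\pi$: Proposition~\ref{prop:ass_infini} provides the pointwise bound $C_a\,2^{-2j}|u|^{2M}$ with $C_a=2(M+L+1)=O(L)$, and integrating against $|u|^{-\delta}$ on $[-\pi,\pi]$ yields an $O(L\,2^{-2j})$ contribution (using $\delta<2M+1$). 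The fourth piece, the tail $|u|>2^j\pi$, is controlled by \ref{ass:unif-smooth} and gives $O(2^{-j(\delta+2\alpha-1)})\subseteq O(2^{-2j})$, since $\alpha\geq M\geq 2$ and $\delta>-1$.

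The third and most delicate piece is the outer range $\pi<|u|\leq 2^j\pi$, where Proposition~\ref{prop:ass_infini} no longer applies. I would handle it by invoking the aliasing representation $2^{-j/2}\hat\tau_j(2^{-j}u) = \sum_{k\in\Z}\hat\varphi(2^{-j}u+2k\pi)\,\overline{\hat\psi(u+2^{j+1}k\pi)}$, and analyzing the dominant $k=0$ contribution separately from the $k\neq 0$ and cross terms. The $k=0$ part yields the difference $(|\hat\varphi(2^{-j}u)|^2-1)|\hat\psi(u)|^2$; the factor $|\hat\varphi(2^{-j}u)|^2-1$ is controlled via Lemma~\ref{lem:phi} applied to $|\hat\varphi_h|^2$ and $|\hat\varphi_g|^2$, while the decay of $|\hat\psi(u)|^2$ from \ref{ass:unif-smooth} keeps the integrated contribution within $O(L\,2^{-2j})$. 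The $k\neq 0$ terms are damped by \ref{ass:phi_unif}, which gives $|\hat\varphi(2^{-j}u+2k\pi)|\leq C_\varphi|2^{-j}u|^M$ (a factor of order $2^{-jM}$), combined with the rapid decay $|\hat\psi(u+2^{j+1}k\pi)|\leq C_s(2^j|k|\pi)^{-\alpha}$ and the summability of $\sum_{k\neq 0}|k|^{-\alpha}$ (since $\alpha=M\geq 2>1$); together these sum to a negligible $O(2^{-j(2M+2\alpha-1)})$ contribution.

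Assembling the four pieces yields $O(\max(2^{-j\beta},\,L\,2^{-2j}))$, matching the claim. The main obstacle will be the outer-range argument: Proposition~\ref{prop:ass_infini} covers only $|u|\leq\pi$, so the aliasing structure of $\hat\tau_j$ must be unpacked in order to extend the pointwise approximation up to $|u|\leq 2^j\pi$, and the interplay between the vanishing moments $M$, the smoothness index $\alpha$, and the analyticity parameter $L$ must be tracked carefully so that the outer-range contribution is indeed dominated by $L\,2^{-2j}$ rather than by a weaker rate.
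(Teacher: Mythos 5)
Your overall architecture coincides with the paper's: the covariance is written as $\int_{-\pi}^{\pi}\bff(\lambda)\abs{\hat\tau_j(\lambda)}^2\,\rmd\lambda$, rescaled by $u=2^j\lambda$, and split into a short-range correction (handled exactly as you describe, via \ref{ass:beta} together with the moment and decay bounds on $2^{-j/2}\hat\tau_j(2^{-j}\cdot)$ from Proposition~\ref{prop:ineq}), a core range where Proposition~\ref{prop:ass_infini} produces the $L\,2^{-2j}$ contribution, and a tail $\abs{u}>2^j\pi$ controlled by \ref{ass:unif-smooth}. The one place you depart from the paper is the outer range (the paper cuts at $1$ rather than $\pi$, which is immaterial): there the paper does \emph{not} unpack the aliasing sum at all, but simply bounds $\bigl\lvert\abs{2^{-j/2}\hat\tau_j(2^{-j}u)}^2-\abs{\hat\psi(u)}^2\bigr\rvert$ by the sum of the two terms, each decaying like $\abs{u}^{-2\alpha}$ by \eqref{eqn:smoothness_tauj} and \ref{ass:unif-smooth}, and then integrates using \ref{ass:parameters2}. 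Your third piece is therefore a genuinely different, and considerably heavier, route for that single sub-step.

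Within that third piece there is a concrete gap. Lemma~\ref{lem:phi} controls $\abs{2\abs{\hat\varphi_h}^2-1}$ and $\abs{2\abs{\hat\varphi_g}^2-1}$ separately, but since $\hat\varphi_h$ and $\hat\varphi_g$ are themselves complex-valued, $\abs{\hat\varphi}^2=\abs{\hat\varphi_h+\rmi\hat\varphi_g}^2=\abs{\hat\varphi_h}^2+\abs{\hat\varphi_g}^2-2\,\mathrm{Im}\bigl(\hat\varphi_g\overline{\hat\varphi_h}\bigr)$, and the cross term is not covered by the lemma. From the explicit expressions \eqref{eqn:phih}--\eqref{eqn:phig} one has $\hat\varphi_g(\lambda)=\hat\varphi_h(\lambda)\,\rme^{-\rmi\lambda L}$, so the cross term equals $2\abs{\hat\varphi_h(\lambda)}^2\sin(\lambda L)$, which at $\lambda=2^{-j}u$ is of order $\min(1,L\,2^{-j}\abs{u})$ --- first order in $\lambda$, not second --- and hence is not absorbed into an $O(L\,2^{-2j})$ budget by the argument you sketch. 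You would need to treat this oscillatory cross term explicitly, or avoid the aliasing route on this range altogether as the paper does. Your $k\neq 0$ estimate via \ref{ass:phi_unif} and the decay of $\hat\psi$ is fine, and your remaining three pieces match the paper's proof.
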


\subsection{Proof of Proposition~\ref{prop:covY1bis}}

Let $j\geq0$, $k\in\Z$. The quantity $\cov(\bW_{j,k})$ can be decomposed as
\begin{align*}
{\cov(\bW_{j,k})=\bA_j^{(+)}+  \bA_j^{(-)} \;,}
\text{~~with~~}
\bA_j^{(+)}&=\int_{0}^{\pi\,2^j}
\bff(2^{-j}\lambda)\;
2^{-j}\abs{\hat\tau_j(2^{-j}\lambda)}^2\;\rmd
\lambda\,,\\
\bA_j^{(-)}&= \int_{-\pi\,2^j}^{0} \bff(2^{-j}\lambda)\;
2^{-j}\abs{\hat\tau_j(2^{-j}\lambda)}^2\;\rmd
\lambda\,.
\end{align*}
Also recall that $\bGamma_j(\bd)$ is the diagonal matrix with diagonal entries $2^{-j\,d_1},\dots,2^{-j\,d_{\dim}}$.

We now sum up the main points for the convergence of $\cov(\bW_{j,k})$.
\begin{enumerate}
\item Behavior of $\bA_j^{(+)}$.\\
We introduce
\begin{align*}
\bB_j^{(+)}&=\int_{0}^{\pi\,2^j}
\bGamma_j(\bd)^{-1}\bLambda(\lambda) \bTheta \bLambda(\lambda)\bGamma_j(\bd)^{-1} \;
2^{-j}\abs{\hat\tau_j(2^{-j}\lambda)}^2\;\rmd
\lambda,\\
\bI_j^{(+)\mbox{inf}} &= \int_{0}^{\pi\,2^j}
\bGamma_j(\bd)^{-1}\bLambda(\lambda) \bTheta \bLambda(\lambda)\bGamma_j(\bd)^{-1} \;
\abs{\hat\psi(\lambda)}^2\;\rmd
\lambda,\\
\bI_j^{(+)\mbox{sup}} & = \int_{\pi\,2^j}^\infty
\bGamma_j(\bd)^{-1}\bLambda(\lambda) \bTheta \bLambda(\lambda)\bGamma_j(\bd)^{-1} \;
\abs{\hat\psi(\lambda)}^2\;\rmd
\lambda,\\
\bI^{(+)}&= \bI_j^{(+)\mbox{inf}}+\bI_j^{(+)\mbox{sup}}=\int_0^\infty 
\bGamma_j(\bd)^{-1}\bLambda(\lambda) \bTheta \bLambda(\lambda)\bGamma_j(\bd)^{-1}\;\abs{\hat\psi(\lambda)}^2 \;\rmd
\lambda.
\end{align*}
The steps of the convergence are:
\begin{enumerate}
\item $2^{-j(d_\ell+d_m-\beta)} \abs{\bA_j^{(+)}-\bB_j^{(+)}}$ is bounded using the regularity of the spectral density $\bff^S(\cdot)$ at the origin, that is,~\ref{ass:beta}, together with inequality \eqref{eqn:moments_tauj}.
\item $2^{-j(d_\ell+d_m-\beta)} \abs{\bB_j^{(+)}-\bI_j^{(+)\mbox{inf}}}$ is bounded using the convergence of the filter $\tau_j$ to $\hat\psi(.)$, through Proposition~\ref{prop:ass_infini}. We shall need inequality \eqref{eqn:moments_tauj} to control the integral around zero and inequality~\eqref{eqn:smoothness_tauj} to control the upper part.
\item $2^{-j(d_\ell+d_m-\beta)} \abs{\bI_j^{(+)\mbox{sup}}}$ is bounded using the regularity of $\hat\psi(.)$, that is, using~\ref{ass:unif-smooth}.
\end{enumerate}
All together, we shall obtain the convergence of $\bA_j^{(+)}$ to  $\bI^{(+)}$, which gives the property.
\item Behavior of $\bA_j^{(-)}$\\
We can apply the same arguments as for $\bA_j^{(+)}$ and obtain the convergence of $\bA_j^{(-)}$  to  $\bI^{(-)}$, with
\[
\bI^{(-)} = \int_{-\infty}^0
\bGamma_j(\bd)^{-1}\bLambda(\lambda) \bTheta \bLambda(\lambda)\bGamma_j(\bd)^{-1}\;\abs{\hat\psi(\lambda)}^2\;\rmd
\lambda.
\]
\end{enumerate}

In the following, $(\ell,m)\in\{1,\dots,\dim\}^2$ will denote two arbitrary indexes.

\subsubsection[Spectral approximation]{Spectral approximation, $\abs{\bA_j^{(+)}-\bB_j^{(+)}}$}
\label{sec:proof_spectral}

First notice that $\bGamma_j(\bd)^{-1}\bLambda(2^j\lambda) = \bLambda(\lambda) $. Hence,
\begin{align*}
\abs{\bA_{j}^{(+)}-\bB_j^{(+)}}&\leq \int_{0}^{\pi}
\abs{ \bff(\lambda)\;
- \bLambda(\lambda) \bTheta \bLambda(\lambda)}\abs{\hat\tau_j(\lambda)}^2\;\rmd
\lambda\\
&\leq \int_{0}^{\pi}
\abs{\bLambda(\lambda) \bTheta \bLambda(\lambda)}\circ\abs{\bff^S(\lambda)-1}\abs{\hat\tau_j(\lambda)}^2\;\rmd
\lambda.
\end{align*}
Property~\ref{ass:beta} gives
\[ \bigl(\abs{\bLambda(\lambda) \bTheta \bLambda(\lambda)}\circ\abs{\bff^S(\lambda)-1}\bigr)_{\ell,m} \leq C_f\,\norm{\bOmega}\, \abs{\lambda}^{-d_\ell-d_m+\beta}. \]
With a change of variable,
\[\abs{\bA_{j}^{(+)}-\bB_j^{(+)}}_{\ell,m}\leq C_f\,\norm{\bOmega}\,2^{j(d_\ell+d_m-\beta)} \,\int_0^{2^j\pi} {\abs{\lambda}^{-d_\ell-d_m+\beta}}\abs{2^{-j/2}\hat\tau_j(2^{-j}\lambda)}^2\,\rmd\lambda.
\]
We split the integral in two parts. First, with \eqref{eqn:moments_tauj},
\[\int_0^1 {\abs{\lambda}^{-d_\ell-d_m+\beta}}\abs{2^{-j/2}\hat\tau_j(2^{-j}\lambda)}^2\,\rmd\lambda \leq C_{m\tau}^2\,\int_0^1 {\abs{\lambda}^{-d_\ell-d_m+\beta+2M}}\,\rmd\lambda.
\]
As the parameters satisfy \ref{ass:parameters2}, the integral is bounded by a constant depending on $(d_\ell,d_m,\beta,M)$. The bound is independent on $L$ since the constant $C_{m\tau}$ does not depend on $L$.

Next, using the regularity given by \eqref{eqn:smoothness_tauj},
\[\int_1^{2^j\pi} {\abs{\lambda}^{-d_\ell-d_m+\beta}}\abs{2^{-j/2}\hat\tau_j(2^{-j}\lambda)}^2\,\rmd\lambda \leq C_{s\tau}^2\,\int_1^\infty \frac{\abs{\lambda}^{-d_\ell-d_m+\beta}}{(1+\abs{\lambda})^{2\alpha}}\,\rmd\lambda.
\]
Property \ref{ass:parameters2} ensures that the right hand side is bounded by a constant depending on $d_\ell, d_m, \beta$, $\alpha$, $M$, and not depending on $L$.

\subsubsection[Asymptotic of the filters]{Asymptotic of the filters, $\abs{\bB_j^{(+)}-\bI_j^{(+)\mbox{inf}}}$}
This step uses the convergence of the filter $\hat\tau_j$ to $\hat\psi(.)$, through Proposition~\ref{prop:ass_infini}.
First,
\[
\bGamma_j(\bd)\,2^{j\beta}\abs{\bB_j^{(+)}-\bI_j^{(+)\mbox{inf}}}\bGamma_j(\bd)\;\leq\;2^{j\beta}\int_{0}^{2^j\pi}
\bLambda(\lambda) \bTheta \bLambda(\lambda) \;
\abs{\abs{2^{-j/2}\hat\tau_j(2^{-j}\lambda)}^2-
\abs{\hat\psi(\lambda)}^2}\;\rmd
\lambda.
\]

Using \eqref{ass::infini2}, for $1\leq|\lambda|\leq 2^j\pi$,
$
\abs{\abs{2^{-j/2}\hat\tau_j(2^{-j}\lambda)}^2-
\abs{\hat\psi(\lambda)}^2}\leq C_{a}\,2^{-\gamma\,j}\abs{\lambda}^{2-\alpha}.
$
Thus,
\begin{multline*}
{\left(\int_{1}^{2^j\pi}
\bLambda(\lambda) \bTheta \bLambda(\lambda) \;
2^{j\beta}\abs{\abs{2^{-j/2}\hat\tau_j(2^{-j}\lambda)}^2-
\abs{\hat\psi(\lambda)}^2}\;\rmd
\lambda\right)_{(\ell,m)} }\\
\leq \norm{\bOmega}\,C_a\,2^{j(\beta-\gamma)}\int_{1}^{2^j\pi}\abs{\lambda}^{-d_\ell-d_m+2-\alpha}\rmd\lambda.
\end{multline*}
Depending on $-d_\ell-d_m+2-\alpha+1$ being negative, equal to zero or positive, the integral on the right-hand side is bounded up to a constant by 1, by $j$ or by $2^{j(-d_\ell-d_m+2-\alpha+1)}$. In the two first cases the right hand side goes to zero when $j$ goes to infinity since $\gamma>\beta$. In the last case, using \ref{ass:parameters2}, we obtain the bound $2^{j(\beta-d_\ell-d_m-\alpha+1)}$ which goes to zero when $j$ goes to infinity.

When $M\geq 2$, $\gamma=2$, $-d_\ell-d_m+2-2\alpha+1<0$ and $C_a=2\,(M+L+1)$. With a fixed $M$, this term is, hence, bounded up to a constant by  $L\,2^{j(\beta-2)}$.

It remains to consider the integral on $(0,1)$.
Property \eqref{ass::infini} states that
\begin{multline*}
{\left(\int_{0}^{1}
\bLambda(\lambda) \bTheta \bLambda(\lambda) \;
2^{j\beta}\abs{\abs{2^{-j/2}\hat\tau_j(2^{-j}\lambda)}^2-
\abs{\hat\psi(\lambda)}^2}\;\rmd
\lambda\right)_{(\ell,m)} }\\
\leq \norm{\bOmega}\,C_a\,2^{j(\beta-\gamma)}\int_0^{1}\abs{\lambda}^{-d_\ell-d_m+2M}\rmd\lambda.
\end{multline*}
The right-hand side tends to 0 when $j$ goes to infinity since $\beta<\gamma$ and $\max\{d_\ell, \ell=1,\dots,\dim\}< M+1/2$.

When $M\geq 2$, $\gamma=2$ and $C_a=2\,(M+L+1)$. With a fixed $M$, this term is, hence, bounded up to a constant by  $\max\{1,L\,2^{j(\beta-\gamma)}\}=\max\{1,L\,2^{j(\beta-2)}\}$.

\label{sec:proof_Ca}

\subsubsection[Regularity of the filters]{Regularity of the filters, $\abs{\bI_j^{(+)\mbox{sup}}}$}
\label{sec:Isup}
This step uses the regularity of $\hat\psi(.)$. Indeed, property~\ref{ass:unif-smooth} entails that
\begin{align*}
\abs{\bI_{j,\ell m}^{(+)\mbox{sup}}} & \leq \norm{\bOmega}2^{j(d_\ell+d_m)}\int_{2^j\pi}^\infty \abs{\lambda}^{-d_\ell-d_m}\;
\abs{\hat\psi(\lambda)}^2\;\rmd \lambda\\
& \leq C_s^2\norm{\bOmega}2^{j(d_\ell+d_m)}\int_{2^j\pi}^\infty \frac{\abs{\lambda}^{-d_\ell-d_m}}{(1+\abs{\lambda})^{2\alpha}}\;\rmd \lambda\\
& \leq C_s^2\norm{\bOmega}\,\pi^{-\beta}\,2^{j(d_\ell+d_m-\beta )}\int_{2^j\pi}^\infty \abs{\lambda}^{-d_\ell-d_m+\beta-2\alpha}\;\rmd \lambda,
\end{align*}
where last inequality results from the fact that when $|\lambda|\geq 2^j\pi$, then $1\leq |\lambda|^\beta 2^{-j\beta}\pi^\beta$.
Property \ref{ass:parameters2} thus implies that 
$2^{-j(d_\ell+d_m-\beta)} \abs{\bI_{j,\ell m}^{(+)\mbox{sup}}}$ is bounded by a constant depending of $d_\ell,d_m,\beta,\norm{\bOmega}$, and $M$.

\subsection{Proof of Proposition~\ref{prop:covY3}}

Recall that \begin{align*}
\hat\psi(\lambda)&=\hat \psi_h(\lambda)+\rmi\,\hat \psi_g(\lambda)=\left(1-\rme^{\rmi \eta_L(\lambda)}\right)\hat \psi_h(\lambda)\;,\\
\text{with~~  }  \alpha_L(\lambda)&= 2(-1)^L\,\atan\left(\tan^{2L+1}(\lambda/4)\right)\;,\\
  \eta_L(\lambda)&=-a_L(\lambda/2+\pi)+\sum_{j=1}^{\infty} a_L(2^{-j-1}\lambda)\;.
\end{align*}
Theorem~\ref{th:main-result-analyticity} states that, for
  all $\lambda\in\R$,
  \[
  \left|\hat\psi(\lambda) -
    2\1_{\R_+}(\lambda)\,\hat\psi_h(\lambda)\right|= U_L(\lambda)
  \left|\hat\psi_h(\lambda)\right|\;,
  \]
  with
\begin{equation}\label{eqn:Ul}
U_L(\lambda) \leq
    2\sqrt{2}
\left(\log_2\left(\frac{\max(4\pi,\abs{\lambda})}{2\pi}\right)+2\right)\,\left(1-\frac{\delta(\lambda,4\pi\Z)}{\max(4\pi,\abs{\lambda})}\right)^{2L+1}
 \;.
\end{equation}
We deduce from Theorem~\ref{th:main-result-analyticity} the following results, which gives inequalities in a form that can be more useful in future developments.
\begin{corollary}
\label{cor::quasi-analytic}
For all $\hat q_{L,M}(.)$ real polynomial of $(e^{-i\lambda})$, for
  all $\abs{\lambda}\leq 2\pi$,
\begin{equation*}
 \abs{\abs{\hat \psi(\lambda)}^2 -
    4\,\1_{\R_+}(\lambda)\,\abs{\hat \psi_h(\lambda)}^2}\leq  12\sqrt{2}\left(1-\frac{\abs{\lambda}}{2\pi}\right)^{2L+1} 
  \abs{\hat \psi_h(\lambda)}^2\;.
\end{equation*}
For $2\pi\leq\abs{\lambda}\leq 4\pi$,
\begin{equation*}
 \abs{\abs{\hat \psi(\lambda)}^2 -
    4\,\1_{\R_+}(\lambda)\,\abs{\hat \psi_h(\lambda)}^2}\leq  12\sqrt{2}\left(\frac{\abs{\lambda}}{4\pi}\right)^{2L+1} 
  \abs{\hat \psi_h(\lambda)}^2\;.
\end{equation*}
\end{corollary}
The proof is straightforward and it thus omitted.

Let us introduce also
\begin{align*}
\bI_j^{\mbox{inf}} & = \int_{-\pi\,2^j}^{\pi\,2^j}
\bGamma_j(\bd)^{-1}\bLambda(\lambda) \bTheta \bLambda(\lambda)\bGamma_j(\bd)^{-1} \;
\abs{\hat\psi(\lambda)}^2\;\rmd
\lambda, \\
\bI_j^{\mbox{inf},\mbox{analytic}} &= \int_{0}^{\pi\,2^j}
\bGamma_j(\bd)^{-1}\bLambda(\lambda) \bTheta \bLambda(\lambda)\bGamma_j(\bd)^{-1} \;
4\,\abs{\hat\psi_h(\lambda)}^2\;\rmd
\lambda,\\
\bI_j^{\mbox{sup},\mbox{analytic}} & = \int_{\pi\,2^j}^\infty
\bGamma_j(\bd)^{-1}\bLambda(\lambda) \bTheta \bLambda(\lambda)\bGamma_j(\bd)^{-1} \;
4\,\abs{\hat\psi_h(\lambda)}^2\;\rmd
\lambda.
\end{align*}

Following the proof of Proposition~\ref{prop:covY1bis}, the steps of the proof are the following: 
\begin{enumerate}
\item $2^{-j(d_\ell+d_m-\beta)} \abs{\cov(\bW_{j,k})-\bI_j^{\mbox{inf}}}$ is bounded, up to a constant, by $L2^{-2j}$. This result was already obtained in the proof of Proposition~\ref{prop:covY1bis}.
\item $2^{-j(d_\ell+d_m-\beta)} \abs{\bI_j^{\mbox{inf}}-\bI_j^{\mbox{inf},\mbox{analytic}}}$ is bounded up to a constant by $\max\{1,L^{-(2M-d_\ell-d_m+1)}2^{j\beta}\}$, using the quasi-analyticity property, stated in Corollary \ref{cor::quasi-analytic}.
\item $2^{-j(d_\ell+d_m-\beta)} \abs{\bI_j^{\mbox{sup},\mbox{analytic}}}$ is bounded. This result is straightforward with step 1.(c) in the proof of Proposition~\ref{prop:covY1bis}, since {$\lvert\hat\psi_h(\lambda) \rvert\leq \lvert\hat\psi(\lambda) \rvert$}.
\end{enumerate}
Hence, it only remains to prove step 2.
That is, we want to establish that the quantity \begin{equation*}
\int_{-2^j\pi}^{2^j\pi} \abs{\lambda}^{-d_\ell-d_m}\abs{\abs{\hat \psi(\lambda)}^2 -
    4\,\1_{\R_+}(\lambda)\,\abs{\hat \psi_h(\lambda)}^2}\rmd\lambda
    \end{equation*} 
is bounded up to a constant by $\max\{2^{-j\beta},L^{-M-1}\}$.

To this aim, we will use \eqref{eqn:Ul} (and Corollary~\ref{cor::quasi-analytic}) and the inequality
\begin{equation}\label{eqn:ineqPsi}
\lvert\hat\psi_h(\lambda)\rvert\leq 5^M\,|\sin(\lambda/4)|^M\,(1+|\lambda|)^{-M},
\end{equation}
for all $\lambda\in\R\setminus\{0\}$ (see Section \ref{sec:unif-smooth}).

We decompose the integral on the sub-intervals $(-2\pi,2\pi)$, $(-4\pi,-2\pi)$, $(2\pi,4\pi)$, $(-2^j\pi,-4\pi)$, $(-2^j\pi,-4\pi)$ and $(4\pi,2^j\pi)$.

\subsubsection*{On $(-2\pi,2\pi)$.}
With assumption~\ref{ass:moments}, Corollary~\ref{cor::quasi-analytic} leads to
\begin{align*}
\lefteqn{\int_{-2\pi}^{2\pi} \abs{\lambda}^{-d_\ell-d_m}\abs{\abs{\hat \psi(\lambda)}^2 -
    4\,\1_{\R_+}(\lambda)\,\abs{\hat \psi_h(\lambda)}^2}\rmd\lambda}\\
    &\leq 24\sqrt{2}\,\int_0^{2\pi} \left(1-\frac{\abs{\lambda}}{4\pi}\right)^{2L+1}{\abs{\lambda}^{-d_\ell-d_m+2M}}\rmd\lambda,\\
    &\leq 24\sqrt{2}\,{(4\pi)^{-d_\ell-d_m+2M+1}\,}B(2M-d_\ell-d_m+1,2L+2),
\end{align*}
where $B(.,.)$ is the Beta function. Using Stirling's approximation, for fixed $M$, $d_\ell$, $d_m$, when $L$ goes to infinity the right-hand side is equivalent to \[ 36\sqrt{2}\; \Gamma(2M-d_\ell-d_m+1)(4\pi)^{-d_\ell-d_m+2M+1}\,\left(2L+2\right)^{-(2M-d_\ell-d_m+1)}.\]
This bound is negligible with respect to $L^{-M-1}$, since $d_\ell+d_m< M$ under \ref{ass:parameters2}.

\subsubsection*{On $(-4\pi,-2\pi)$ and $(2\pi,4\pi)$.}

Observe that \eqref{eqn:ineqPsi} yields $\lvert \hat\psi_h(\lambda)|\leq 5^M (4\pi-\lambda)^M$.
With Corollary~\ref{cor::quasi-analytic}, we get
\begin{align*}
\lefteqn{\int_{2\pi}^{4\pi} \abs{\lambda}^{-d_\ell-d_m}\abs{\abs{\hat \psi(\lambda)}^2 -
    4\,\1_{\R_+}(\lambda)\,\abs{\hat \psi_h(\lambda)}^2}\rmd\lambda}\\
    &\leq 12\sqrt{2}\,5^{2M}\,(4\pi)\int_{2\pi}^{4\pi} \left(1-\frac{\lambda}{4\pi}\right)^{2L+1}(4\pi-\lambda)^{2M}\rmd\lambda,\\
    &\leq 12\sqrt{2}\,5^{2M}\,(4\pi)^{2M+2}\,B(2M+1,2L-1).
\end{align*}
Using Stirling's approximation, for fixed $M$ and $L$ going to infinity, the right-hand side is equivalent, up to a multiplicative constant to $L^{-2M-1}$. It is therefore lower than $L^{-M-1}$ for sufficiently large $L$.

A similar result is obtained on $(-4\pi,-2\pi)$.

\subsubsection*{On $(-2^j\pi,-4\pi)$ and $(4\pi,2^j\pi)$.}

Let us first consider the integral on an interval $(4k\pi, 4k\pi+2\pi)$, with $k\in\N$, $k\geq 2$.
Inequality \eqref{eqn:ineqPsi} implies that $\lvert \hat\psi_h(\lambda)|\leq 5^M \Bigl\lvert\frac{4k\pi-\lambda}{\lambda}\Bigr\rvert^{M/2}|\lambda|^{-M/2}$.
With \eqref{eqn:Ul}, we get
\begin{align*}
\lefteqn{\int_{4k\pi}^{4k\pi+2\pi} \abs{\lambda}^{-d_\ell-d_m}\abs{\abs{\hat \psi(\lambda)}^2 -
    4\,\1_{\R_+}(\lambda)\,\abs{\hat \psi_h(\lambda)}^2}\rmd\lambda}\\
    &\leq 12\sqrt{2}\,5^{2M}\,\log_2(k)\int_{4k\pi}^{4k\pi+2\pi} \abs{\lambda}^{-d_\ell-d_m} \left(\frac{4k\pi}{\lambda}\right)^{2L+1}\,\left(1-\frac{4k\pi}{\lambda}\right)^{M}|\lambda|^{-M}\rmd\lambda.
  \end{align*}
Since $-M-d_\ell-d_m<0$, $|\lambda|^{-M-d_\ell-d_m}<(4k\pi)^{-M-d_\ell-d_m}$ for all $\lambda > 4k\pi$. With the change of variable $\mu=\frac{4k\pi}{\lambda}$, we obtain the upper bound
\begin{align}
\nonumber & 12\sqrt{2}\,5^{2M}\,\log_2(k)(4k\pi)^{-M-d_\ell-d_m+1}\int_{0}^{1} \left(\mu\right)^{2L-1}\,\left(1-\mu\right)^{M}|\rmd\mu,\\        
\label{eqn:bound1}    &\leq 12\sqrt{2}\,5^{2M}\,(4k\pi)^{-M-d_\ell-d_m+1}\,\log_2(k)\,B(M+1,2L),
\end{align}
with $B(.,.)$ the Beta function.

Second, we consider the integral on an interval $(4k\pi-2\pi, 4k\pi)$, with $k\in\N$, $k\geq 2$.
Inequality \eqref{eqn:ineqPsi} implies that $\lvert \hat\psi_h(\lambda)|\leq 5^M \Bigl\lvert\frac{4k\pi-\lambda}{\lambda}\Bigr\rvert^{M/2}|\lambda|^{-M/2}$.
With \eqref{eqn:Ul}, we get
\begin{align*}
\lefteqn{\int_{4k\pi-2\pi}^{4k\pi} \abs{\lambda}^{-d_\ell-d_m}\abs{\abs{\hat \psi(\lambda)}^2 -
    4\,\1_{\R_+}(\lambda)\,\abs{\hat \psi_h(\lambda)}^2}\rmd\lambda}\\
    &\leq 12\sqrt{2}\,5^{2M}\,\log_2(k)\int_{4k\pi-2\pi}^{4k\pi} \abs{\lambda}^{-d_\ell-d_m} \left(1-\frac{4k\pi-\lambda}{\lambda}\right)^{2L+1}\,\left(\frac{4k\pi-\lambda}{\lambda}\right)^{M}|\lambda|^{-M}\rmd\lambda\\
        &\leq 12\sqrt{2}\,5^{2M}\,\log_2(k)(4k\pi-2\pi)^{-M-d_\ell-d_m}(4k\pi)\int_{0}^{1} \left(1-\mu\right)^{2L-1}\,\left(\mu\right)^{M}\rmd\lambda,
  \end{align*}
where we have done the change of variable $\mu=\frac{4k\pi}{\lambda}-1$.
We obtain the bound
\begin{equation}\label{eqn:bound2}
12\sqrt{2}\,5^{2M}\,(4(k-1)\pi)^{-M-d_\ell-d_m+1}\,\log_2(k)\,B(M+1,2L),
\end{equation}
with $B(.,.)$ the Beta function.

Next, \begin{multline}\label{eqn:bound_somme}
{\int_{4\pi}^{2^j\pi}\abs{\lambda}^{-d_\ell-d_m}\abs{\abs{\hat \psi(\lambda)}^2 -
    4\,\1_{\R_+}(\lambda)\,\abs{\hat \psi_h(\lambda)}^2}\rmd\lambda}\\
       \leq \sum_{k=2}^{2^{j-2}} \int_{4k\pi-2\pi}^{4k\pi} \abs{\lambda}^{-d_\ell-d_m}\abs{\abs{\hat \psi(\lambda)}^2 -
    4\,\1_{\R_+}(\lambda)\,\abs{\hat \psi_h(\lambda)}^2}\rmd\lambda \\
    +\sum_{k=1}^{2^{j-2}} \int_{4k\pi}^{4k\pi+2\pi} \abs{\lambda}^{-d_\ell-d_m}\abs{\abs{\hat \psi(\lambda)}^2 -
    4\,\1_{\R_+}(\lambda)\,\abs{\hat \psi_h(\lambda)}^2}\rmd\lambda.
\end{multline}
The two terms on the right hand side are bounded by \eqref{eqn:bound1} and \eqref{eqn:bound2}. Since $M\geq 2$ and $d_\ell+d_m>-1$, $-M-d_\ell-d_m<-1$. Hence, $\sum_{k=1}^{2^j} \log_2(k)\,k^{-M-d_\ell-d_m}$ can be bounded by a constant, not depending on $j$. We deduce that, for fixed $M$, up to a constant, the right-hand side of \eqref{eqn:bound_somme} is bounded by $B(2M-2,2L)$. Stirling's approximation states that $B(M+1,2L)$ is equivalent to $L^{-M-1}$, for fixed $M$ and $L$ going to infinity.

The bound on $(-2^j\pi,4\pi)$ is similar.

\section{Asymptotic behavior of the estimators}

We detail some points that are changed with the complex wavelet setting in the proofs of consistency and of asymptotic normality, with respect to the real wavelets setting.

First, recall that, for all $j\geq 0$, $n_j$ denotes the number of non zero wavelet coefficients $\{\bW_{j,k},\, k\in\Z\}$. Under the assumptions that $2^{-j_0}L$ is bounded and that $2^{-j_0}N\to \infty$, the sequence $n_j$ is equivalent to $2^{-j}N$ when $j$ goes to infinity. These assumptions are made in both Theorem \ref{prop:convergence} and Theorem \ref{thm:normality}. Hence, $n_j$ behaves similarly to in \cite{AchardGannaz} and in \cite{normality}.

\subsection{Proof of Theorem \ref{prop:convergence} }

For complex wavelets the approximation of the wavelet covariance does not admit the same bound  as for real wavelets. Hence, the study of the term \[
S^{(1)}_{\ell,m}(\mu)=\sum_{j=j_0}^{j_1} n_j \mu_j\left[\frac{\cov(W_{j,k}(\ell),W_{j,k}(m)) }{2^{j(d_\ell^0+d_m^0)}}-G_{\ell,m}^0 \right],
\]
defined on page 499 of \cite{AchardGannaz}, is modified. Consequently, Proposition 8 and Proposition 9 of \cite{AchardGannaz} do not hold anymore. They are replaced respectively by Proposition \ref{prop:Smu} and Proposition \ref{prop:Smu2} below.

Let us take $\ell$ and $m$ in $1,\dots,p$, and define, for any sequence $\mu=\{\mu_{j},\,j\geq 0\}$, 
\begin{equation*}
S_{\ell,m}(\mu)=\sum_{j,k} \mu_{j} \left(\frac{W_{j,k}(\ell)W_{j,k}(m)}{ 2^{j(d_\ell^0+d_m^0)}}-G_{\ell,m}^0\right)=\sum_{j=j_0}^{j_1} \mu_{j} \left(\frac{I_{\ell,m}(j)}{2^{j(d_\ell^0+d_m^0)}}-n_j G_{\ell,m}^0\right).
\end{equation*}

$S_{\ell,m}(\mu)$ is decomposed in two terms $S^{(0)}_{\ell,m}(\mu)$ and $S^{(1)}_{\ell,m}(\mu)$,
\begin{align*}
S^{(0)}_{\ell,m}(\mu)&= \sum_{j=j_0}^{j_1} \mu_j \frac{1}{2^{j(d_\ell^0+d_m^0)}}\sum_k\left({W_{j,k}(\ell)W_{j,k}(m)}-\cov(W_{j,k}(\ell),W_{j,k}(m)) \right),\\\
S^{(1)}_{\ell,m}(\mu)&=\sum_{j=j_0}^{j_1} n_j \mu_j\left[\frac{\cov(W_{j,k}(\ell),W_{j,k}(m)) }{2^{j(d_\ell^0+d_m^0)}}-G_{\ell,m}^0 \right].
\end{align*}

\begin{proposition}
\label{prop:Smu}

Assume that the sequences $\mu$ belong to the set $\{ \{\mu_j\}_{j \geq 0},\; |\mu_j|\leq\frac{1}{n_j}\}$. Suppose that \ref{ass:parameters} holds.  Under condition (C), $\sup_{\{\mu,\; |\mu_j|\leq \frac{1}{n_j}\}}S_{\ell,m}(\mu)$ is uniformly bounded by $2^{-j_0\beta}+L2^{-2j_0}+j_1 L^{-M-1}+N^{-1/2} 2^{j_1/2}$ up to a multiplicative constant, that is, 
  $$\sup_{\mu\in\{ (\mu_j)_{j \geq 0},\; |\mu_j|\leq\frac{1}{n_j}\}} \{ S_{\ell,m}(\mu) \} = \BigO_\P (2^{-j_0\beta}+L2^{-2j_0}+j_1 L^{-M-1}+N^{-1/2} 2^{j_1/2}).$$
\end{proposition}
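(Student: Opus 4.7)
The plan is to analyze separately the two summands in the decomposition $S_{\ell,m}(\mu)=S^{(0)}_{\ell,m}(\mu)+S^{(1)}_{\ell,m}(\mu)$ already introduced above. Here $S^{(1)}$ is a deterministic bias measuring the gap between the normalized wavelet covariance and its limit $G_{\ell,m}^0$, while $S^{(0)}$ is a centered, linear-in-$\mu$ functional of the scalogram fluctuations. Since the admissible set $\{|\mu_j|\leq 1/n_j\}$ is a product of intervals and $S_{\ell,m}(\mu)$ is linear in $\mu$, the worst $\mu$ is attained coordinatewise, so I will bound $\sup_\mu|S^{(1)}|$ and $\sup_\mu|S^{(0)}|$ separately and sum. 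Throughout I use $n_j\asymp N\,2^{-j}$, which holds under the standing assumption $L/N\to 0$ carried by the hypotheses of Theorem~\ref{prop:convergence}.

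For the bias term, I plan to invoke Proposition~\ref{prop:covY3bis}, which yields
\[
\Bigl|2^{-j(d_\ell^0+d_m^0)}\cov(W_{j,k}(\ell),W_{j,k}(m))-G_{\ell,m}^0\Bigr|\leq C\bigl(2^{-j\beta}+L\,2^{-2j}+L^{-1}\bigr),
\]
where the third summand follows by majorizing $L^{-(2M-d_\ell^0-d_m^0+1)}$ by $L^{-1}$ using \ref{ass:parameters_strong}. Since $n_j|\mu_j|\leq 1$, summing over $j\in\{j_0,\dots,j_1\}$ and combining the two geometric-series estimates $\sum 2^{-j\beta}=\BigO(2^{-j_0\beta})$ and $\sum L\,2^{-2j}=\BigO(L\,2^{-2j_0})$ with the trivial $(j_1-j_0+1)L^{-1}\leq j_1 L^{-1}$ yields the uniform bound $\sup_\mu|S^{(1)}_{\ell,m}|=\BigO(2^{-j_0\beta}+L\,2^{-2j_0}+j_1 L^{-1})$.

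For the stochastic term, I plan a simple second-moment argument. Writing $Z_j=I_{\ell,m}(j)-\E[I_{\ell,m}(j)]$, the triangle inequality gives $\sup_\mu|S^{(0)}_{\ell,m}(\mu)|\leq\sum_{j=j_0}^{j_1}n_j^{-1}\,2^{-j(d_\ell^0+d_m^0)}|Z_j|$. Condition~\ref{ass:conditionC} provides $\E|Z_j|\leq\sqrt{\var(I_{\ell,m}(j))}\leq C\,\sqrt{n_j}\,2^{j(d_\ell^0+d_m^0)}$, so that $\E[\sup_\mu|S^{(0)}_{\ell,m}|]\leq C\sum_{j=j_0}^{j_1}n_j^{-1/2}$. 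Using $n_j\asymp N\,2^{-j}$, the geometric sum is dominated by its largest term and bounded by $C\,N^{-1/2}\,2^{j_1/2}$; Markov's inequality then yields $\sup_\mu|S^{(0)}_{\ell,m}|=\BigO_\P(N^{-1/2}\,2^{j_1/2})$. Adding the two contributions gives the announced rate.

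The main obstacle is the new, non-geometric bias term $L^{-1}$ produced by the quasi-analyticity gap in Proposition~\ref{prop:covY3bis}, which has no counterpart in the real-wavelet analysis of \cite{AchardGannaz}. This contribution does not decay with $j$ and cannot be absorbed into a geometric-tail estimate, so it must be summed trivially over the $(j_1-j_0+1)$ active scales, producing the extra $j_1 L^{-1}$ summand in the final rate. This is precisely what later forces Theorem~\ref{thm:cvgce} to require $L\to\infty$ fast enough (namely $\log(N)^3 L^{-1}\to 0$), so that this additional contribution remains negligible at the desired convergence rate.
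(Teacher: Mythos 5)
Your proof is correct and follows essentially the same route as the paper: the same decomposition $S_{\ell,m}=S^{(0)}_{\ell,m}+S^{(1)}_{\ell,m}$, the same use of Proposition~\ref{prop:covY3bis} together with \ref{ass:parameters_strong} to majorize $L^{-(2M-d_\ell-d_m+1)}$ by $L^{-1}$ and sum the bias over scales, yielding $2^{-j_0\beta}+L2^{-2j_0}+j_1L^{-1}$. The only difference is that you spell out the second-moment/Markov argument for $S^{(0)}$ under \ref{ass:conditionC}, whereas the paper simply notes that this term is unchanged and defers to Proposition~8 of \cite{AchardGannaz}; your explicit version is consistent with that reference and with the $N^{-1/2}2^{j_1/2}$ term in the statement.
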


\begin{proof}
From Proposition~\ref{prop:covY3}, there exists $C>0$ such that
\begin{equation}
|S^{(1)}_{\ell,m}(\mu)|\leq  C \sum_{j=j_0}^{j_1} \bigl(2^{-\beta j} + L2^{-j}+L^{-M-1}\bigr)n_j |\mu_j|.\label{eqn:S1}
\end{equation} 
Under the assumption $|\mu_j|\leq \frac{1}{n_j}$, we have the inequality $|S^{(1)}_{\ell,m}(\mu)|\leq  C \sum_{j=j_0}^{j_1} \bigl(2^{-\beta j} + L2^{-j}+L^{-M-1}\bigr)$. The right-hand bound is equivalent to $2^{-j_0\beta}+L2^{-2j_0}+j_1 L^{-M-1}$ up to a constant.

The term $S^{(0)}$ is unchanged and the proposition follows as in the proof of Proposition 8 of \cite{AchardGannaz}.
\end{proof}

\begin{proposition}
\label{prop:Smu2}
Let $0<j_0 \leq j_1 \leq j_N$. Suppose that \ref{ass:parameters} holds. 
Assume that the sequences $\mu$ belong to the set 
$$\mathcal{S}(q,\gamma,c)=\{ \{\mu_j\}_{j \geq 0}, |\mu_j|\leq\frac c n |j-j_0+1|^q 2^{(j-j_0)\gamma},~ \forall j=j_0,\ldots j_1\}$$ with $0\leq\gamma<1$. 
Under condition (C), $\sup_{\mu\in\mathcal{S}(q,\gamma,c)}S_{\ell,m}(\mu)$ is uniformly bounded by $2^{-j_0\beta}+L2^{-2j_0}+\log(N) L^{-M-1}+H_\gamma(N^{-1/2} 2^{j_0/2})$ up to a constant,
  $$\sup_{\mu\in\mathcal{S}(q,\gamma,c)}\{ S_{\ell,m}(\mu) \} =\BigO_\P (2^{-j_0\beta}+L2^{-2j_0}+j_1 L^{-M-1}+H_\gamma(N^{-1/2} 2^{j_0/2}))$$
  with $H_\gamma(u)=\begin{cases} u & \text{~if~~} 0\leq\gamma<1/2,\\
   \log(1+u^{-2})^{q+1}\,u & \text{~if~~} \gamma=1/2,\\
   \log(1+u^{-2})^q\, u^{2(1-\gamma)} & \text{~if~~} 1/2<\gamma<1.
  \end{cases}$
\end{proposition}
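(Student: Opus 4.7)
\textbf{Proof plan for Proposition \ref{prop:Smu2}.}

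The plan is to reuse the decomposition $S_{\ell,m}(\mu)=S^{(0)}_{\ell,m}(\mu)+S^{(1)}_{\ell,m}(\mu)$ already exploited for Proposition \ref{prop:Smu}, so that the bias term is handled via Proposition \ref{prop:covY3bis} and the stochastic term is handled via Condition \ref{ass:conditionC}. The only thing that changes compared with the proof of Proposition \ref{prop:Smu} is the family of admissible sequences $\mu$; this forces a more careful evaluation of the geometric sums and splits the final estimate into three regimes according to the value of $\gamma$.

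For the bias term $S^{(1)}_{\ell,m}(\mu)$, Proposition \ref{prop:covY3bis} combined with \ref{ass:parameters_strong} yields
\[
|S^{(1)}_{\ell,m}(\mu)|\leq C\sum_{j=j_0}^{j_1}\bigl(2^{-\beta j}+L\,2^{-2j}+L^{-1}\bigr)\,n_j|\mu_j|.
\]
Using $n_j\asymp 2^{-j}N$, $n\asymp 2^{-j_0}N$, and the constraint $|\mu_j|\leq (c/n)(j-j_0+1)^{q}2^{(j-j_0)\gamma}$, I get $n_j|\mu_j|\lesssim (j-j_0+1)^{q}2^{(j-j_0)(\gamma-1)}$. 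Since $\gamma<1$, the weights $2^{(j-j_0)(\gamma-1)}(j-j_0+1)^q$ are exponentially summable. Summing each of the three contributions then gives the deterministic bound $2^{-j_0\beta}+L\,2^{-2j_0}+j_1 L^{-1}$, where the factor $j_1$ in front of $L^{-1}$ is retained for uniformity with Proposition \ref{prop:Smu}.

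For the stochastic term $S^{(0)}_{\ell,m}(\mu)=\sum_j \mu_j Y_j$ with $Y_j=\sum_k\bigl(W_{j,k}(\ell)W_{j,k}(m)-\mathrm{cov}(\cdot)\bigr)/2^{j(d_\ell^0+d_m^0)}$, I would exploit linearity to compute the supremum exactly: $\sup_{\mu\in\mathcal{S}(q,\gamma,c)}|S^{(0)}(\mu)|=(c/n)\sum_j (j-j_0+1)^q 2^{(j-j_0)\gamma}|Y_j|$. Under \ref{ass:conditionC} one has $\mathbb{E}|Y_j|\leq\sqrt{\mathrm{Var}(Y_j)}\leq C\sqrt{n_j}$, hence
\[
\mathbb{E}\Bigl[\sup_{\mu\in\mathcal{S}(q,\gamma,c)}|S^{(0)}(\mu)|\Bigr]\leq \frac{C}{\sqrt{n}}\sum_{j=j_0}^{j_1}(j-j_0+1)^q\,2^{(j-j_0)(\gamma-1/2)}.
\]
Markov's inequality then converts this into an $\BigO_\P$ statement.

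The main technical step — and the part I expect to be the most delicate — is evaluating the above geometric-polynomial sum in the three regimes $\gamma<1/2$, $\gamma=1/2$, $\gamma>1/2$ and matching the result with $H_\gamma(N^{-1/2}2^{j_0/2})$. When $\gamma<1/2$ the sum is bounded by a constant, yielding $\BigO_\P(n^{-1/2})=\BigO_\P(N^{-1/2}2^{j_0/2})$. When $\gamma=1/2$ the sum grows like $(j_1-j_0+1)^{q+1}$, and using $j_1\leq j_N$ together with $2^{j_1-j_0}\leq C N2^{-j_0}=C\,u^{-2}$ with $u=N^{-1/2}2^{j_0/2}$ gives $(\log(1+u^{-2}))^{q+1} u$. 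When $\gamma>1/2$ the sum is dominated by its last term $(j_1-j_0+1)^q 2^{(j_1-j_0)(\gamma-1/2)}$ and the same substitution $2^{j_1-j_0}\leq C u^{-2}$ produces $(\log(1+u^{-2}))^q u^{2(1-\gamma)}$. Adding the deterministic and stochastic bounds and invoking the triangle inequality completes the proof.
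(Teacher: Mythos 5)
Your proposal is correct and follows essentially the same route as the paper: the same decomposition $S_{\ell,m}=S^{(0)}_{\ell,m}+S^{(1)}_{\ell,m}$, the same bound on the deterministic term via Proposition~\ref{prop:covY3bis} and the geometric summability coming from $\gamma<1$ (your observation that the $L^{-1}$ contribution actually needs no $j_1$ factor is also consistent with the paper). The stochastic term, which the paper delegates to Proposition~9 of \cite{AchardGannaz}, is the standard argument you spell out — sup attained by aligning $\mu_j$ with $Y_j$, an $L^1$ bound via \ref{ass:conditionC} and Cauchy--Schwarz, Markov's inequality, and the three-regime evaluation of $\sum_i (i+1)^q 2^{i(\gamma-1/2)}$ using $2^{j_1-j_0}\lesssim N2^{-j_0}$ — and it correctly reproduces $H_\gamma$.
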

In particular, for any $0\leq\gamma<1$, under the assumptions $2^{-j_0\beta}+N^{-1/2} 2^{j_0/2}\to 0$, and $L2^{-2j_0}+\log(N) L^{-M-1}\to 0$,  we have $\sup_{\mu\in\mathcal{S}(q,\gamma,c)}\{ S_{\ell,m}(\mu) \} =o_\P(1)$.
\begin{proof}

Under the assumptions of the proposition, one deduce from inequality~\eqref{eqn:S1} that,  
\begin{align*}
\sup_{\mu\in\mathcal S(q,\gamma,c)}|S^{(1)}_{\ell,m}(\mu)| & \leq  cC \frac 1 n \sum_{j=j_0}^{j_1} n_j \bigl(2^{-\beta j} + L2^{-j}+L^{-M-1}\bigr)2^{\gamma(j-j_0))}(j-j_0+1)^q\\
&\leq cC\, 2^{-\beta j_0}\sum_{i=0}^{j_1-j_0} 2^{-(1+\beta -\gamma)i}(i+1)^q +cC\, L2^{-j_0} \sum_{i=0}^{j_1-j_0} 2^{-(2 -\gamma)i}(i+1)^q\\
& \qquad +cC \,j_1 L^{-M-1}\sum_{i=0}^{j_1-j_0} 2^{-(1 -\gamma)i}(i+1)^q.
\end{align*}
The sums on the right-hand side of the inequality tend to 0 under the assumptions of the proposition, since  $1-\gamma>0$.

The term $S^{(0)}$ is unchanged and the proposition follows as in the proof of Proposition 9 of \cite{AchardGannaz}.
\end{proof}

The rest of the proof is very similar to the real case and it is omitted. Remark that a key of the proof is Oppenheim's inequality, which holds for complex matrices, see \cite{MatrixBook}.

\subsection{Proof of Theorem \ref{thm:normality} }

\paragraph{Expressions of the asymptotic variances.~\\ }

For $u\geq 0$, $(\delta_1,\delta_2)\in(-\alpha,M)^2$, define $\tilde I_u(\delta_1,\delta_2)$ \begin{equation*}
\tilde I_u(\delta_1,\delta_2)=\frac{2\pi}{K(\delta_1)K(\delta_2)}\int_{-\pi}^\pi\overline{\tilde D_{u,\infty}(\lambda;\delta_1)}\tilde D_{u,\infty}(\lambda;\delta_2)\, \rmd\lambda\,,
\end{equation*} where $D_{u,\infty}(\lambda;\delta)$ is an approximation of the cross-spectral density between wavelet coefficients $\{\bW({j,k}), \,k\in\Z\}$ and  $\{\bW({j+u,2^uk+\tau}),\; \tau=0,\dots, 2^u-1,\, k\in\Z\}$, 
\begin{align*}
 D_{u,\tau}(\lambda;\delta)&= \sum_{t\in\Z}\lvert\lambda+2t\pi\rvert^{-\delta} \overline{\hat\psi(\lambda+2t\pi)}\,2^{u/2}\hat\psi(2^u(\lambda+2t\pi))\,\rme^{-\rmi 2^{u}\tau(\lambda+2t\pi)}\,,\\
\tilde D_{u,\infty}(\lambda;\delta) &= \sum_{\tau=0}^{2^{-u}-1} D_{u,\tau}(\lambda;\delta)\,.
\end{align*}
We introduce 
\begin{align*}
&\mathcal I^d_\Delta(\delta_1,\delta_2)= \frac{2}{\kappa_\Delta}\tilde I_0(\delta_1,\delta_2)\\
\nonumber & \pushright{+\frac{2}{\kappa_\Delta^2}\sum_{u=1}^{\Delta}(2^{u\delta_1}+2^{u\delta_2})\,2^{-u} \frac{2-2^{-\Delta+u}}{2-2^{-\Delta}}((u+\eta_{\Delta-u}-\eta_\Delta)(\eta_{\Delta-u}-\eta_\Delta)+\kappa_{\Delta-u})\,\tilde I_u(\delta_1,\delta_2)}\\
 &\pushright{\text{ if } \Delta<\infty,} \\
&\mathcal I^d_\infty(\delta_1,\delta_2)= \tilde I_0(\delta_1,\delta_2)+ \sum_{u=1}^{\infty}(2^{u\delta_1}+2^{u\delta_2})\,2^{-u}\,\tilde I_u(\delta_1,\delta_2)\;,\quad {\text{if } \Delta=\infty.}
\end{align*}
 Define also
\begin{multline}
\label{eqn:Itilded_mat}
\GIGd(\Delta)=\diag{\vect{\bG^0}}
\bigl(\mathcal I^d_\Delta(d_a^0+d_b^0, d_{a'}^0+d_{b'}^0)_{(a,b),(a',b')\in\{1,\dots,p^2\}}\bigr)\diag{\vect{\mathbf{G}^0}}\,.
\end{multline}
Additionally, let us denote
\begin{align*}
\mathcal I_\Delta^G(\delta_1,\delta_2) & = \tilde I_{0}(\delta_1,\delta_2)+\sum_{u=1}^\Delta (2^{u\delta_1}+2^{u\delta_2})2^{-u}\,\frac{2-2^{-\Delta+u}}{2-2^{-\Delta}}\,\tilde I_{u}(\delta_1,\delta_2)
 &\text{if } \Delta<\infty, \\
\mathcal I_\infty^G(\delta_1,\delta_2) &=\tilde I_{0}(\delta_1,\delta_2)+\sum_{u=1}^\infty (2^{u\delta_1}+2^{u\delta_2})2^{-u}\,\tilde I_{u}(\delta_1,\delta_2) &\text{if } \Delta=\infty. 
\end{align*}
Let us also define
\begin{multline}
\label{eqn:ItildeG_mat}
\GIGG(\Delta)=\diag{\vect{\bG^0}}
\bigl(\mathcal I_\Delta^G(d_a^0+d_b^0, d_{a'}^0+d_{b'}^0)_{(a,b),(a',b')\in\{1,\dots,p^2\}}\bigr)\diag{\vect{\bG^0}}.
\end{multline}

Let us reformulate Theorem \ref{thm:normality} with the exact expression of the asymptotic variance.

\begin{theorem}
\label{thm:normality2}
Suppose that conditions of Theorem~\ref{thm:cvgce} are satisfied and that assumption \ref{ass:linear} hold.
Let $j_0 < j_1 \leq j_N$ with $j_N=\max\{j,n_j\geq 1\}$ such that \[
j_1-j_0\to \Delta\in\{1,\dots,\infty\},\; \log(N)^2(N 2^{-j_0(1+2\beta)}+ N^{-1/2} 2^{j_0/2})\to 0. 
\] Define $n=\sum_{j=j_0}^{j_1} n_j$.

Consider CFW-C(M,L) filters with $M\geq 2$ and $\log(N)^2\,N^{1/2} 2^{-j_0/2}(L2^{-j_0}+ L^{-M-1})\to 0$.

Then, \begin{itemize}
\item 
$\sqrt{n}(\hat\bd -\bd^0)$ converges in distribution to a centered Gaussian distribution with a variance equal to \begin{equation}
\label{eqn:vard}
\mathbf{V}^{(\bdexp)}(\Delta)=\frac{1}{2\,\log(2)^{2}}(\bG^{0-1}\circ \bG^0+\bI_p)^{-1}\, \mathbf{\Upsilon}(\Delta)\, (\bG^{0-1}\circ \bG^0+\bI_p)^{-1},
\end{equation}
where $\bI_p$ is the identity matrix in $\R^{p\times p}$ and with entry $(a,a')$ of $\mathbf{\Upsilon}^{(\Delta)}$, for $(a,a')\in\{1,\dots,p\}^2$, given by
\begin{equation*}
\Upsilon_{a,a'}{(\Delta)}=\sum_{b,b'=1,\dots,p} (G^{0-1})_{a,b}(G^{0-1})_{a',b'} \bigl(\GIGd_{(a,a'),(b,b')}(\Delta)+\GIGd_{(a,b'),(a',b)}(\Delta)\bigr)
\end{equation*}
where quantities $\GIGd(\Delta)$ are defined by \eqref{eqn:Itilded_mat}.

\item $\vect{\sqrt{n}\left(\hat \bG(\hat\bd)-\bG^0\right)}$ converges in distribution to a centered Gaussian distribution with a variance equal to $\mathbf{V}^{\bGexp(\Delta)}$, with
\begin{equation}
\label{eqn:varG}
V_{(a,b),(a',b')}^{(\bGexp)}(\Delta)=\GIGG_{(a,a'),(b,b')}(\Delta)+\GIGG_{(a,b'),(a',b)}(\Delta)
\end{equation}
where quantities $\GIGG(\Delta)$ are defined by \eqref{eqn:ItildeG_mat}.
\end{itemize}

\end{theorem}

\paragraph{Proof.~\\ }

The properties of wavelet filters in the proofs of \cite{normality} are used through Proposition 31 of \cite{normality}. The inequalities (I1) and (I2) in \cite{normality} correspond respectively to \eqref{eqn:moments_smooth_tauj} and \eqref{eqn:tauj} of Proposition \ref{prop:ineq}. Inequality (I3) in \cite{normality} follows with the proof of Proposition 31 of \cite{normality}. The constants in \eqref{eqn:moments_smooth_tauj} and \eqref{eqn:tauj} do not depend of $L$, which allows to use these inequalities as in the proofs of \cite{normality}.

The other property of the wavelets used in the proofs of \cite{normality}  is the convergence of $\lvert\hat\varphi(2^{-j}\lambda)\rvert$ to 1 when $j$ goes to infinity (page 29 of \cite{normality}). When $L2^{-2j}$ goes to zero, Lemma \ref{lem:phi} yields 
\[\lim_{j\to \infty}\abs{\lvert\hat\varphi(2^{-j}\lambda)\rvert^2-1}=0,\]
which is the desired result.

Finally, the approximation of the sample wavelet covariance is changed. It is sufficient, to use results of Proposition \ref{prop:covY3} (instead of Proposition 1 of \cite{normality}), to check that, for all $(\ell,m)\in\{1,\dots,p\}^2$, \[\lim_{j\to \infty}\sqrt{n_{j}}\abs{2^{-j\,(d_\ell+d_m-\beta)}\cov(W_{j,k}(\ell),W_{j,k}(m)) -
G_{\ell,m}}=0.
\]
Hence, based on Proposition \ref{prop:covY3} and Lemma \ref{lem:nj}, when $N^{-1}2^{j_1}L\to 0$,  it is sufficient to have \[ \lim_{j\to \infty} N^{1/2}2^{-j/2}\bigl(L2^{-2j}+L^{-M-1}\bigr)=0 , \]
since when $j$ goes to infinity, $j_0\leq j\leq j_1$, $n_j$ is equivalent to $N2^{-j}$.

The rest of the proof does not present major changes. It is thus omitted.

\bibliographystyle{abbrvnat}
\bibliography{lrd}

\begin{thebibliography}{41}
\providecommand{\natexlab}[1]{#1}
\providecommand{\url}[1]{\texttt{#1}}
\expandafter\ifx\csname urlstyle\endcsname\relax
  \providecommand{\doi}[1]{doi: #1}\else
  \providecommand{\doi}{doi: \begingroup \urlstyle{rm}\Url}\fi

\bibitem[Abry and Veitch(1998)]{AbryVeitch98}
P.~Abry and D.~Veitch.
\newblock Wavelet analysis of long-range-dependent traffic.
\newblock \emph{Information Theory, IEEE Transactions on}, 44\penalty0
  (1):\penalty0 2--15, 1998.

\bibitem[Achard and Gannaz(2015)]{multiwave-package}
S.~Achard and I.~Gannaz.
\newblock \emph{multiwave: Estimation of multivariate long-memory models
  parameters}, 2015.
\newblock R package.

\bibitem[Achard and Gannaz(2016)]{AchardGannaz}
S.~Achard and I.~Gannaz.
\newblock Multivariate wavelet {W}hittle estimation in long-range dependence.
\newblock \emph{Journal of Time Series Analysis}, 37:\penalty0 476--512, 2016.

\bibitem[Achard and Gannaz(2019)]{multiwave}
S.~Achard and I.~Gannaz.
\newblock Wavelet-based and {F}ourier-based multivariate {W}hittle estimation:
  multiwave.
\newblock \emph{Journal of Statistical Software, Articles}, 89\penalty0
  (6):\penalty0 1--31, 2019.

\bibitem[Achard et~al.(2020)Achard, Clausel, Gannaz, and Roueff]{common_factor}
S.~Achard, M.~Clausel, I.~Gannaz, and F.~Roueff.
\newblock New results on approximate {H}ilbert pairs of wavelet filters with
  common factors.
\newblock \emph{Applied and Computational Harmonic Analysis}, 49\penalty0
  (3):\penalty0 1025--1045, 2020.

\bibitem[Amblard et~al.(2013)Amblard, Coeurjolly, Lavancier, and
  Philippe]{mfbm-linear}
P.-O. Amblard, J.-F. Coeurjolly, F.~Lavancier, and A.~Philippe.
\newblock Basic properties of the multivariate fractional brownian motion.
\newblock In \emph{S{\'e}minaires et congr{\`e}s}, volume~28, pages 65--87,
  2013.

\bibitem[Baek et~al.(2020)Baek, Kechagias, and Pipiras]{Baek2020}
C.~Baek, S.~Kechagias, and V.~Pipiras.
\newblock Asymptotics of bivariate local {W}hittle estimators with applications
  to fractal connectivity.
\newblock \emph{Journal of Statistical Planning and Inference}, 205:\penalty0
  245--268, 2020.

\bibitem[Becq et~al.(2020{\natexlab{a}})Becq, Barbier, and
  Achard]{becq_10.1088/1741-2552/ab9fec}
G.~Becq, E.~Barbier, and S.~Achard.
\newblock Brain networks of rats under anesthesia using resting-state fmri:
  comparison with dead rats, random noise and generative models of networks.
\newblock \emph{Journal of Neural Engineering}, 2020{\natexlab{a}}.
\newblock URL \url{http://iopscience.iop.org/10.1088/1741-2552/ab9fec}.

\bibitem[Becq et~al.(2020{\natexlab{b}})Becq, Habet, Collomb, Faucher,
  Delon-Martin, Coizet, Achard, and Barbier]{guillaume2020functional}
G.~Becq, T.~Habet, N.~Collomb, M.~Faucher, C.~Delon-Martin, V.~Coizet,
  S.~Achard, and E.~L. Barbier.
\newblock Functional connectivity is preserved but reorganized across several
  anesthetic regimes.
\newblock \emph{NeuroImage}, 219:\penalty0 116945, 2020{\natexlab{b}}.
\newblock \doi{https://doi.org/10.1016/j.neuroimage.2020.116945}.

\bibitem[Buxton(2013)]{buxton2013physics}
R.~B. Buxton.
\newblock The physics of functional magnetic resonance imaging (fmri).
\newblock \emph{Reports on Progress in Physics}, 76\penalty0 (9):\penalty0
  096601, 2013.

\bibitem[Coeurjolly et~al.(2013)Coeurjolly, Amblard, and Achard]{mFBM13}
J.-F. Coeurjolly, P.-O. Amblard, and S.~Achard.
\newblock Wavelet analysis of the multivariate fractional {B}rownian motion.
\newblock \emph{ESAIM: Probability and Statistics}, 17:\penalty0 592--604,
  2013.

\bibitem[Didier and Pipiras(2011)]{DidierPipiras}
G.~Didier and V.~Pipiras.
\newblock Integral representations and properties of operator fractional
  {B}rownian motions.
\newblock \emph{Bernoulli}, 17\penalty0 (1):\penalty0 1--33, 2011.

\bibitem[Fa\"y et~al.(2008)Fa\"y, Moulines, Roueff, and
  Taqqu]{fay:moulines:roueff:taqqu:2008survey}
G.~Fa\"y, E.~Moulines, F.~Roueff, and M.~S. Taqqu.
\newblock Estimators of long-memory: {F}ourier versus {W}avelets.
\newblock \emph{The Journal of Econometrics}, 2008.
\newblock To appear.

\bibitem[Gannaz(2023)]{normality}
I.~Gannaz.
\newblock Asymptotic normality of wavelet covariances and multivariate wavelet
  {W}hittle estimators.
\newblock \emph{Stochastic Processes and their Applications}, 155:\penalty0
  485--534, 2023.

\bibitem[Gannaz et~al.(2017)Gannaz, Achard, Clausel, and Roueff]{spie_analytic}
I.~Gannaz, S.~Achard, M.~Clausel, and F.~Roueff.
\newblock Analytic wavelets for multivariate time series analysis.
\newblock In \emph{Wavelets and Sparsity XVII}, volume 10394, page 103941X.
  International Society for Optics and Photonics, 2017.

\bibitem[Gen{\c{c}}ay et~al.(2001)Gen{\c{c}}ay, Sel{\c{c}}uk, and
  Whitcher]{Gencay}
R.~Gen{\c{c}}ay, F.~Sel{\c{c}}uk, and B.~J. Whitcher.
\newblock \emph{An introduction to wavelets and other filtering methods in
  finance and economics}.
\newblock Academic Press, 2001.

\bibitem[Geweke and Porter-Hudak(1983)]{Geweke}
J.~Geweke and S.~Porter-Hudak.
\newblock The estimation and application of long memory time series models.
\newblock \emph{Journal of Time Series Analysis}, 4\penalty0 (4):\penalty0
  221--238, 1983.

\bibitem[Giraitis et~al.(1997)Giraitis, Robinson, and
  Samarov]{giraitis:robinson:samarov:1997}
L.~Giraitis, P.~M. Robinson, and A.~Samarov.
\newblock Rate optimal semiparametric estimation of the memory parameter of the
  {G}aussian time series with long range dependence.
\newblock \emph{J. Time Ser. Anal.}, 18:\penalty0 49--61, 1997.

\bibitem[Hjorungnes and Gesbert(2007)]{matrixdiff}
A.~Hjorungnes and D.~Gesbert.
\newblock Complex-valued matrix differentiation: Techniques and key results.
\newblock \emph{IEEE Transactions on Signal Processing}, 55\penalty0
  (6):\penalty0 2740--2746, 2007.

\bibitem[Horn and Johnson(1990)]{MatrixBook}
R.~A. Horn and C.~R. Johnson.
\newblock \emph{Matrix analysis}.
\newblock Cambridge university press, 1990.

\bibitem[Kechagias and Pipiras(2014)]{KechagiasPipiras}
S.~Kechagias and V.~Pipiras.
\newblock Definitions and representations of multivariate long-range dependent
  time series.
\newblock \emph{Journal of Time Series Analysis}, 2014.
\newblock \doi{10.1111/jtsa.12086}.

\bibitem[Kechagias and Pipiras(2020)]{KechagiasPipiras20}
S.~Kechagias and V.~Pipiras.
\newblock Modeling bivariate long-range dependence with general phase.
\newblock \emph{Journal of Time Series Analysis}, 41\penalty0 (2):\penalty0
  268--292, 2020.

\bibitem[K{\"u}nsch(1987)]{kunsch:1987}
H.~K{\"u}nsch.
\newblock Statistical aspects of self-similar processes.
\newblock In \emph{Proceedings of the 1st World Congress of the Bernoulli
  Society, Vol. 1 (Tashkent, 1986)}, pages 67--74, Utrecht, 1987. VNU Sci.
  Press.

\bibitem[Lobato(1997)]{Lobato97}
I.~N. Lobato.
\newblock Consistency of the averaged cross-periodogram in long memory series.
\newblock \emph{Journal of Time Series Analysis}, 18\penalty0 (2):\penalty0
  137--155, 1997.

\bibitem[Lobato(1999)]{Lobato99}
I.~N. Lobato.
\newblock A semiparametric two-step estimator in a multivariate long memory
  model.
\newblock \emph{Journal of Econometrics}, 90\penalty0 (1):\penalty0 129--153,
  1999.

\bibitem[Mallat(1999)]{Mallat99}
S.~Mallat.
\newblock \emph{A wavelet tour on signal processing}.
\newblock Academic Press, 2 edition, 1999.

\bibitem[Moulines et~al.(2007)Moulines, Roueff, and
  Taqqu]{Moulines07SpectralDensity}
E.~Moulines, F.~Roueff, and M.~S. Taqqu.
\newblock On the spectral density of the wavelet coefficients of long-memory
  time series with application to the log-regression estimation of the memory
  parameter.
\newblock \emph{Journal of Time Series Analysis}, 28\penalty0 (2):\penalty0
  155--187, 2007.

\bibitem[Moulines et~al.(2008)Moulines, Roueff, and Taqqu]{Moulines08Whittle}
E.~Moulines, F.~Roueff, and M.~S. Taqqu.
\newblock A wavelet {W}hittle estimator of the memory parameter of a
  nonstationary {G}aussian time series.
\newblock \emph{The Annals of Statistics}, pages 1925--1956, 2008.

\bibitem[Nielsen(2011)]{Nielsen11}
F.~S. Nielsen.
\newblock Local {W}hittle estimation of multi-variate fractionally integrated
  processes.
\newblock \emph{Journal of Time Series Analysis}, 32\penalty0 (3):\penalty0
  317--335, 2011.

\bibitem[Nielsen and Frederiksen(2011)]{nielsen2011fully}
M.~{\O}. Nielsen and P.~Frederiksen.
\newblock Fully modified narrow-band least squares estimation of weak
  fractional cointegration.
\newblock \emph{The Econometrics Journal}, 14\penalty0 (1):\penalty0 77--120,
  2011.

\bibitem[Nielsen and Frederiksen(2005)]{nielsen2005finite}
M.~{\O}. Nielsen and P.~H. Frederiksen.
\newblock Finite sample comparison of parametric, semiparametric, and wavelet
  estimators of fractional integration.
\newblock \emph{Econometric Reviews}, 24\penalty0 (4):\penalty0 405--443, 2005.

\bibitem[Robinson(1994)]{Robinson94a}
P.~M. Robinson.
\newblock Semiparametric analysis of long-memory time series.
\newblock \emph{The Annals of Statistics}, pages 515--539, 1994.

\bibitem[Robinson(1995{\natexlab{a}})]{Robinson95a}
P.~M. Robinson.
\newblock Log-periodogram regression of time series with long range dependence.
\newblock \emph{The Annals of Statistics}, pages 1048--1072,
  1995{\natexlab{a}}.

\bibitem[Robinson(1995{\natexlab{b}})]{Robinson95b}
P.~M. Robinson.
\newblock Gaussian semiparametric estimation of long range dependence.
\newblock \emph{The Annals of Statistics}, 23\penalty0 (5):\penalty0
  1630--1661, 1995{\natexlab{b}}.

\bibitem[Robinson(2008)]{Robinson08}
P.~M. Robinson.
\newblock Multiple local {W}hittle estimation in stationary systems.
\newblock \emph{The Annals of Statistics}, 36\penalty0 (05):\penalty0
  2508--2530, 2008.

\bibitem[Sela and Hurvich(2012)]{SelaHurvich2012}
R.~J. Sela and C.~M. Hurvich.
\newblock The averaged periodogram estimator for a power law in coherency.
\newblock \emph{Journal of Time Series Analysis}, 33\penalty0 (2):\penalty0
  340--363, 2012.

\bibitem[Selesnick(2001)]{Selesnick2001}
I.~W. Selesnick.
\newblock Hilbert transform pairs of wavelet bases.
\newblock \emph{Signal Processing Letters, IEEE}, 8\penalty0 (6):\penalty0
  170--173, 2001.

\bibitem[Selesnick(2002)]{Selesnick-thiran}
I.~W. Selesnick.
\newblock The design of approximate {H}ilbert transform pairs of wavelet bases.
\newblock \emph{Signal Processing, IEEE Transactions on}, 50\penalty0
  (5):\penalty0 1144--1152, 2002.

\bibitem[Shimotsu(2007)]{Shimotsu07}
K.~Shimotsu.
\newblock Gaussian semiparametric estimation of multivariate fractionally
  integrated processes.
\newblock \emph{Journal of Econometrics}, 137\penalty0 (2):\penalty0 277--310,
  2007.

\bibitem[Thiran(1971)]{Thiran}
J.-P. Thiran.
\newblock Recursive digital filters with maximally flat group delay.
\newblock \emph{Circuit Theory, IEEE Transactions on}, 18\penalty0
  (6):\penalty0 659--664, 1971.

\bibitem[Whitcher and Jensen(2000)]{WhitcherJensen00}
B.~Whitcher and M.~J. Jensen.
\newblock Wavelet estimation of a local long memory parameter.
\newblock \emph{Exploration Geophysics}, 31\penalty0 (1/2):\penalty0 94--103,
  2000.

\end{thebibliography}

\end{document}